%% file: double_dinaturality.tex
\documentclass[letterpaper]{article}

\usepackage{import} 

\import{setup/}{setup}

\import{macros/}{macros}

\import{macros/}{local_macros}

\providecommand{\keywords}[1]{\textbf{Keywords: \,} #1}
\providecommand{\msc}[1]{\textbf{Mathematics Subject Classification: \,} #1}

\hypersetup
{
	pdfauthor = {Edward Morehouse} ,
	pdftitle = {Dinaturality for Double Categories} ,
	pdfcreator = {} , 
	pdfproducer = { } , 
}

\title{Dinaturality for Double Categories}

\author
{
	\small
	Edward Morehouse%
}
\affil
{
	\small
}
\date{}

\begin{document}
	\maketitle
	\subimport{content/}{outline}

\end{document}

%% file: setup/setup.tex

\usepackage{iftex} 
\usepackage{xparse} 
\usepackage{xifthen} 

\import{./}{docstyle} 
\import{./}{fontstyle} 
\import{./}{mathstyle} 
\import{./}{theoremstyle} 
\import{./}{codestyle} 
\import{./}{tikzstyle} 
\import{./}{bibstyle} 
\import{./}{hyperstyle} 

%% file: setup/docstyle.tex
\usepackage{newsec} 
\usepackage{parskip} 
\usepackage{verbatim} 
\usepackage{todonotes} 
\usepackage{comment} 
\usepackage{enumerate} 
\usepackage{multicol} 
\usepackage{authblk} 
\usepackage[hmargin={24mm} , vmargin={24mm , 24mm}]{geometry} 

%% file: setup/fontstyle.tex
\ifpdftex
	\relax
\else
	\usepackage{fontspec}
	\defaultfontfeatures{Mapping = tex-text , Ligatures = NoCommon , Scale = MatchUppercase}
	\defaultfontfeatures[\ttfamily]{Scale = MatchLowercase}
\fi

%% file: setup/mathstyle.tex
\usepackage{amssymb}
\usepackage{amsmath}
	\numberwithin{equation}{section}
\usepackage{mathtools}
\usepackage{scalerel} 
\usepackage{stackengine} 
\usepackage{proof}
	\inferLineSkip=3pt 
	\inferLabelSkip=8pt 

\ifpdftex
	\usepackage{latexsym} 
	\usepackage{lmodern} 
	\usepackage{frenchmath} 
	\usepackage{upgreek} 
	\usepackage{mathabx} 
	\usepackage{mathbbol} 
\else
	\usepackage{unicode-math} 
	\unimathsetup
	{
		math-style = french , 
		partial = upright ,
		nabla = upright
	}
	\setmathfont{latinmodern-math.otf} 
\fi

\ifpdftex
	\usepackage{newunicodechar} 
	\newunicodechar{′}{{^{\prime}}}
	\newunicodechar{¨}{\text{\textasciidieresis}}
	\newunicodechar{≠}{\neq}
	\newunicodechar{−}{-}
	\newunicodechar{×}{\times}
	\newunicodechar{∧}{\wedge}
	\newunicodechar{∨}{\vee}
	\newunicodechar{⊕}{\oplus}
	\newunicodechar{⊖}{\ominus}
	\newunicodechar{⊗}{\otimes}
	\newunicodechar{⊙}{\odot}
	\newunicodechar{⊛}{\oasterisk}
	\newunicodechar{⋅}{\cdot}
	\newunicodechar{∘}{\circ}
	\newunicodechar{∙}{\bullet}
	\newunicodechar{∗}{\ast}
	\newunicodechar{°}{{}^\circ}
	\newunicodechar{⋆}{\star}
	\newunicodechar{∞}{\infty}
	\newunicodechar{∅}{\varnothing}
	\newunicodechar{⋯}{\cdots}
	\newunicodechar{…}{\ldots}
	\newunicodechar{→}{\rightarrow}
	\newunicodechar{←}{\leftarrow}
	\newunicodechar{⇸}{\stackinset{c}{-0.125ex}{c}{0.25ex}{$\shortmid$}{$\rightarrow$}}
	\newunicodechar{⇷}{\stackinset{c}{0.125ex}{c}{0.25ex}{$\shortmid$}{$\leftarrow$}}
	\newunicodechar{⤍}{\dashrightarrow}
	\newunicodechar{⤌}{\dashleftarrow}
	\newunicodechar{⇒}{\Rightarrow}
	\newunicodechar{⇐}{\Leftarrow}
	\newunicodechar{↓}{\downarrow}
	\newunicodechar{↑}{\uparrow}
	\newunicodechar{⇑}{\Uparrow}
	\newunicodechar{⇓}{\Downarrow}
	\newunicodechar{↔}{\leftrightarrow}
	\newunicodechar{↦}{\mapsto}
	\newunicodechar{◇}{\Diamond}
	\newunicodechar{□}{\square}
	\newunicodechar{⟨}{\langle}
	\newunicodechar{⟩}{\rangle}
	\newunicodechar{⌜}{\ulcorner}
	\newunicodechar{⌝}{\urcorner}
	\newunicodechar{⌞}{\llcorner}
	\newunicodechar{⌟}{\lrcorner}
	\newunicodechar{∫}{\int}
	\newunicodechar{∏}{\prod}
	\newunicodechar{∐}{\coprod}
	\newunicodechar{∑}{\sum}
	\newunicodechar{≅}{\cong}
	\newunicodechar{≃}{\simeq}
	\newunicodechar{∼}{\sim}
	\newunicodechar{≔}{\coloneqq}
	\newunicodechar{≕}{\eqqcolon}
	\newunicodechar{∈}{\in}
	\newunicodechar{∋}{\ni}
	\newunicodechar{∀}{\forall}
	\newunicodechar{∃}{\exists}
	\newunicodechar{⊤}{\top}
	\newunicodechar{⊥}{\bot}
	\newunicodechar{⊢}{\vdash}
	\newunicodechar{⊣}{\dashv}
	\newunicodechar{∆}{\Delta} 
	\newunicodechar{∇}{\nabla}
	\newunicodechar{∂}{\partial}
	\newunicodechar{∞}{\infty}
	\newunicodechar{α}{\alpha}
	\newunicodechar{β}{\beta}
	\newunicodechar{γ}{\gamma}
	\newunicodechar{δ}{\delta}
	\newunicodechar{ε}{\varepsilon}
	\newunicodechar{η}{\eta}
	\newunicodechar{ι}{\iota}
	\newunicodechar{κ}{\kappa}
	\newunicodechar{λ}{\lambda}
	\newunicodechar{ρ}{\rho}
	\newunicodechar{μ}{\mu}
	\newunicodechar{ν}{\nu}
	\newunicodechar{π}{\pi}
	\newunicodechar{φ}{\varphi}
	\newunicodechar{ψ}{\psi}
	\newunicodechar{θ}{\theta}
	\newunicodechar{σ}{\sigma}
	\newunicodechar{τ}{\tau}
	\newunicodechar{χ}{\chi}
	\newunicodechar{ξ}{\xi}
	\newunicodechar{υ}{\upsilon}
	\newunicodechar{ζ}{\zeta}
	\newunicodechar{ω}{\omega}
	\newunicodechar{Γ}{\Gamma}
	\newunicodechar{Δ}{\Delta}
	\newunicodechar{Λ}{\Lambda}
	\newunicodechar{Φ}{\Phi}
	\newunicodechar{Ψ}{\Psi}
	\newunicodechar{Ω}{\Omega}
\else
	\setmathfont[range = {bb,"02190,"02192,"021F7,"021F8,"0290C,"0290D,"0219C,"0219D,"029C4,"029C5,"025C7}]{STIXTwoMath-Regular.otf}
\fi

%% file: setup/theoremstyle.tex
\usepackage{amsthm}
	\numberwithin{equation}{section}

\newtheoremstyle
	{plainer} 
	{2ex} 
	{2ex} 
	{} 
	{} 
	{\bfseries} 
	{} 
	{\newline} 
	{} 

\theoremstyle{plainer} 
\newtheorem{theorem}{Theorem}
\newtheorem{lemma}[theorem]{Lemma}
\newtheorem{corollary}[theorem]{Corollary}
\newtheorem{proposition}[theorem]{Proposition}

\newtheorem{definition}[theorem]{Definition}

\newtheorem{remark}[theorem]{Remark}

\numberwithin{theorem}{section}
\numberwithin{example}{section}

%% file: setup/codestyle.tex
\usepackage{xcolor} 
\usepackage{listings} 

\lstset
{
	tabsize = 2 ,
	basicstyle = \ttfamily ,
	showstringspaces = false ,
	breaklines=true ,
	breakatwhitespace=true ,
	language = ,
}

\lstset
{
	rangebeginprefix = \{--\ begin:\ ,
	rangebeginsuffix = \ --\},
	rangeendprefix = \{--\ end:\ ,
	rangeendsuffix = \ --\},
	includerangemarker = false
}

%% file: setup/tikzstyle.tex
\usepackage{tikz}

\usetikzlibrary
{
	calc ,
	matrix ,
	arrows ,
	angles ,
	quotes ,
	intersections ,
	positioning ,
	fit ,
	graphs ,
	shapes.geometric ,
	shapes.misc ,
	decorations ,
	decorations.markings ,
	decorations.pathmorphing ,
	decorations.pathreplacing ,
	decorations.text
}

\ifluatex
	\usetikzlibrary{graphdrawing}
	\usegdlibrary{trees , force}
\else
	\relax
\fi

\usepgflibrary
{
	arrows.meta
}

\usepackage{xcolor}

\tikzset
{
	diagram/.style =
	{
		line cap = butt ,
		line join = bevel ,
		arrows = -> ,
		> = angle 60 ,
		auto = left ,
		font = \small ,
		text height = 1.5ex , 
		text depth = 0.25ex , 
		inner sep = 0.25em ,
	} ,
	smaller/.append style =
	{
		font = \small
	} ,
	code node/.append style =
	{
		every node/.append style =
		{
			execute at begin node = \begin{texttt} ,
			execute at end node = \end{texttt}
		}
	} ,
	code diagram/.style =
	{
		diagram ,
		code node
	} ,
	math node/.append style =
	{
		every node/.append style =
		{
			execute at begin node = \begin{math} ,
			execute at end node = \end{math}
		}
	} ,
	math diagram/.style =
	{
		diagram ,
		math node
	} ,
	string diagram/.style =
	{
		math diagram ,
		arrows = -
	} ,
	online/.style =
	{
		shape = rectangle ,
		rounded corners = 2mm ,
		inner sep = 1.5pt ,
		fill = white ,
		opacity = 1.0 ,
		anchor = center	
	} ,
	double arrow/.style =
	{
		double distance between line centers = 2pt ,
		-implies 
	} ,
	equals/.style =
	{
		double distance between line centers = 2pt ,
		-implies , 
		arrows = -
	} ,
	mapto/.append style =
	{
		arrows = |-> ,
	} ,
	paph/.append style =
	{
		decorate ,
		decoration = {snake , segment length = 5mm , amplitude = 0.5mm} ,
	} ,
	includel/.append style =
	{
		arrows = Hooks[right]-> ,
	} ,
	includer/.append style =
	{
		arrows = Hooks[left]-> ,
	} ,
	monomorphism/.append style =
	{
		arrows = >-> ,
	} ,
	epimorphism/.append style =
	{
		arrows = ->> ,
	} ,
	cofibration/.append style =
	{
		arrows = >-> ,
	} ,
	fibration/.append style =
	{
		arrows = ->> ,
	} ,
	equivalence/.append style =
	{
		decoration = {markings , mark = at position 1/2 with {\node[online , transform shape] {∼};}} ,
		postaction = {decorate} ,
	} ,
	mark pos/.store in = \markpos ,
	mark pos = 1/2 ,
	proarrow/.append style =
	{
		postaction = {decorate} ,
		decoration =
		{
			markings ,
			mark = at position \markpos with {\draw [solid , -] (0 , -0.6ex) to (0 , 0.6ex);}
		} ,
	} ,
	proarrows/.append style =
	{
		postaction = {decorate} ,
		decoration =
		{
			markings ,
			mark = at position #1with {\draw [solid , -] (0 , -0.6ex) to (0 , 0.6ex);}
		} ,
	} ,
	heteromorphism/.append style =
	{
		decoration = {markings , mark = at position \markpos with {\draw [white , solid , semithick , -] (-1.0ex , 0) to (1.0ex , 0);}} ,
		postaction = {decorate} ,
	} ,
	arrow/.append style =
	{
		arrows = -> ,
	} ,
	string/.append style =
	{
		arrows = - ,
	} ,
	cofibration string/.append style =
	{
		decoration = {markings , mark = at position 1/4 with {\arrow{>}} , mark = at position 3/4 with {\arrow{>}}} ,
		postaction = {decorate} ,
	} ,
	fibration string/.append style =
	{
		decoration = {markings , mark = at position 4/8 with {\arrow{>}} , mark = at position 5/8 with {\arrow{>}}} ,
		postaction = {decorate} ,
	} ,
	overcross/.append style =
	{
		preaction = {draw = white , - , line width = 4pt , line cap = round}
	} ,
	label/.append style =
	{
		font = \scriptsize
	} ,
	incoming/.append style =
	{
		pos = 0 ,
		anchor = south
	} ,
	outgoing/.append style =
	{
		pos = 1 ,
		anchor = north
	} ,
	outcoming/.append style =
	{
		pos = 0 ,
		anchor = north
	} ,
	ingoing/.append style =
	{
		pos = 1 ,
		anchor = south
	} ,
	fromabove/.append style =
	{
		pos = 0 ,
		anchor = south
	} ,
	tobelow/.append style =
	{
		pos = 1 ,
		anchor = north
	} ,
	frombelow/.append style =
	{
		pos = 0 ,
		anchor = north
	} ,
	toabove/.append style =
	{
		pos = 1 ,
		anchor = south
	} ,
	fromleft/.append style =
	{
		pos = 0 ,
		anchor = east
	} ,
	toright/.append style =
	{
		pos = 1 ,
		anchor = west
	} ,
	fromright/.append style =
	{
		pos = 0 ,
		anchor = west
	} ,
	toleft/.append style =
	{
		pos = 1 ,
		anchor = east
	} ,
	forall/.append style =
	{
		line width = 0.5pt ,
	} ,
	exists/.append style =
	{
		densely dashed
	} ,
	structural/.append style =
	{
		opacity = 2/3 ,
	} ,
	0-cell/.style =
	{
		shape = rectangle ,
		rounded corners = 2mm ,
		inner sep = 2pt
	} ,
	1-cell/.style =
	{
		shape = rectangle ,
		rounded corners = 2mm ,
		inner sep = 1.5pt ,
		fill = white ,
		opacity = 1.0 ,
		anchor = center	
	} ,
	2-cell/.style =
	{
		draw = black!75 ,
		fill = white ,
		opacity = 0.9 ,
		inner sep = 0.75mm ,
		minimum size = 4.0mm
	} ,
	bead/.style =
	{
		2-cell ,
		shape = rectangle ,
		rounded corners = 2.0mm
	} ,
	diamond/.style =
	{
		2-cell ,
		shape = diamond
	} ,
	bra/.style =
	{
		2-cell ,
		shape = isosceles triangle ,
		isosceles triangle apex angle = 70 ,
		shape border rotate = -90 ,
		minimum size = 5mm
	} ,
	ket/.style =
	{
		2-cell ,
		shape = isosceles triangle ,
		isosceles triangle apex angle = 70 ,
		shape border rotate = 90 ,
		minimum size = 6mm
	} ,
	box/.style =
	{
		2-cell ,
		shape = rectangle
	} ,
	dot/.style =
	{
		2-cell ,
		shape = circle ,
		text height = 0ex ,
		text depth = 0ex ,
		inner sep = 0mm ,
		minimum size = 1.5mm
	} ,
	black dot/.style =
	{
		dot ,
		fill = gray
	} ,
	white dot/.style =
	{
		dot ,
		fill = white
	} ,
	gray dot/.style =
	{
		dot ,
		fill = gray!25
	} ,
	sheet/.style =
	{
		draw = black!50 ,
		fill = gray!10 ,
		opacity = 1.0 ,
		fill opacity = 0.5
	} ,
	torn/.append style =
	{
		decoration = {random steps , segment length = 1.25pt , amplitude = 0.75pt}
	} ,
	on sheet/.append style =
	{
		semithick
	} ,
	edge annotation/.append style =
	{
		anchor = west
	} ,
	compositor/.style =
	{
		draw = black!50 ,
		fill = gray!10 ,
		draw opacity = 0.5 ,
		fill opacity = 0.25 ,
		dashed
	} ,
	callout/.append style =
	{
		minimum height = 2mm ,
		minimum width = 4mm ,
		draw ,
		draw opacity = 0.25
	} ,
	callout-pre/.append style =
	{
		callout ,
		inner sep = 1.2mm ,
		solid
	} ,
	callout-post/.append style =
	{
		callout ,
		inner sep = 0.8mm ,
		dashed
	} ,
	pullback/.pic =
	{
		\draw [semithick , arrows = -] (-0.1 , 0.1) -- (0.1 , 0.1) -- (0.1 , -0.1) ;
	} ,
	pushout/.pic =
	{
		\draw [semithick , arrows = -] (-0.1 , 0.1) -- (-0.1 , -0.1) -- (0.1 , -0.1) ;
	} ,
}

\pgfdeclaredecoration{simple line}{start}
{
  \state{start}[width = +0pt,
                next state=step]{
    \pgfpathmoveto{\pgfpoint{0pt}{0pt}}
  }
  \state{step}[auto end on length    = 3pt,
               auto corner on length = 3pt,               
               width=+1pt]
  {
    \pgfpathlineto{\pgfpoint{1pt}{0pt}}
  }
  \state{final}
  {}
}

%% file: setup/bibstyle.tex
\usepackage
[
	backend = biber ,
	style = alphabetic ,
	maxnames = 6 ,
	url = true ,
]{biblatex} 

\ExecuteBibliographyOptions
{
	sorting = nyt ,
	backref = false ,
}
\bibliography{bibliography} 

%% file: setup/hyperstyle.tex
\usepackage{hyperref}
\hypersetup
{
	unicode = true ,  
	bookmarks = true , 
	bookmarksopen = true , 
	bookmarksopenlevel = 2 , 
	breaklinks = true , 
	colorlinks = false , 
	pdfborder = {0 0 0} , 
	pdfborderstyle={/S/U/W 0} , 
}

%% file: macros/macros.tex
\NewDocumentCommand \define {s o m}
{%
	\hypertarget%
	{\IfValueTF{#2}{#2}{#3}}%
	{\IfBooleanTF{#1}{#3}{\emph{#3}}}%
}

\NewDocumentCommand \refer {s o m}
{%
	\hyperlink%
	{\IfValueTF {#2}{#2}{#3}}%
	{\IfBooleanTF{#1}{#3}{#3}}%
}

\NewDocumentCommand \liststartstheorem {} {\leavevmode \vspace{-1.0 \baselineskip}}  

\NewDocumentCommand \ensuretext {m} {\textrm{#1}}


\NewDocumentCommand \bbb {m} {\mathbb{#1}}    
\NewDocumentCommand \scp {m} {\ensuretext{\textsc{#1}}}  


\RenewDocumentCommand \arg {o}
{
	\IfNoValueTF{#1}
	{{−}}
	{\overset{#1}{−}}
}

\NewDocumentCommand \infarg {o}
{
	\IfNoValueTF{#1}
	{{\_}}
	{\overset{#1}{\_}}
}



\NewDocumentCommand \bind {o m m} 
{
	\IfNoValueTF{#1}
	{} {\mathop{{#1}}}
	{#2} \mathrel{.} {#3}
}

\NewDocumentCommand \unaryminus {} {\scalebox{0.5}[1.0]{\( - \)}}
\NewDocumentCommand \inv {m} {{#1} ^{\unaryminus1}} 

\NewDocumentCommand \cat {m} {\bbb{#1}} 
\NewDocumentCommand \Cat {m} {\scp{#1}} 
\NewDocumentCommand \op {m} {{#1} °} 
\NewDocumentCommand \co {m} {{#1} ^{∙}} 

\makeatletter
\newcommand*{\length}[1]{%
    \@tempcnta\z@
    \@for\@tempa:=#1\do{\advance\@tempcnta\@ne}%
    \the\@tempcnta%
}
\makeatother

\makeatletter
\newcount\my@repeat@count
\newcommand{\natrec}[2]{%
  \begingroup
  \my@repeat@count=\z@
  \@whilenum\my@repeat@count<#1\do{#2\advance\my@repeat@count\@ne}%
  \endgroup
}
\makeatother

\DeclarePairedDelimiter \hombrackets {(} {)}
\NewDocumentCommand \makehom {m} 
{
	\DeclareDocumentCommand \f {o m m} 
	{
		\IfNoValueTF{##1}
		{{##2} \mathrel{#1} {##3}}
		{{##1} \, \allowbreak \hombrackets{{##2} \mathrel{#1} {##3}}}
	}
}

\NewDocumentCommand \monoid {m m} 
{
	\DeclareDocumentCommand \f {m} 
	{
		\ifthenelse{\isempty{##1}}
		{
			{#2}
		}
		{
			\foreach \item [count=\index] in {##1}
			{
				\ifthenelse{\equal{\index}{1}}
				{} 
				{#1} 
				{\item}
			}
		}
	}
}

\DeclarePairedDelimiter \setbrackets {\{} {\}}
\NewDocumentCommand \set {m}
{
	\ifthenelse{\isempty{#1}}
	{∅}
	{
		\monoid{\mathpunct{,}}{\,}
		\setbrackets{\f{#1}}
	}
}

\DeclarePairedDelimiter \sequencebrackets {[} {]}
\NewDocumentCommand \sequence {m}
{
	\ifthenelse{\isempty{#1}}
	{∅}
	{
		\monoid{\mathpunct{,}}{\,}
		\sequencebrackets{\f{#1}}
	}
}


\NewDocumentCommand \arrow {}{→}
\RenewDocumentCommand \hom {d<> o m m}
{
	\ifthenelse{\equal {#1} {dinatural}}
	{
		\ifpdftex
			\makehom{\di{\arrow}} 
		\else
			\makehom{\ensurestackMath{\ThisStyle{\stackinset{c}{-0.2ex}{c}{0.4ex}{\SavedStyle{¨}}{\SavedStyle{\arrow}}}}}
		\fi
		\f[#2]{#3}{#4}
	}
	{
		\makehom{\arrow}
		\f[#2]{#3}{#4}
	}
}

\NewDocumentCommand \comp {O{1} m}
{
	\monoid
	{\mathbin{\natrec{#1}{\mathord{⋅}}}}
	{\mathrm{id} \ifthenelse{#1 > 1}{^{#1}}{}}
	\f{#2}
}

\NewDocumentCommand \idty {o m}
{
	\comp[#1]{} ({#2})
}

\NewDocumentCommand \pmoc {O{1} m}
{
	\monoid
	{\mathbin{\natrec{#1}{\mathord{∘}}}}
	{\mathrm{id} \ifthenelse{#1 > 1}{^{#1}}{}}
	\f{#2}
}

\NewDocumentCommand \id {m}
{
	\mathrm{id}
	\ifthenelse
	{\equal{#1}{}}
	{}
	{({#1})}
}

\NewDocumentCommand \adjoint {m}
{
	\monoid{\mathrel{⊣}}{\mathrm{id}}
	\f{#1}
}


\NewDocumentCommand \tensor {m}
{
	\monoid{⊗}{\mathrm{I}}
	\f{#1}
}






\NewDocumentCommand \product {m}
{
	\monoid{×}{1}
	\f{#1}
}

\DeclarePairedDelimiter \tuplebrackets {⟨} {⟩}
\DeclarePairedDelimiter \globaltuplebrackets {(} {)}
\NewDocumentCommand \tuple {s m}
{
	\IfBooleanTF{#1}
	{
		\monoid{\mathbin{,}}{\,}
		\globaltuplebrackets{\f{#2}}
	}
	{
		\ifthenelse{\isempty{#2}}
		{!}
		{
			\monoid{\mathbin{,}}{\,}
			\tuplebrackets{\f{#2}}
		}
	}
}

\NewDocumentCommand \coproduct {m}
{
	\monoid{+}{0}
	\f{#1}
}

\DeclarePairedDelimiter \cotuplebrackets {[} {]}
\NewDocumentCommand \cotuple {m}
{
	\ifthenelse{\isempty{#1}}
	{¡}
	{
		\monoid{\mathbin{,}}{\,}
		\cotuplebrackets{\f{#1}}
	}
}


\NewDocumentCommand \boundary {o}
{
	∂
	{
		\IfNoValueTF{#1}
		{}
		{^{#1}}
	}
}





\NewDocumentCommand \rightclass {m m} 
{
	{#2}^{#1}
}
\NewDocumentCommand \leftclass {m m} 
{
	{}^{#1}{#2}
}

\NewDocumentCommand \since {m} {\ensuremath{[\text{#1}]}}
\NewDocumentEnvironment{relationalreasoning}{}{\begin{array}{cl}}{\end{array}}
\NewDocumentCommand \term {o o m}
{
	\IfNoValueTF{#2}
	{{#1} & {#3}\\}
	{{#1} & \since{#2} \\ & {#3} \\}
}


\NewDocumentEnvironment{maptable}{}{\begin{array}{rcl}}{\end{array}}


\NewDocumentCommand \sequent {o o m m}
{
	{#3} \mathrel{⊢\IfNoValueTF{#1}{}{^{#1}}\IfNoValueTF{#2}{}{_{#2}}}{#4}
}


\NewDocumentCommand \cancel {o m} 
{
	\IfNoValueTF{#1}
	{[{#2}]}
	{{[{#2}] ^{#1}}}
}

%% file: macros/local_macros.tex
\NewDocumentCommand \di {m} {\ddot{#1}} 


\NewDocumentCommand \dummy {m} 
{
	\bar{#1}
}

\NewDocumentCommand \dualizer {} {∗}
\NewDocumentCommand \dual {m} {{#1} ^{\dualizer}} 


\NewDocumentCommand \strip {s}
{
	\IfBooleanTF{#1}
	{\, | ^∗}
	{\, | \,}
}


\RenewDocumentCommand \qedsymbol {} 
{
	\ensuremath{□}
}



\NewDocumentCommand \braiding {} {σ}

\NewDocumentCommand \braid {o m m}
{
	\braiding
	{\IfNoValueTF{#1}{}{#1}}
	\tuple*{{#2} , {#3}}
}


\NewDocumentCommand \syllepsis {} {υ}

\NewDocumentCommand \syllepsize {o m m}
{
	\syllepsis
	{\IfNoValueTF{#1}{}{#1}}
	\tuple*{{#2} , {#3}}
}

\NewDocumentCommand \comparitor {o}
{
	\IfNoValueTF{#1}
	{θ}
	{θ_{#1}}
}



\RenewDocumentCommand \lim {} {\underset{⟵}{\mathrm {lim}}}


\NewDocumentCommand \push {o m m} 
{
	\IfNoValueTF{#1}
	{
		{#2}_!
	}
	{
		Σ_{#1} ({#2})
	}
	\ifthenelse{\isempty{#3}}{}{({#3})}
}
\NewDocumentCommand \forward {o m} 
{
	\overrightarrow{#2}
}
\NewDocumentCommand \pull {o m m} 
{
	\IfNoValueTF{#1}
	{
		{#2}^*
	}
	{
		\inv{#1} ({#2})
	}
	\ifthenelse{\isempty{#3}}{}{({#3})}
}
\NewDocumentCommand \backward {o m} 
{
	\overleftarrow{#2}
}

\NewDocumentCommand \cartesianlift {o m m} 
{
	\overset{\hom{∙}{#3}}{#2}
}
\NewDocumentCommand \opcartesianlift {o m m} 
{
	\overset{\hom{#3}{∙}}{#2}
}



\NewDocumentCommand \proarrow {}{⇸}
\NewDocumentCommand \prohom {d<> o m m}
{
	\ifthenelse{\equal {#1} {dinatural}}
	{
		\ifpdftex
			\makehom{\di{\proarrow}} 
		\else
			\makehom{\ensurestackMath{\ThisStyle{\stackinset{c}{-0.2ex}{c}{0.4ex}{\SavedStyle{¨}}{\SavedStyle{\proarrow}}}}}
		\fi
		\f[#2]{#3}{#4}
	}
	{
		\makehom{\proarrow}
		\f[#2]{#3}{#4}
	}
}

\NewDocumentCommand \procomp {m}
{
	\monoid{⊙}{\mathrm{U}}
	\f{#1}
}












\NewDocumentCommand \het {o m m}
{
	\makehom{⤍}
	\f[#1]{#2}{#3}
}


\NewDocumentCommand \pathhom {o m m}
{
	\makehom{⇝}
	\f[#1]{#2}{#3}
}







\NewDocumentCommand \lifts {}
{⧄}
\NewDocumentCommand \orthogonal {s m m}
{
	\IfBooleanTF{#1}
		{{#2} \mathrel{\overline{\lifts}} {#3}}
		{{#2} \mathrel{\lifts} {#3}}
}

\NewDocumentCommand \homotopy {o m m}
{
	\IfNoValueTF{#1}
	{
		\makehom{↝}
		\f{#2}{#3}
	}
	{
		\makehom{\overset{#1} {↝}}
		\f{#2}{#3}
	}
}

\NewDocumentCommand \hcomp {O{1} m}
{
	\monoid
	{\mathbin{\natrec{#1}{\mathord{+}}}}
	{0 \ifthenelse{#1 > 1}{^{#1}}{}}
	\f{#2}
}






\NewDocumentCommand \doublehom {o m m m m} 
{
	\IfNoValueTF{#1}
	{\prescript{#4\vphantom{#3}}{#2\vphantom{#5}}{}{◇}{}^{#3\vphantom{#4}}_{#5\vphantom{#2}}}
	{{#1} \, \allowbreak \hombrackets{\prescript{#4\vphantom{#3}}{#2\vphantom{#5}}{}{◇}{}^{#3\vphantom{#4}}_{#5\vphantom{#2}}}}
}




\NewDocumentCommand \fix {o m}
{
	\mathop{\mathrm{fix} {\IfValueTF {#1}{_{#1}}{}}} \ifthenelse{\isempty{#2}}{}{({#2})}
}




\NewDocumentCommand \boundrel {o m m}
{
	{#2} \mathbin{⪯} {#3}
	\IfNoValueTF{#1}{}{ : #1}
}


\NewDocumentCommand \costaction{o m m} 
{
	{#2} \mathbin{+_{\IfNoValueTF{#1}{⋅}{#1}}} {#3}
}



\NewDocumentCommand \evaluatesto {o m m}  
{
	{#2} \mathrel{\mathord{⇓}{\IfNoValueTF{#1}{}{^{#1}}}} {#3}
}


%% file: content/outline.tex

\begin{abstract} \pdfbookmark[1]{Abstract}{section: abstract}
	\subimport*{parts/}{abstract}

\end{abstract}

\keywords{double categories, dinaturality, surface diagrams}

\msc
{
	18N10, 
	18A23, 
	18M30 
}

\begin{sect}{Introduction} \label{section: introduction}
	\subimport*{parts/}{introduction}

\end{sect}

\begin{sect}{Double Categorical Preliminaries} \label{section: double categories}
	\subimport*{parts/}{double_categories}

\end{sect}

\begin{sect}{Dinaturality} \label{section: dinaturality}
	\subimport*{parts/}{dinaturality}

\end{sect}

\begin{sect}{Elements of Extranaturality} \label{section: extranaturality}
	\subimport*{parts/}{extranaturality}

\end{sect}

\begin{sect*}{References} \pdfbookmark[1]{References}{section: references}
	\printbibliography[heading=none]
\end{sect*}

\appendix

\begin{sect}{Natural Transformations and Modifications} \label{section: double categories ctd}
	\subimport*{parts/}{double_categories_ctd}

\end{sect}

\begin{sect}{Diagrammatics of Dinaturality} \label{section: dinaturality_diagrammatics}
	\subimport*{parts/}{dinaturality_diagrammatics}
\end{sect}

\begin{sect}{Proofs} \label{section: proofs}
	\subimport*{parts/}{proofs}

\end{sect}

%% file: content/parts/abstract.tex

In this paper we extend the concept of dinaturality to the setting of double categories.
We introduce the dinatural versions of double-categorical transformations and modifications,
and show that ordinary natural transformations and modifications
correspond to dinatural ones between dummy functors.
Although dinatural transformations don't generally compose with each other,
they do compose with natural transformations,
and we investigate the algebra of this composition.
In our motivating example of dinaturality for double categories,
we derive the caps and cups of Eilenberg--Kelly graphs for extranatural transformations
as dinatural transformation components,
and the corresponding adjunction laws as (di)modification components.
In an appendix we extend the surface diagram calculus
for the locally cubical Gray category of small double categories
to include dinatural constructions.

%% file: content/parts/introduction.tex

At least proverbially, category theory was invented to explain what it means for a family of maps to be \emph{natural}.
The structure of a \emph{natural transformation} of functors was a first attempt to address this question
\cite{mac_lane-1945-naturality}.
But it was immediately clear that this structure captures only some of the ways
in which a family of maps can be intuitively considered natural.
For example, the familiar \emph{evaluation map} in the category of sets,
taking a pair of sets $\tuple*{A , B}$ to the function $\mathrm{eval} \, \tuple*{A , B} : \hom{\product{A , (\hom{A}{B})}}{B}$,
is intuitively natural, in the sense of being generic in both $A$ and $B$,
yet it is not a natural transformation.
While it is formally natural in $B$, in that given a function $b : \hom{B}{B′}$
we have $\comp{\product{A , (\hom{A}{b})} , \mathrm{eval} \, \tuple*{A , B′}} = \comp{\mathrm{eval} \, \tuple*{A , B} , b}$,
this reasoning does not work for $A$.
Roughly speaking, naturality links the $B$ in the domain of $\mathrm{eval} \, \tuple*{A , B}$ with the $B$ in the codomain.
But there is no $A$ in the codomain, while there are two in the domain, one appearing covariantly and the other contravariantly;
and $\mathrm{eval}$ somehow links these with each other.

It was in the context studying maps of such mixed-variance functors that Eilenberg and Kelly proposed expanding the concept of naturality
in a way that includes both \emph{ordinary naturality}, like that of $B$ above, and \emph{extraordinary naturality}, like that of $A$
\cite{eilenberg-1966-functorial_calculus}.
Dubuc and Street identified the concept of a \emph{dinatural transformation},
which they used to decompose Eilenberg and Kelly's \emph{generalized natural transformations} (a.k.a. ``extranatural transformations'')
\cite{street-1970-dinatural_transformations}.
A dinatural transformation is a map between parallel functors that can each take as input both covariant and contravariant data.
Roughly speaking, in a dinatural transformation data can be linked
across the two boundary functors with a consistent variance
or within one of them with complementary variance \cite{mccusker-2021-dinatuality}.

Here, we extend the concept of dinatural transformation to the setting of double categories.
Because double categories have two independent dimensions, we must respect the variance of constructions in each of them.
Because they have an addition dimension of structure relative to $1$-categories,
we expect the original coherence relation for dinatural transformations
to correspond to a new family of $2$-cells,
which will in turn require their own coherence relations.
Because double categories and their hierarchy of morphisms have three dimensions of structure,
we can define $3$-cell \emph{dimodifications} that mediate between dinatural transformations in an appropriate way.
Dinaturality for $1$-categories provides a common setting for the study of both ordinary and extraordinary naturality,
and this is true for double categories as well.

%% file: content/parts/double_categories.tex

We will work in the setting of \define[weak double category]{weak double categories},
where composition is strict in the \define{arrow dimension} $\hom{}{}$,
with composition ``$\comp{,}$'' and units ``$\comp{}$'',
but associative and unital only up to coherent isomorphism in the \define{proarrow dimension} $\prohom{}{}$,
with composition ``$\procomp{,}$'' and units ``$\procomp{}$''.
In fact, this choice matters little due to a coherence theorem
ensuring that any weak double category is suitably equivalent
to one with strict composition in both dimensions \cite{grandis-1999-limits_in_double_categories}.

Despite working in the weak setting, we will omit proarrow composition coherator isomorphisms from diagrams,
and, with warning, from algebraic expressions as well.
The main reason for this is that ternary composition plays a central role in dinatural constructions.
We can,  of course, assign arbitrary binary bracketings to ternary composites
and use associator isomorphisms to manipulate the bracketing as needed.
But this adds considerable notational overhead that can distract from the conceptual core of some constructions.
For equations between expressions in weak double categories
that hold up to suppressed composition coherators, we write ``$≅$'' rather than ``$=$''.
Those uninterested in the intricacies of weak composition structure
may simply take the double categories to be strict
and read ``$≅$'' as ``$=$''.

Our terminology and notation for double categories follow \cite{morehouse-2023-cartesian_gray_monoidal_double_categories},
adopting the string diagrams of \cite{myers-2016-string_diagrams_for_double_categories};
except that, for reasons to be explained later, we represent arrows horizontally and proarrows vertically.
A full boundary specification for a square in a double category is written
``$\doublehom[\doublehom[\cat{C}]{\prohom{A}{C}}{\prohom{B}{D}}{\hom{A}{B}}{\hom{C}{D}}]{m}{n}{f}{g}$'',
but any well-formed prefix may be omitted if it is clear from context or irrelevant.

We refer to squares having only identity boundary morphisms in some dimension as \define[globular square]{globular} or \define[disk]{disks},
and use the abbreviations ``$\prohom{f}{g}$'' for $\doublehom{\procomp{}}{\procomp{}}{f}{g}$
and ``$\hom{m}{n}$'' for $\doublehom{m}{n}{\comp{}}{\comp{}}$.
An \define{invertible disk} is one with a two-sided inverse in the dimension where it has not-necessarily-trivial boundary.
An \define{identity disk} is one of the form $\procomp{} \, f : \prohom{f}{f}$ or $\comp{} \, m : \hom{m}{m}$.
A (double) \define{identity square} $\comp[2]{} : \doublehom{\procomp{}}{\procomp{}}{\comp{}}{\comp{}}$
is an identity disk in both dimensions.
For concision we often use \define{dimensional promotion}
to elide an ``$\comp{}$'' or ``$\procomp{}$'' from a subterm when its dimension is evident.
Composition is written in left-to-right order.

Given a double category $\cat{C}$,
its \define{proarrow-dimension reflection} $\co{\cat{C}}$ (pronounced ``$\cat{C}$-co'')
is the double category with the same objects, arrows, proarrows, and squares,
but with cell boundaries in the proarrow dimension transposed
and composition order in that dimension reversed.
Thus, each square
$φ : \doublehom[\doublehom[\cat{C}]{\prohom{A}{C}}{\prohom{B}{D}}{\hom{A}{B}}{\hom{C}{D}}]{m}{n}{f}{g}$
has a doppelgänger
$φ : \doublehom[\doublehom[\co{\cat{C}}]{\prohom{C}{A}}{\prohom{D}{B}}{\hom{C}{D}}{\hom{A}{B}}]{m}{n}{g}{f}$,
and string diagrams in $\co{\cat{C}}$ look like those in $\cat{C}$ ``flipped upside down''.
For example, the following diagrams depict the composite square $\procomp{φ , ψ}$ in $\cat{C}$ and in $\co{\cat{C}}$.
\[
	\begin{tikzpicture}[string diagram , x = {(16mm , 0mm)} , y = {(0mm , -16mm)} , baseline=(align.base)]
		\coordinate (arrow) at (1/2 , 1/2) ;
		\coordinate (proarrow 1) at (1/4 , 2/7) ;
		\coordinate (proarrow 2) at (3/4 , 5/7) ;
		\draw [name path = arr] (arrow |- 0 , 0) to node [fromabove] {f} node [tobelow] {h} (arrow |- 1 , 1) ;
		\draw  [name path = pro 1] (0 , 0 |- proarrow 1) to node [fromleft] {m} node [toright] {n} (1 , 1 |- proarrow 1) ;
		\draw  [name path = pro 2] (0 , 0 |- proarrow 2) to node [fromleft] {p} node [toright] {q} (1 , 1 |- proarrow 2) ;
		\path [name intersections = {of = arr and pro 1 , by = {cell 1}}] ;
		\path [name intersections = {of = arr and pro 2 , by = {cell 2}}] ;
		\node at ($ (cell 1) + (-1/3 , -1/4) $) {A} ;
		\node at ($ (cell 1) + (1/3 , -1/4) $) {B} ;
		\node at ($ (-1/3 , 0) + (cell 1) ! 1/2 ! (cell 2) $) {C} ;
		\node at ($ (1/3 , 0) + (cell 1) ! 1/2 ! (cell 2) $) {D} ;
		\node at ($ (cell 2) + (-1/3 , 1/4) $) {E} ;
		\node at ($ (cell 2) + (1/3 , 1/4) $) {F} ;
		\node [bead] at (cell 1) {φ} ;
		\node [bead] at (cell 2) {ψ} ;
		\node (align) at ($ (cell 1) ! 1/2 ! (cell 2) $) {} ;
	\end{tikzpicture}
	\quad : \quad
	\cat{C}
	\hspace{20mm} ↔ \hspace{20mm}
	\begin{tikzpicture}[string diagram , x = {(16mm , 0mm)} , y = {(0mm , -16mm)} , baseline=(align.base)]
		\coordinate (arrow) at (1/2 , 1/2) ;
		\coordinate (proarrow 1) at (1/4 , 2/7) ;
		\coordinate (proarrow 2) at (3/4 , 5/7) ;
		\draw [name path = arr] (arrow |- 0 , 0) to node [fromabove] {h} node [tobelow] {f} (arrow |- 1 , 1) ;
		\draw  [name path = pro 1] (0 , 0 |- proarrow 1) to node [fromleft] {p} node [toright] {q} (1 , 1 |- proarrow 1) ;
		\draw  [name path = pro 2] (0 , 0 |- proarrow 2) to node [fromleft] {m} node [toright] {n} (1 , 1 |- proarrow 2) ;
		\path [name intersections = {of = arr and pro 1 , by = {cell 1}}] ;
		\path [name intersections = {of = arr and pro 2 , by = {cell 2}}] ;
		\node at ($ (cell 1) + (-1/3 , -1/4) $) {E} ;
		\node at ($ (cell 1) + (1/3 , -1/4) $) {F} ;
		\node at ($ (-1/3 , 0) + (cell 1) ! 1/2 ! (cell 2) $) {C} ;
		\node at ($ (1/3 , 0) + (cell 1) ! 1/2 ! (cell 2) $) {D} ;
		\node at ($ (cell 2) + (-1/3 , 1/4) $) {A} ;
		\node at ($ (cell 2) + (1/3 , 1/4) $) {B} ;
		\node [bead] at (cell 1) {ψ} ;
		\node [bead] at (cell 2) {φ} ;
		\node (align) at ($ (cell 1) ! 1/2 ! (cell 2) $) {} ;
	\end{tikzpicture}
	\quad : \quad
	\co{\cat{C}}
\]

The \define{empty double category} $\cat{\coproduct{}}$ has no objects.
A (chosen) \define{singleton double category} $\cat{\product{}}$
has just one object, one arrow, one proarrow, and one square.
Given double categories $\cat{C}$ and $\cat{D}$,
the \define{cartesian ordered pair double category}
$\product{\cat{C} , \cat{D}}$
has objects, arrows, proarrows, and squares
consisting of ordered pairs of the same sort of cell from $\cat{C}$ and $\cat{D}$ respectively,
with the composition structure given factor-wise.
As usual, $\cat{\product{,}}$ and $\cat{\product{}}$ determine
finite \define{cartesian product} structure for categories of double categories.
 
(Strict) \define[functor]{functors} of double categories are boundary-preserving functions of objects, arrows, proarrows, and squares
that preserve the composition structure in both dimensions.
A functor whose domain is a cartesian product is \define[dummy functor]{dummy} in one of its arguments
if it factors through the projection to the other.
The function sending a functor $F : \hom{\cat{C}}{\cat{D}}$
to the dummy functor $\dummy{F} ≔ \comp{π_1 , F} : \hom{\product{\cat{B} , \cat{C}}}{\cat{D}}$
is invertible unless $\cat{B} = \cat{\coproduct{}}$ but $\cat{C} ≠ \cat{\coproduct{}}$.

A pair of functors $\tensor{,} : \hom{\product{\cat{C} , \cat{C}}}{\cat{C}}$
and $\tensor{} : \hom{\cat{\product{}}}{\cat{C}}$
that obey the monoid laws
endow $\cat{C}$ with the structure of a \define{monoidal double category}.
Given $\cat{C}$-squares $φ$ and $ψ$,
we can ``deproject'' the string diagram for the $\cat{C}$-square $\tensor{φ , ψ}$
shown on the left to the surface diagram shown on the right,
where surface juxtaposition represents the monoidal product.
\[
	\begin{tikzpicture}[string diagram , x = {(16mm , 0mm)} , y = {(0mm , -12mm)} , baseline=(align.base)]
		\coordinate (cell) at (1/2 , 1/2) ;
		\draw (0 , 0 |- cell) to node [fromleft] {\tensor{m , p}} node [toright] {\tensor{n , q}} (1 , 1 |- cell) ;
		\draw (cell |- 0 , 0) to node [fromabove] {\tensor{f , i}} node [tobelow] {\tensor{g , j}} (cell |- 1 , 1) ;
		\node [bead] (align) at (cell) {\tensor{φ , ψ}} ;
	\end{tikzpicture}
	\qquad = \qquad
	\begin{tikzpicture}[string diagram , x = {(12mm , 6mm)} , y = {(0mm , -16mm)} , z = {(10.5mm , 0mm)} , baseline=(align.base)]
			\coordinate (sheet) at (1/2 , 1/2 , 0) ;
			\draw [sheet]
				($ (sheet) + (-1/2 , -1/2 , 0) $) to coordinate [pos = 1/2] (top)
				($ (sheet) + (1/2 , -1/2 , 0) $) to coordinate [pos = 1/2] (right)
				($ (sheet) + (1/2 , 1/2 , 0) $) to coordinate [pos = 1/2] (bot)
				($ (sheet) + (-1/2 , 1/2 , 0) $) to coordinate [pos = 1/2] (left)
				cycle
			;
			\draw [on sheet , name path = pro] (left) to node [fromleft] {m} node [toright] {n} (right) ;
			\draw [on sheet , name path = arr] (top) to node [fromabove] {f} node [tobelow] {g} (bot) ;
			\path [name intersections = {of = pro and arr , by = {cell}}] ;
			\node [bead] at (cell) {φ} ;
			\coordinate (sheet) at ($ (sheet) + (0 , 0 , 1) $) ;
			\draw [sheet]
				($ (sheet) + (-1/2 , -1/2 , 0) $) to coordinate [pos = 1/2] (top)
				($ (sheet) + (1/2 , -1/2 , 0) $) to coordinate [pos = 1/2] (right)
				($ (sheet) + (1/2 , 1/2 , 0) $) to coordinate [pos = 1/2] (bot)
				($ (sheet) + (-1/2 , 1/2 , 0) $) to coordinate [pos = 1/2] (left)
				cycle
			;
			\draw [on sheet , name path = pro] (left) to node [fromleft] {p} node [toright] {q} (right) ;
			\draw [on sheet , name path = arr] (top) to node [fromabove] {i} node [tobelow] {j} (bot) ;
			\path [name intersections = {of = pro and arr , by = {cell}}] ;
			\node [bead] (align) at (cell) {ψ} ;
	\end{tikzpicture}
\]

The higher-dimensional structure of double categories and their functors
has \refer[natural transformation]{natural transformations} as $2$-cells
and \refer[modification]{modifications} as $3$-cells.
For ease of reference, their definitions are recalled in appendix \ref{section: double categories ctd}.
Due to venue-imposed space restrictions,
proofs of the propositions in this paper are relegated to appendix \ref{section: proofs}.
In the electronic version the tombstone ending each proposition is a link to its proof.

\begin{lemma} \label{theorem: natural transformation product yang baxter}
	For a (proarrow-dimension pseudo) \refer{natural transformation} $α : \prohom[\hom[]{\product{\cat{B} , \cat{C}}}{\cat{D}}]{F}{G}$,
	and proarrows $p : \prohom[\cat{B}]{X}{Y}$ and $m : \prohom[\cat{C}]{A}{B}$,
	we have the following equation, also represented in string diagrams below.
	\begin{equation} \label{natural transformation product yang baxter}
		\comp
		{
			(\procomp{α \tuple*{X , m} , G \tuple*{p , B}}) ,
			(\procomp{\inv{α \tuple*{p , A}} , G \tuple*{Y , m}})
		}
		\; ≅ \;
		\comp
		{
			(\procomp{F \tuple*{X , m} , \inv{α \tuple*{p , B}}}) ,
			(\procomp{F \tuple*{p , A} , α \tuple*{Y , m}})
		}
	\end{equation}
	\[
		\begin{tikzpicture}[string diagram , x = {(36mm , 0mm)} , y = {(0mm , -20mm)} , baseline=(align.base)]
			\coordinate (falling front proarrow in) at (0 , 1/8) ;
			\coordinate (falling front proarrow out) at (1 , 7/8) ;
			\coordinate (rising front proarrow in) at (0 , 7/8) ;
			\coordinate (rising front proarrow out) at (1 , 1/8) ;
			\coordinate (back proarrow in) at (0 , 1/2) ;
			\coordinate (back proarrow out) at (1 , 1/2) ;
			\draw [name path = back proarrow]
				(back proarrow in) to [out = east , in = west] node [fromleft] {α \tuple*{X , B}}
				($ (falling front proarrow in) ! 1/2 ! (rising front proarrow out) $) to [out = east , in = west] node [toright] {α \tuple*{Y , A}}
				(back proarrow out) ;
			\draw [name path = falling front proarrow]
				(falling front proarrow in) to [out = east , out looseness = 0.75 , in looseness = 1.25 , in = west]
					node [fromleft] {F \tuple*{X , m}} node [toright] {G \tuple*{Y , m}}
				(falling front proarrow out) ;
			\draw [name path = rising front proarrow]
				(rising front proarrow in) to [out = east , out looseness = 1.25 , in looseness = 0.75 , in = west]
					node [fromleft] {G \tuple*{p , B}} node [toright] {F \tuple*{p , A}}
				(rising front proarrow out) ;
			\path [name intersections = {of = falling front proarrow and back proarrow , by = {falling interchanger}}] ;
			\path [name intersections = {of = rising front proarrow and back proarrow , by = {rising interchanger}}] ;
			\path [name intersections = {of = falling front proarrow and rising front proarrow , by = {product interchanger}}] ;
			\node [bead] at (falling interchanger) {α \tuple*{X , m}} ;
			\node [bead] at (rising interchanger) {\inv{α \tuple*{p , A}}} ;
			\node [bead] at (product interchanger) {\comp{} \, G \tuple*{p , m}} ;
			\node (align) at (back proarrow out) {} ;
		\end{tikzpicture}
		\; ≅ \;
		\begin{tikzpicture}[string diagram , x = {(36mm , 0mm)} , y = {(0mm , -20mm)} , baseline=(align.base)]
			\coordinate (falling front proarrow in) at (0 , 1/8) ;
			\coordinate (falling front proarrow out) at (1 , 7/8) ;
			\coordinate (rising front proarrow in) at (0 , 7/8) ;
			\coordinate (rising front proarrow out) at (1 , 1/8) ;
			\coordinate (back proarrow in) at (0 , 1/2) ;
			\coordinate (back proarrow out) at (1 , 1/2) ;
			\draw [name path = back proarrow]
				(back proarrow in) to [out = east , in = west] node [fromleft] {α \tuple*{X , B}}
				($ (rising front proarrow in) ! 1/2 ! (falling front proarrow out) $) to [out = east , in = west] node [toright] {α \tuple*{Y , A}}
				(back proarrow out) ;
			\draw [name path = falling front proarrow]
				(falling front proarrow in) to [out = east , out looseness = 1.25 , in looseness = 0.75 , in = west]
					node [fromleft] {F \tuple*{X , m}} node [toright] {G \tuple*{Y , m}}
				(falling front proarrow out) ;
			\draw [name path = rising front proarrow]
				(rising front proarrow in) to [out = east , out looseness = 0.75 , in looseness = 1.25 , in = west]
					node [fromleft] {G \tuple*{p , B}} node [toright] {F \tuple*{p , A}}
				(rising front proarrow out) ;
			\path [name intersections = {of = falling front proarrow and back proarrow , by = {falling interchanger}}] ;
			\path [name intersections = {of = rising front proarrow and back proarrow , by = {rising interchanger}}] ;
			\path [name intersections = {of = falling front proarrow and rising front proarrow , by = {product interchanger}}] ;
			\node [bead] at (rising interchanger) {\inv{α \tuple*{p , B}}} ;
			\node [bead] at (falling interchanger) {α \tuple*{Y , m}} ;
			\node [bead] at (product interchanger) {\comp{} \, F \tuple*{p , m}} ;
			\node (align) at (back proarrow out) {} ;
		\end{tikzpicture}
		\tag*{\hyperlink{proof: natural transformation product yang baxter}{\qedsymbol}}
	\]
\end{lemma}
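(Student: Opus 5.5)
We use the fact that the proarrow $\procomp{p , m}$-type interchange in the product $\product{\cat{B} , \cat{C}}$ can be factored in two ways, and then apply the naturality of $α$ with respect to a composite proarrow. Concretely, in $\product{\cat{B} , \cat{C}}$ the proarrow $\tuple*{p , m} : \prohom{\tuple*{X , A}}{\tuple*{Y , B}}$ decomposes up to coherator isomorphism both as $\procomp{\tuple*{X , m} , \tuple*{p , B}}$ and as $\procomp{\tuple*{p , A} , \tuple*{Y , m}}$. The interchange square between these two factorizations is given componentwise by unitors, and its image under a functor is the disk $\comp{} \, G \tuple*{p , m}$ (respectively $\comp{} \, F \tuple*{p , m}$) appearing in the diagrams, up to the suppressed coherators.

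The plan has three steps. First, I recall from appendix \ref{section: double categories ctd} that a pseudo natural transformation $α$ assigns to each proarrow $n$ an invertible square $α\, n$, and that it satisfies a functoriality law for proarrow composites: $α(\procomp{n , n′})$ is the pasting of $α\, n$ and $α\, n′$, modulo the compositors of $F$ and $G$. It also satisfies a naturality law for squares: for any square $θ : \doublehom{n}{n′}{f}{g}$, pasting $F θ$ with $α\, n′$ equals pasting $α\, n$ with $G θ$, with the arrow components $α f$, $α g$ inserted. Second, I apply the composite law to both factorizations of $\tuple*{p , m}$. This expresses $α \tuple*{p , m}$ in one way as a pasting of $α \tuple*{X , m}$ and $α \tuple*{p , B}$, and in another way as a pasting of $α \tuple*{p , A}$ and $α \tuple*{Y , m}$. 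Third, I apply square-naturality to the globular interchange square relating the two factorizations. Its arrow boundaries are identities, so the inserted arrow components are identities, and the result equates the two expressions for $α \tuple*{p , m}$ once they are conjugated by $G$ of the interchange on one side and $F$ of the interchange on the other. Finally, I move $α \tuple*{p , A}$ to the left-hand side and $α \tuple*{p , B}$ to the right-hand side by composing with their inverses. This yields exactly equation \eqref{natural transformation product yang baxter}: each side is a ternary pasting of the two $α$-components with one of the interchange disks.

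The main obstacle is bookkeeping rather than mathematics. The inverses must land in the correct positions: $\inv{α \tuple*{p , A}}$ after $α \tuple*{X , m}$ on the left, and $\inv{α \tuple*{p , B}}$ first on the right. The weak proarrow compositors of $\cat{D}$ and of $F$ and $G$ must also cancel, which is why the statement only holds up to ``$≅$''. To handle this, I would first carry out the computation in the strict case, where the interchange disks are identities and the argument reduces to cancelling inverses in two expansions of $α \tuple*{p , m}$. I would then invoke the coherence theorem of \cite{grandis-1999-limits_in_double_categories} to transfer the result to the weak setting, reading the resulting equation up to suppressed coherators.
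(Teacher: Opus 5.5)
Your argument is correct and is essentially the paper's proof. The paper inserts the cancelling pair $\alpha(p,B)^{-1}\cdot\alpha(p,B)$, slides the back proarrow across the crossing of the front proarrows by square naturality for the identity (interchange) disk on $(p,m)$, with $\alpha$ of each composite expanded into its two component disks, and then cancels $\alpha(p,A)\cdot\alpha(p,A)^{-1}$; this is the diagrammatic form of your whiskering of the two expansions by the inverses. Your detour through strictification and the coherence theorem is unnecessary, since your third step already works directly in the weak setting up to the suppressed coherators that ``$\cong$'' permits.
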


\begin{lemma}[dummy natural transformations and modifications] \label{theorem: dummy natural transformations}
	Each \refer{natural transformation}
	$α : \prohom[\hom[]{\cat{C}}{\cat{D}}]{F}{G}$
	determines a natural transformation of \refer[dummy functor]{dummy functors}
	$\dummy{α} : \prohom[\hom[]{\product{\cat{B} , \cat{C}}}{\cat{D}}]{\dummy{F}}{\dummy{G}}$,
	which we call a \define{dummy natural transformation}, with
	\refer[proarrow natural transformation object-component proarrow]{object-component proarrows}
	$\dummy{α} \tuple*{X , A} ≔ α A$,
	\refer[proarrow natural transformation arrow-component square]{arrow-component squares}
	$\dummy{α} \tuple*{i , f} ≔ α f$, and
	\refer[proarrow natural transformation proarrow-component disk]{proarrow-component disks}
	$\dummy{α} \tuple*{p , m} ≔ α m$.
	Likewise, each \refer{modification}
	$
		μ : \doublehom[\doublehom[\hom[]{\cat{C}}{\cat{D}}]
		{\prohom{F}{I}}{\prohom{G}{J}}{\hom{F}{G}}{\hom{I}{J}}]
		{γ}{δ}{α}{β}
	$
	determines a  \define{dummy modification}
	$
		\dummy{μ} : \doublehom[\doublehom[\hom[]{\product{\cat{B} , \cat{C}}}{\cat{D}}]
		{\prohom{\dummy{F}}{\dummy{I}}}{\prohom{\dummy{G}}{\dummy{J}}}{\hom{\dummy{F}}{\dummy{G}}}{\hom{\dummy{I}}{\dummy{J}}}]
		{\dummy{γ}}{\dummy{δ}}{\dummy{α}}{\dummy{β}}
	$
	with
	\refer[modification object-component square]{object-component squares}
	$\dummy{μ} \tuple*{X , A} ≔ μ A$.
	\hfill \hyperlink{proof: dummy natural transformations}{\qedsymbol}
\end{lemma}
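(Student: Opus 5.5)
The plan is to check directly that the proposed data satisfy each clause of the definitions of natural transformation and modification recalled in appendix \ref{section: double categories ctd}. The idea is that the dummy functors are whiskerings: $\dummy{F} = \comp{π_1 , F}$, where the projection to $\cat{C}$ is written $π_1$ as in the paper's definition of dummy functors. So $\dummy{α}$ should be the whiskering of $α$ by this projection, and $\dummy{μ}$ the whiskering of $μ$. Rather than appeal to a general whiskering result that the paper has not stated, I will verify the axioms componentwise. Each axiom for the dummy data will reduce to the corresponding axiom for $α$ or $μ$ evaluated at the projected data.

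First I check that the boundaries are correct. Since $\dummy{F}\tuple*{X , A} = F A$ and $\dummy{F}\tuple*{i , f} = F f$, the proarrow $α A$ has boundary $\prohom{\dummy{F}\tuple*{X , A}}{\dummy{G}\tuple*{X , A}}$. The square $α f$ has the boundary required of the component at $\tuple*{i , f}$. The disk $α m$ has the boundary required of the component at $\tuple*{p , m}$, namely one relating $\procomp{\dummy{F}\tuple*{p , m} , α B}$ and $\procomp{α A , \dummy{G}\tuple*{p , m}}$, since $\dummy{F}\tuple*{p , m} = F m$. Invertibility of the proarrow-component disks (the pseudo condition) is inherited from $α$.

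Next I check the axioms. These are functoriality of the arrow components in the arrow dimension, and compatibility of the proarrow components with proarrow composition and units, up to the coherators. There is also the interchange/naturality condition relating the arrow-component squares to the proarrow-component disks along a square $\tuple*{θ , φ}$ of $\product{\cat{B} , \cat{C}}$. Composition in the product is factorwise, and the components ignore the $\cat{B}$ factor. So, for example, $\dummy{α}(\comp{\tuple*{i , f} , \tuple*{i′ , f′}}) = α(\comp{f , f′}) = \comp{α f , α f′}$. Each equation for $\dummy{α}$ is literally the corresponding equation for $α$ at the $\cat{C}$-components, and the same holds for the unit, proarrow-composition, and square-naturality equations. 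For $\dummy{μ}$ the same reasoning applies. The boundary of $\dummy{μ}\tuple*{X , A} = μ A$ is formed from $γ A = \dummy{γ}\tuple*{X , A}$ and the corresponding components of $δ$, $α$ and $β$, as required. The modification axioms, which compatibly relate the object-component squares with the arrow and proarrow components of the four bounding transformations, again reduce to those of $μ$ at the projected data.

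The only step needing some care is the treatment of weak proarrow composition. The coherators of $\product{\cat{B} , \cat{C}}$ are pairs of coherators, and $\dummy{F}$ sends them to the coherators of $\cat{D}$ exactly as $F$ sends those of $\cat{C}$. I also need that the functors are strict, so that no comparison cells intervene. Granting this, the suppressed-coherator equations ($≅$) for $\dummy{α}$ match those for $α$ term by term. This bookkeeping is the main, and only mild, obstacle. The rest is unwinding definitions.
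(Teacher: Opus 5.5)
Your proposal is correct and follows essentially the same route as the paper: you check each axiom for $\dummy{\alpha}$ and $\dummy{\mu}$ componentwise. Each one reduces to the corresponding axiom for $\alpha$ or $\mu$, because composition in $\cat{B} \times \cat{C}$ is factorwise and the components ignore the $\cat{B}$ factor. The paper does the same direct unwinding, writing out binary proarrow-composition compatibility and square naturality as representative cases; your whiskering remark is only motivation for that check.
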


%% file: content/parts/dinaturality.tex

In the setting of $1$-categories,
Dubuc and Street observed that there is another natural notion of transformation between parallel functors,
which they called a ``dinatural transformation'' \cite{street-1970-dinatural_transformations}.
Whereas a natural transformation of mixed-variance $1$-functors $\hom[\hom[]{\product{\op{\cat{C}} , \cat{C}}}{\cat{D}}]{F}{G}$
has component arrows indexed by an ordered pair of $\cat{C}$-objects,
a dinatural transformation $\hom<dinatural>[\hom[]{\product{\op{\cat{C}} , \cat{C}}}{\cat{D}}]{F}{G}$
-- note the diaeresis -- has component arrows indexed by a single $\cat{C}$-object,
and a coherence relation extended to ensure harmony between the two variances.
The concept of dinatural transformation was extended to $2$-categories by Vidal and Tur \cite{vidal-2010-2_categorical_institution}.
Here we extend it to double categories
and define a dinatural version of modification as well.

The main ingredients for dinatural constructions are pairs of categorical structures related by a reflection.

\begin{definition}[dual pairing]
	The (proarrow-dimension) \define{dual pairing} for a double category $\cat{C}$
	is the \refer{cartesian ordered pair double category}
	that it forms with its \refer{proarrow-dimension reflection},
	$\product{\co{\cat{C}} , \cat{C}}$.
\end{definition}

More generally, we refer to the product of any categorical structure with any of its dimensional reflections as a dual pairing.
For example, we have dual pairings of $1$-categories under the \emph{opposite category} reflection, $\product{\op{\cat{C}} , \cat{C}}$.

\begin{definition}[difunctor]
	We refer to a functor whose domain is a dual pairing as a \define{difunctor}.
\end{definition}

We arrive at the dinatural version of a natural transformation for double categories.

\begin{definition}[dinatural transformation]
	For parallel difunctors of double categories
	$F , G : \hom{\product{\co{\cat{C}} , \cat{C}}}{\cat{D}}$
	a (proarrow-dimension pseudo) \define{dinatural transformation}
	$α : \prohom<dinatural>{F}{G}$
	consists of the following data.
	\begin{description}
		\item[{\define*[proarrow dinatural transformation object-component proarrow]{object-component proarrows}}:]
			for each $\cat{C}$-object
			$A$
			a $\cat{D}$-proarrow
			$α A : \prohom{F \tuple*{A , A}}{G \tuple*{A , A}}$,
		\item[{\define*[proarrow dinatural transformation arrow-component square]{arrow-component squares}}:]
			for each $\cat{C}$-arrow
			$f : \hom{A}{B}$
			a $\cat{D}$-square
			\[
				\begin{tikzpicture}[string diagram , x = {(16mm , 0mm)} , y = {(0mm , -12mm)} , baseline=(align.base)]
					\coordinate (back proarrow in) at (0 , 1/2) ;
					\coordinate (back proarrow out) at (1 , 1/2) ;
					\coordinate (front arrow in) at (1/2 , 0) ;
					\coordinate (front arrow out) at (1/2 , 1) ;
					\draw [name path = back proarrow]
						(back proarrow in) to node [fromleft] {α A} node [toright] {α B}
						(back proarrow out) ;
					\draw [name path = front arrow]
						(front arrow in) to node [fromabove] {F \tuple*{f , f}} node [tobelow] {G \tuple*{f , f}}
						(front arrow out) ;
					\path [name intersections = {of = back proarrow and front arrow , by = {cell}}] ;
					\node at ($ (cell) + (-1/2 , -1/3) $) {F \tuple*{A , A}} ;
					\node at ($ (cell) + (1/2 , -1/3) $) {F \tuple*{B , B}} ;
					\node at ($ (cell) + (-1/2 , 1/3) $) {G \tuple*{A , A}} ;
					\node at ($ (cell) + (1/2 , 1/3) $) {G \tuple*{B , B}} ;
					\node [bead] (align) at (cell) {α f} ;
				\end{tikzpicture}
			\]
		\item[{\define*[proarrow dinatural transformation proarrow-component disk]{proarrow-component disks}}:]
			for each $\cat{C}$-proarrow
			$m : \prohom{A}{B}$
			an \refer[invertible disk]{invertible} $\cat{D}$-proarrow \refer{disk}
			\[
				\begin{tikzpicture}[string diagram , x = {(24mm , 0mm)} , y = {(0mm , -20mm)} , baseline=(align.base)]
					\coordinate (falling front proarrow in) at (0 , 1/8) ;
					\coordinate (falling front proarrow out) at (1 , 7/8) ;
					\coordinate (rising front proarrow in) at (0 , 7/8) ;
					\coordinate (rising front proarrow out) at (1 , 1/8) ;
					\coordinate (back proarrow in) at (0 , 1/2) ;
					\coordinate (back proarrow out) at (1 , 1/2) ;
					\draw [name path = back proarrow]
						(back proarrow in) to [out = east , in = west] node [fromleft] {α B} node [toright] {α A}
						(back proarrow out) ;
					\draw [name path = falling front proarrow]
						(falling front proarrow in) to [out = east , looseness = 1.5 , in = west] node [fromleft] {F \tuple*{B , m}} node [toright] {G \tuple*{A , m}}
						(falling front proarrow out) ;
					\draw [name path = rising front proarrow]
						(rising front proarrow in) to [out = east , looseness = 1.5 , in = west] node [fromleft] {G \tuple*{m , B}} node [toright] {F \tuple*{m , A}}
						(rising front proarrow out) ;
					\path [name intersections = {of = falling front proarrow and rising front proarrow , by = {cell}}] ;
					\node [anchor = south] at ($ (cell) + (0 , -1/3) $) {F \tuple*{B , A}} ;
					\node [anchor = east] at ($ (cell) + (-1/6 , -1/6) $) {F \tuple*{B , B}} ;
					\node [anchor = west] at ($ (cell) + (1/6 , -1/6) $) {F \tuple*{A , A}} ;
					\node [anchor = east] at ($ (cell) + (-1/6 , 1/6) $) {G \tuple*{B , B}} ;
					\node [anchor = west] at ($ (cell) + (1/6 , 1/6) $) {G \tuple*{A , A}} ;
					\node [anchor = north] at ($ (cell) + (0 , 1/3) $) {G \tuple*{A , B}} ;
					\node [bead] (align) at (cell) {α m} ;
				\end{tikzpicture}
			\]
			where we have suppressed an arbitrary binary bracketing of the ternary boundaries.
	\end{description}

	This data is required to satisfy the following relations.
	\begin{description}
		\item[preservation of arrow composition:]
			for an object $A$ and
			consecutive arrows $f : \hom{A}{B}$ and $g :  \hom{B}{C}$
			of $\cat{C}$ we have
			\begin{equation} \label{proarrow dinatural transformation arrow composition preservation}
				α (\comp{} \, A) = \comp{} \, α A
				\quad \text{and} \quad
				α (\comp{f , g}) = \comp{α f , α g}
			\end{equation}
			\[
				\begin{tikzpicture}[string diagram , x = {(18mm , 0mm)} , y = {(0mm , -12mm)} , baseline=(align.base)]
					\coordinate (back proarrow in) at (0 , 1/2) ;
					\coordinate (back proarrow out) at (1 , 1/2) ;
					\coordinate (front arrow in) at (1/2 , 0) ;
					\coordinate (front arrow out) at (1/2 , 1) ;
					\draw [name path = back proarrow]
						(back proarrow in) to node [fromleft] {α A} node [toright] {α A}
						(back proarrow out) ;
					\draw [name path = front arrow]
						(front arrow in) to
							node [fromabove] {F \tuple*{\comp{} \, A , \comp{} \, A}} node [tobelow] {G \tuple*{\comp{} \, A , \comp{} \, A}}
						(front arrow out) ;
					\path [name intersections = {of = back proarrow and front arrow , by = {cell}}] ;
					\node [bead] (align) at (cell) {α \tuple*{\comp{} \, A}} ;
				\end{tikzpicture}
				\; = \;
				\begin{tikzpicture}[string diagram , x = {(8mm , 0mm)} , y = {(0mm , -12mm)} , baseline=(align.base)]
					\coordinate (back proarrow in) at (0 , 1/2) ;
					\coordinate (back proarrow out) at (1 , 1/2) ;
					\coordinate (front arrow in) at (1/2 , 0) ;
					\coordinate (front arrow out) at (1/2 , 1) ;
					\draw [name path = back proarrow]
						(back proarrow in) to node [fromleft] {α A} node [toright] {α A}
						(back proarrow out) ;
					\path [name path = front arrow] (front arrow in) to (front arrow out) ;
					\path [name intersections = {of = back proarrow and front arrow , by = {cell}}] ;
					\node (align) at (cell) {} ;
				\end{tikzpicture}
				\quad \text{,} \quad
				\begin{tikzpicture}[string diagram , x = {(18mm , 0mm)} , y = {(0mm , -12mm)} , baseline=(align.base)]
					\coordinate (back proarrow in) at (0 , 1/2) ;
					\coordinate (back proarrow out) at (1 , 1/2) ;
					\coordinate (front arrow in) at (1/2 , 0) ;
					\coordinate (front arrow out) at (1/2 , 1) ;
					\draw [name path = back proarrow]
						(back proarrow in) to node [fromleft] {α A} node [toright] {α C}
						(back proarrow out) ;
					\draw [name path = front arrow]
						(front arrow in) to
							node [fromabove] {F \tuple*{\comp{f , g} , \comp{f , g}}}
							node [tobelow] {G \tuple*{\comp{f , g} , \comp{f , g}}}
						(front arrow out) ;
					\path [name intersections = {of = back proarrow and front arrow , by = {cell}}] ;
					\node [bead] (align) at (cell) {α \tuple*{\comp{f , g}}} ;
				\end{tikzpicture}
				\; = \;
				\begin{tikzpicture}[string diagram , x = {(20mm , 0mm)} , y = {(0mm , -12mm)} , baseline=(align.base)]
					\coordinate (back proarrow in) at (0 , 1/2) ;
					\coordinate (back proarrow out) at (1 , 1/2) ;
					\coordinate (front arrow 1 in) at (1/4 , 0) ;
					\coordinate (front arrow 1 out) at (1/4 , 1) ;
					\coordinate (front arrow 2 in) at (3/4 , 0) ;
					\coordinate (front arrow 2 out) at (3/4 , 1) ;
					\draw [name path = back proarrow]
						(back proarrow in) to node [fromleft] {α A} coordinate [pos = 1/2] (center) node [toright] {α C}
						(back proarrow out) ;
					\draw [name path = front arrow 1]
						(front arrow 1 in) to node [fromabove] {F \tuple*{f , f} \quad} node [tobelow] {G \tuple*{f , f} \quad}
						(front arrow 1 out) ;
					\draw [name path = front arrow 2]
						(front arrow 2 in) to node [fromabove] {\quad F \tuple*{g , g}} node [tobelow] {\quad G \tuple*{g , g}}
						(front arrow 2 out) ;
					\path [name intersections = {of = back proarrow and front arrow 1 , by = {cell 1}}] ;
					\path [name intersections = {of = back proarrow and front arrow 2 , by = {cell 2}}] ;
					\node [bead] (align) at (cell 1) {α f} ;
					\node [bead] (align) at (cell 2) {α g} ;
				\end{tikzpicture}
			\]
			where the boundaries agree by the functoriality of $F$ and $G$
			and the fact that composition in a \refer{cartesian ordered pair double category} is defined factor-wise. 
		\item[compatibility with proarrow composition:]
			for an object $A$ and
			consecutive proarrows $m : \prohom{A}{B}$ and $n :  \prohom{B}{C}$
			of $\cat{C}$ we have
			\begin{equation} \label{proarrow dinatural transformation proarrow composition compatibility}
				\begin{array}{l}
					α (\procomp{} \, A) ≅ \comp{} \, α A
					\quad \text{and} \quad
					\\
					α (\procomp{m , n}) ≅
					\comp
						{
							(\procomp{F \tuple*{C , m} , α n , G \tuple*{m , C}}) ,
							(\procomp{F \tuple*{n , A} , α m , G \tuple*{A , n}})
						}
					\\
				\end{array}
			\end{equation}
			\[
				\begin{tikzpicture}[string diagram , x = {(20mm , 0mm)} , y = {(0mm , -16mm)} , baseline=(align.base)]
					\coordinate (back proarrow in) at (0 , 1/2) ;
					\coordinate (back proarrow out) at (1 , 1/2) ;
					\coordinate (falling front proarrow in) at (0 , 1/8) ;
					\coordinate (falling front proarrow out) at (1 , 7/8) ;
					\coordinate (rising front proarrow in) at (0 , 7/8) ;
					\coordinate (rising front proarrow out) at (1 , 1/8) ;
					\draw [name path = back proarrow]
						(back proarrow in) to [out = east , in = west] node [fromleft] {α A} node [toright] {α A}
						(back proarrow out) ;
					\draw [name path = falling front proarrow]
						(falling front proarrow in) to [out = east , in = west]
							node [fromleft] {F \tuple*{A , \procomp{} A}}
							node [toright] {G \tuple*{A , \procomp{} A}}
						(falling front proarrow out) ;
					\draw [name path = rising front proarrow]
						(rising front proarrow in) to [out = east , in = west]
							node [fromleft] {G \tuple*{\procomp{} A , A}}
							node [toright] {F \tuple*{\procomp{} A , A}}
						(rising front proarrow out) ;
					\path [name intersections = {of = falling front proarrow and rising front proarrow , by = {cell}}] ;
					\node [bead] (align) at (cell) {α \tuple*{\procomp{} A}} ;
				\end{tikzpicture}
				\, ≅ \,
				\begin{tikzpicture}[string diagram , x = {(16mm , 0mm)} , y = {(0mm , -16mm)} , baseline=(align.base)]
					\coordinate (back proarrow in) at (0 , 1/2) ;
					\coordinate (back proarrow out) at (1 , 1/2) ;
					\draw [name path = back proarrow]
						(back proarrow in) to node [fromleft] {α A} coordinate [pos = 1/2] (cell) node [toright] {α A}
						(back proarrow out) ;
					\node (align) at (cell) {} ;
				\end{tikzpicture}
			\]
			\[
				\begin{tikzpicture}[string diagram , x = {(20mm , 0mm)} , y = {(0mm , -16mm)} , baseline=(align.base)]
					\coordinate (back proarrow in) at (0 , 1/2) ;
					\coordinate (back proarrow out) at (1 , 1/2) ;
					\coordinate (falling front proarrow in) at (0 , 1/8) ;
					\coordinate (falling front proarrow out) at (1 , 7/8) ;
					\coordinate (rising front proarrow in) at (0 , 7/8) ;
					\coordinate (rising front proarrow out) at (1 , 1/8) ;
					\draw [name path = back proarrow]
						(back proarrow in) to [out = east , in = west] node [fromleft] {α C} node [toright] {α A}
						(back proarrow out) ;
					\draw [name path = falling front proarrow]
						(falling front proarrow in) to [out = east , in = west]
							node [fromleft] {F \tuple*{C , \procomp{m , n}}}
							node [toright] {G \tuple*{A , \procomp{m , n}}}
						(falling front proarrow out) ;
					\draw [name path = rising front proarrow]
						(rising front proarrow in) to [out = east , in = west]
							node [fromleft] {G \tuple*{\procomp{m , n} , C}}
							node [toright] {F \tuple*{\procomp{m , n} , A}}
						(rising front proarrow out) ;
					\path [name intersections = {of = falling front proarrow and rising front proarrow , by = {cell}}] ;
					\node [bead] (align) at (cell) {α \tuple*{\procomp{m , n}}} ;
				\end{tikzpicture}
				\, ≅ \,
				\begin{tikzpicture}[string diagram , x = {(32mm , 0mm)} , y = {(0mm , -24mm)} , baseline=(align.base)]
					\coordinate (back proarrow in) at (0 , 1/2) ;
					\coordinate (back proarrow out) at (1 , 1/2) ;
					\coordinate (falling front proarrow in) at (0 , 1/8) ;
					\coordinate (falling front proarrow out) at (1 , 7/8) ;
					\coordinate (rising front proarrow in) at (0 , 7/8) ;
					\coordinate (rising front proarrow out) at (1 , 1/8) ;
					\draw [name path = back proarrow]
						(back proarrow in) to [out = east , in = west] node [fromleft] {α C} node [toright] {α A}
						(back proarrow out) ;
					\draw [name path = falling front proarrow 1]
						($ (falling front proarrow in) + (0 , -1/8) $) to [out = east , out looseness = 1.25 , in looseness = 0.75 , in = west]
							node [fromleft] {F \tuple*{C , m}}
							node [toright] {G \tuple*{A , m}}
						($ (falling front proarrow out) + (0 , -1/8) $) ;
					\draw [name path = falling front proarrow 2]
						($ (falling front proarrow in) + (0 , 1/8) $) to [out = east , out looseness = 0.75 , in looseness = 1.25 , in = west]
							node [fromleft] {F \tuple*{C , n}}
							node [toright] {G \tuple*{A , n}}
						($ (falling front proarrow out) + (0 , 1/8) $) ;
					\draw [name path = rising front proarrow 1]
						($ (rising front proarrow in) + (0 , 1/8) $) to [out = east , out looseness = 1.25 , in looseness = 0.75 , in = west]
							node [fromleft] {G \tuple*{m , C}}
							node [toright] {F \tuple*{m , A}}
						($ (rising front proarrow out) + (0 , 1/8) $) ;
					\draw [name path = rising front proarrow 2]
						($ (rising front proarrow in) + (0 , -1/8) $) to [out = east , out looseness = 0.75 , in looseness = 1.25 , in = west]
							node [fromleft] {G \tuple*{n , C}}
							node [toright] {F \tuple*{n , A}}
						($ (rising front proarrow out) + (0 , -1/8) $) ;
					\path [name intersections = {of = falling front proarrow 1 and rising front proarrow 1 , by = {interchanger disk 1}}] ;
					\path [name intersections = {of = falling front proarrow 2 and rising front proarrow 2 , by = {interchanger disk 2}}] ;
					\path [name intersections = {of = falling front proarrow 1 and rising front proarrow 2 , by = {eq 1}}] ;
					\path [name intersections = {of = falling front proarrow 2 and rising front proarrow 1 , by = {eq 2}}] ;
					\node [bead] at (eq 1) {\comp{} \; F \tuple*{n , m}} ;
					\node [bead] at (eq 2) {\comp{}\; G \tuple*{m , n}} ;
					\node [bead] at (interchanger disk 1) {α m} ;
					\node [bead] at (interchanger disk 2) {α n} ;
					\node (align) at ($ (interchanger disk 1) ! 1/2 ! (interchanger disk 2) $) {} ;
				\end{tikzpicture}
			\]
			where the boundaries agree up to coherators, which we have suppressed along with the bracketing,
			by the functoriality of $F$ and $G$
			and the fact that composition in a \refer{cartesian ordered pair double category} is defined factor-wise. 
		\item[naturality for squares:]
			for a square $φ : \doublehom[(\doublehom{\prohom{A}{C}}{\prohom{B}{D}}{\hom{A}{B}}{\hom{C}{D}})]{m}{n}{f}{g}$
			of $\cat{C}$ we have
			\begin{equation} \label{proarrow dinatural transformation square naturality}
				\comp{(\procomp{F \tuple*{g , φ} , α g , G \tuple*{φ , g}}) , α n}
				\; ≅ \;
				\comp{α m , (\procomp{F \tuple*{φ , f} , α f , G \tuple*{f , φ}})}
			\end{equation}
			\[
				\begin{tikzpicture}[string diagram , x = {(30mm , 0mm)} , y = {(0mm , -20mm)} , baseline=(align.base)]
					\coordinate (back proarrow in) at (0 , 1/2) ;
					\coordinate (back proarrow out) at (1 , 1/2) ;
					\coordinate (falling front proarrow in) at (0 , 1/6) ;
					\coordinate (falling front proarrow out) at (1 , 5/6) ;
					\coordinate (rising front proarrow in) at (0 , 5/6) ;
					\coordinate (rising front proarrow out) at (1 , 1/6) ;
					\coordinate (front arrow in) at (1/4 , 0) ;
					\coordinate (front arrow out) at (1/4 , 1) ;
					\draw [name path = back proarrow]
						(back proarrow in) to [out = east , in = west] node [fromleft] {α C} node [toright] {α B}
						(back proarrow out) ;
					\draw [name path = falling front proarrow]
						(falling front proarrow in) to [out = east , in = west] node [fromleft] {F \tuple*{C , m}}
						(front arrow in |- falling front proarrow in) to [out = east , in = west] node [toright] {G \tuple*{B , n}}
						(falling front proarrow out) ;
					\draw [name path = rising front proarrow]
						(rising front proarrow in) to [out = east , in = west] node [fromleft] {G \tuple*{m , C}}
						(front arrow out |- rising front proarrow in) to [out = east , in = west] node [toright] {F \tuple*{n , B}}
						(rising front proarrow out) ;
					\draw [name path = front arrow]
						(front arrow in) to [out = south , in = north] node [fromabove] {F \tuple*{g , f}} node [tobelow] {G \tuple*{f , g}}
						(front arrow out) ;
					\path [name intersections = {of = front arrow and falling front proarrow , by = {falling functor square}}] ;
					\path [name intersections = {of = front arrow and rising front proarrow , by = {rising functor square}}] ;
					\path [name intersections = {of = front arrow and back proarrow , by = {interchanger square}}] ;
					\path [name intersections = {of = falling front proarrow and rising front proarrow , by = {interchanger disk}}] ;
					\node [bead] at (falling functor square) {F \tuple*{g , φ}} ;
					\node [bead] at (rising functor square) {G \tuple*{φ , g}} ;
					\node [bead] at (interchanger square) {α g} ;
					\node [bead] (align) at (interchanger disk) {α n} ;
				\end{tikzpicture}
				\, ≅ \,
				\begin{tikzpicture}[string diagram , x = {(30mm , 0mm)} , y = {(0mm , -20mm)} , baseline=(align.base)]
					\coordinate (back proarrow in) at (0 , 1/2) ;
					\coordinate (back proarrow out) at (1 , 1/2) ;
					\coordinate (falling front proarrow in) at (0 , 1/6) ;
					\coordinate (falling front proarrow out) at (1 , 5/6) ;
					\coordinate (rising front proarrow in) at (0 , 5/6) ;
					\coordinate (rising front proarrow out) at (1 , 1/6) ;
					\coordinate (front arrow in) at (3/4 , 0) ;
					\coordinate (front arrow out) at (3/4 , 1) ;
					\draw [name path = back proarrow]
						(back proarrow in) to [out = east , in = west] node [fromleft] {α C} node [toright] {α B}
						(back proarrow out) ;
					\draw [name path = falling front proarrow]
						(falling front proarrow in) to [out = east , in = west] node [fromleft] {F \tuple*{C , m}}
						(front arrow in |- falling front proarrow out) to [out = east , in = west] node [toright] {G \tuple*{B , n}}
						(falling front proarrow out) ;
					\draw [name path = rising front proarrow]
						(rising front proarrow in) to [out = east , in = west] node [fromleft] {G \tuple*{m , C}}
						(front arrow out |- rising front proarrow out) to [out = east , in = west] node [toright] {F \tuple*{n , B}}
						(rising front proarrow out) ;
					\draw [name path = front arrow]
						(front arrow in) to [out = south , in = north] node [fromabove] {F \tuple*{g , f}} node [tobelow] {G \tuple*{f , g}}
						(front arrow out) ;
					\path [name intersections = {of = front arrow and falling front proarrow , by = {falling functor square}}] ;
					\path [name intersections = {of = front arrow and rising front proarrow , by = {rising functor square}}] ;
					\path [name intersections = {of = front arrow and back proarrow , by = {interchanger square}}] ;
					\path [name intersections = {of = falling front proarrow and rising front proarrow , by = {interchanger disk}}] ;
					\node [bead] at (falling functor square) {G \tuple*{f , φ}} ;
					\node [bead] at (rising functor square) {F \tuple*{φ , f}} ;
					\node [bead] at (interchanger square) {α f} ;
					\node [bead] (align) at (interchanger disk) {α m} ;
				\end{tikzpicture}
			\]
			again suppressing the proarrow bracketing and coherators.
	\end{description}
\end{definition}

Note how for the proarrow-component disks we factor the proarrow
$\tuple*{m , m} : \prohom[(\product{\co{\cat{C}},  \cat{C}})]{\tuple*{B , A}}{\tuple*{A , B}}$
to compose first the covariant $m$ and then the contravariant $m$ in one boundary,
and the other way around in the other boundary,
leaving in between equal objects to serve as a single index to the transformation.

To support the claim that this definition of dinatural transformation for double categories is reasonable,
we observe that it reduces to Vidal and Tur's $2$-categorical definition \cite{vidal-2010-2_categorical_institution}
for double categories in which all arrows are trivial\footnote
{modulo composition strictness and the apparent omission of composition compatibility laws like
 \eqref{proarrow dinatural transformation proarrow composition compatibility}},
and to Dubuc and Street's $1$-categorical definition \cite{street-1970-dinatural_transformations}
when, additionally, all proarrow disks are trivial.

\begin{remark}[oriented and strict dinatural transformations]
	In the preceding definition we have stipulated that the proarrow-component disks be \refer[invertible disk]{invertible}.
	If we drop this requirement then we obtain oriented dinatural transformations.
	In particular, with proarrow-component disks oriented in the direction that we have defined them
	we obtain an \define{oplax dinatural transformation},
	and for the reverse direction a \define{lax dinatural transformation}.
	Alternatively, we can strengthen this stipulation to require that the proarrow-component disks be \refer[identity disk]{identities},
	in which case we obtain a \define{strict dinatural transformation}.
\end{remark}

\begin{remark}[arrow-dimension dinatural transformations]
	One might suppose that an \define{arrow-dimension dinatural transformation}
	$\hom<dinatural>{F}{G}$
	should be (modulo composition strictness) the reflection of its proarrow-dimension counterpart
	about the $\tuple*{\comp{,} , \procomp{,}}$-diagonal, as in the natural case (remark \ref{arrow-dimension natural transformations}).
	However, such a definition makes sense only if the diagonal reflection is applied to the \refer{dual pairing} as well,
	giving $\hom<dinatural>[\hom[]{\product{\op{\cat{C}} , \cat{C}}}{\cat{D}}]{F}{G}$,
	where $\op{\cat{C}}$ is the \define{arrow-dimension reflection} of $\cat{C}$.
\end{remark}

Comparing the structure of a dinatural transformation with that of a natural transformation of difunctors,
we see that the former is defined only ``on the diagonal'',
in the sense that components have a single index shared by the dual pairing,
whereas components of the latter have two independent indices.
As in the $1$-categorical setting \cite{street-1970-dinatural_transformations},
by unifying these indices we can construct a dinatural transformation from a natural one.

\begin{proposition}[natural transformation diagonalization] \label{theorem: transformation diagonalization}
	For each \refer{natural transformation} of \refer[difunctor]{difunctors}
	$α : \prohom[\hom[]{\product{\co{\cat{C}} , \cat{C}}}{\cat{D}}]{F}{G}$
	there is a parallel \refer{dinatural transformation}
	$\di{α} : \prohom<dinatural>[\hom[]{\product{\co{\cat{C}} , \cat{C}}}{\cat{D}}]{F}{G}$,
	which we call its \define[transformation diagonalization]{diagonalization}, with
	\refer[proarrow dinatural transformation object-component proarrow]{object-component proarrows} $\di{α} A ≔ α \tuple*{A , A}$,
	\refer[proarrow dinatural transformation arrow-component square]{arrow-component squares} $\di{α} f ≔ α \tuple*{f , f}$,
	and for $m : \prohom[\cat{C}]{A}{B}$
	\refer[proarrow dinatural transformation proarrow-component disk]{proarrow-component disks}
	\[
		\makebox[\textwidth][c]{$ 
		\begin{tikzpicture}[string diagram , x = {(12mm , 0mm)} , y = {(0mm , -12mm)} , baseline=(align.base)]
			\coordinate (back proarrow in) at (0 , 1/2) ;
			\coordinate (back proarrow out) at (1 , 1/2) ;
			\coordinate (falling front proarrow in) at (0 , 1/8) ;
			\coordinate (falling front proarrow out) at (1 , 7/8) ;
			\coordinate (rising front proarrow in) at (0 , 7/8) ;
			\coordinate (rising front proarrow out) at (1 , 1/8) ;
			\draw [name path = back proarrow]
				(back proarrow in) to [out = east , in = west] node [fromleft] {\di{α} B} node [toright] {\di{α} A}
				(back proarrow out) ;
			\draw [name path = falling front proarrow]
				(falling front proarrow in) to [out = east , looseness = 1.5 , in = west] node [fromleft] {F \tuple*{B , m}} node [toright] {G \tuple*{A , m}}
				(falling front proarrow out) ;
			\draw [name path = rising front proarrow]
				(rising front proarrow in) to [out = east , looseness = 1.5 , in = west] node [fromleft] {G \tuple*{m , B}} node [toright] {F \tuple*{m , A}}
				(rising front proarrow out) ;
			\path [name intersections = {of = falling front proarrow and rising front proarrow , by = {cell}}] ;
			\node [bead] (align) at (cell) {\di{α} m} ;
		\end{tikzpicture}
		\hspace{-2mm} :≅
		\begin{tikzpicture}[string diagram , x = {(32mm , 0mm)} , y = {(0mm , -20mm)} , baseline=(align.base)]
			\coordinate (back proarrow in) at (0 , 1/2) ;
			\coordinate (back proarrow out) at (1 , 1/2) ;
			\coordinate (falling front proarrow in) at (0 , 1/8) ;
			\coordinate (falling front proarrow out) at (1 , 7/8) ;
			\coordinate (rising front proarrow in) at (0 , 7/8) ;
			\coordinate (rising front proarrow out) at (1 , 1/8) ;
			\draw [name path = back proarrow]
				(back proarrow in) to [out = east , in = west] node [fromleft] {α \tuple*{B , B}}
				($ (falling front proarrow in) ! 1/2 ! (rising front proarrow out) $) to [out = east , in = west] node [toright] {α \tuple*{A , A}}
				(back proarrow out) ;
			\draw [name path = falling front proarrow]
				(falling front proarrow in) to [out = east , out looseness = 0.75 , in looseness = 1.25 , in = west]
					node [fromleft] {F \tuple*{B , m}} node [toright] {G \tuple*{A , m}}
				(falling front proarrow out) ;
			\draw [name path = rising front proarrow]
				(rising front proarrow in) to [out = east , out looseness = 1.25 , in looseness = 0.75 , in = west]
					node [fromleft] {G \tuple*{m , B}} node [toright] {F \tuple*{m , A}}
				(rising front proarrow out) ;
			\path [name intersections = {of = falling front proarrow and back proarrow , by = {falling interchanger}}] ;
			\path [name intersections = {of = rising front proarrow and back proarrow , by = {rising interchanger}}] ;
			\path [name intersections = {of = falling front proarrow and rising front proarrow , by = {product interchanger}}] ;
			\node [bead] at (falling interchanger) {α \tuple*{B , m}} ;
			\node [bead] at (rising interchanger) {\inv{α \tuple*{m , A}}} ;
			\node [bead] at (product interchanger) {\comp{} \, G \tuple*{m , m}} ;
			\node (align) at (back proarrow out) {} ;
		\end{tikzpicture}
		\overset{\eqref{natural transformation product yang baxter}}{≅}
		\begin{tikzpicture}[string diagram , x = {(32mm , 0mm)} , y = {(0mm , -20mm)} , baseline=(align.base)]
			\coordinate (back proarrow in) at (0 , 1/2) ;
			\coordinate (back proarrow out) at (1 , 1/2) ;
			\coordinate (falling front proarrow in) at (0 , 1/8) ;
			\coordinate (falling front proarrow out) at (1 , 7/8) ;
			\coordinate (rising front proarrow in) at (0 , 7/8) ;
			\coordinate (rising front proarrow out) at (1 , 1/8) ;
			\draw [name path = back proarrow]
				(back proarrow in) to [out = east , in = west] node [fromleft] {α \tuple*{B , B}}
				($ (rising front proarrow in) ! 1/2 ! (falling front proarrow out) $) to [out = east , in = west] node [toright] {α \tuple*{A , A}}
				(back proarrow out) ;
			\draw [name path = falling front proarrow]
				(falling front proarrow in) to [out = east , out looseness = 1.25 , in looseness = 0.75 , in = west]
					node [fromleft] {F \tuple*{B , m}} node [toright] {G \tuple*{A , m}}
				(falling front proarrow out) ;
			\draw [name path = rising front proarrow]
				(rising front proarrow in) to [out = east , out looseness = 0.75 , in looseness = 1.25 , in = west]
					node [fromleft] {G \tuple*{m , B}} node [toright] {F \tuple*{m , A}}
				(rising front proarrow out) ;
			\path [name intersections = {of = falling front proarrow and back proarrow , by = {falling interchanger}}] ;
			\path [name intersections = {of = rising front proarrow and back proarrow , by = {rising interchanger}}] ;
			\path [name intersections = {of = falling front proarrow and rising front proarrow , by = {product interchanger}}] ;
			\node [bead] at (rising interchanger) {\inv{α \tuple*{m , B}}} ;
			\node [bead] at (falling interchanger) {α \tuple*{A , m}} ;
			\node [bead] at (product interchanger) {\comp{} \, F \tuple*{m , m}} ;
			\node (align) at (back proarrow out) {} ;
		\end{tikzpicture}
		\tag*{\hyperlink{proof: transformation diagonalization}{\qedsymbol}}
	$} 
	\]
\end{proposition}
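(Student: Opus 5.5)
We need to check that the proposed data for $\di{α}$ is well-typed and satisfies the three relations in the definition of dinatural transformation. We use only the corresponding laws of the natural transformation $α$ (recalled in appendix \ref{section: double categories ctd}), the functoriality of $F$ and $G$, and the fact that structure in $\product{\co{\cat{C}} , \cat{C}}$ is defined factor-wise.

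\emph{Well-typedness and invertibility.} The object components $α \tuple*{A , A}$ have the right boundary. The arrow components $α \tuple*{f , f}$ are squares with frontier $F \tuple*{f , f}$ and $G \tuple*{f , f}$. For the disk $\di{α} m$, we factor $\tuple*{m , m}$ in the dual pairing as $\procomp{\tuple*{B , m} , \tuple*{m , A}}$ on one side and $\procomp{\tuple*{m , B} , \tuple*{A , m}}$ on the other. The defining composite then pastes three invertible disks: the proarrow components $α \tuple*{B , m}$ and $\inv{α \tuple*{m , A}}$, and the identity disk on $G \tuple*{m , m}$ used to reorder the factorization. Hence $\di{α} m$ is invertible. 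The second equality displayed in the statement is just Lemma \ref{theorem: natural transformation product yang baxter} applied with $\cat{B} = \co{\cat{C}}$ and $p = m$, so either form may be used as convenient.

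\emph{Arrow composition.} This is immediate, since $\tuple*{\comp{f , g} , \comp{f , g}} = \comp{\tuple*{f , f} , \tuple*{g , g}}$ and $α$ preserves arrow composition and identities.

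\emph{Proarrow composition.} For units, $\tuple*{\procomp{} A , A}$ and $\tuple*{A , \procomp{} A}$ are proarrow units, so $α$'s unit compatibility makes both components identities up to coherators. For $\procomp{m , n}$ we proceed as follows.
\begin{enumerate}
\item Split $α \tuple*{C , \procomp{m , n}}$ and $α \tuple*{\procomp{m , n} , A}$ using $α$'s compatibility with proarrow composition in each variable.
\item This leaves a pasting of four $α$-disks and two $G$-identity crossings.
\item Use Lemma \ref{theorem: natural transformation product yang baxter} to slide the middle $α$-disks $α \tuple*{C , n}$ or $α \tuple*{m , A}$ past a crossing of $m$ and $n$ strands, converting a $G$-crossing into an $F$-crossing.
\item Insert the cancelling pair $\comp{α \tuple*{B , B} \text{-strand}}$, i.e. $\inv{α \tuple*{m , B}}$ followed by $α \tuple*{m , B}$, so that the diagram regroups into $\di{α} n$ and $\di{α} m$, each whiskered by $F$ and $G$ proarrows, as in the right-hand side of \eqref{proarrow dinatural transformation proarrow composition compatibility}.
\end{enumerate}
This bookkeeping is where I expect the main obstacle: tracking which crossings are $F$ versus $G$ identity disks, and confirming that the suppressed associators agree. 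I would do it in string diagrams, using interchange to move beads freely.

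\emph{Naturality for squares.} Given $φ$, we apply $α$'s naturality for squares twice, once to $\tuple*{g , φ}$ and once to $\tuple*{φ , f}$. We also use the naturality of the identity crossings $\comp{} \, G \tuple*{m , m}$ against the functor squares $G \tuple*{φ , φ}$, which factor by functoriality as composites of $G \tuple*{g , φ}$, $G \tuple*{φ , f}$, and so on. Concretely:
\begin{enumerate}
\item Push the arrow strand $G \tuple*{f , g}$, carrying $α \tuple*{g , g}$, through $α \tuple*{C , m}$ and $\inv{α \tuple*{m , A}}$.
\item Each such passage turns a disk $α \tuple*{\cdot , n}$ into $α \tuple*{\cdot , m}$ via the square axiom for $α$ at $\tuple*{g , φ}$ and its inverse form at $\tuple*{φ , f}$.
\item Slide past the $G$-crossing by functoriality of $G$.
\end{enumerate}
The resulting pasting is exactly \eqref{proarrow dinatural transformation square naturality}.
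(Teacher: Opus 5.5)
Your route matches the paper's: every law for $\di{\alpha}$ is derived from the corresponding law for $\alpha$, with Lemma~\ref{theorem: natural transformation product yang baxter} as the only extra ingredient. Your treatment of units, arrow composition and square naturality is what the paper does: two uses of $\alpha$'s square naturality, at $\tuple*{g , \varphi}$ and inverted at $\tuple*{\varphi , f}$, glued by functoriality of $G$. The binary proarrow-composition case, however, does not go through as your steps 3--4 describe it.

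After splitting with $\alpha$'s composition compatibility in each variable, note that in $\co{\cat{C}}$ the proarrow $\tuple*{\procomp{m , n} , A}$ factors as $\tuple*{n , A}$ followed by $\tuple*{m , A}$, since the order is reversed. The pasting then consists, in arrow-composition order, of $\alpha \tuple*{C , n}$, $\alpha \tuple*{C , m}$, four $G$-crossings, $\inv{\alpha \tuple*{n , A}}$, $\inv{\alpha \tuple*{m , A}}$. The back strand runs through $\alpha \tuple*{C , C}$, $\alpha \tuple*{C , B}$, $\alpha \tuple*{C , A}$, $\alpha \tuple*{B , A}$, $\alpha \tuple*{A , A}$. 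The outer disks $\alpha \tuple*{C , n}$ and $\inv{\alpha \tuple*{m , A}}$ are already the outer halves of $\di{\alpha} n$ and $\di{\alpha} m$, so they are not the ones to slide. The lemma cannot be applied to them anyway, because the partner disk it would require does not occur in the pasting.

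What is needed is a single application of the lemma, with $X = C$, $Y = B$, $p = n$, to the inner pair $\alpha \tuple*{C , m}$, $\inv{\alpha \tuple*{n , A}}$. This replaces them by $\inv{\alpha \tuple*{n , B}}$, $\alpha \tuple*{B , m}$, and reroutes the back strand through $\alpha \tuple*{B , B}$ instead of $\alpha \tuple*{C , A}$. It also trades the crossing $\comp{} \, G \tuple*{n , m}$ for $\comp{} \, F \tuple*{n , m}$. The result regroups at once into the right-hand side of \eqref{proarrow dinatural transformation proarrow composition compatibility}: $\comp{} \, G \tuple*{n , n}$ and $\comp{} \, G \tuple*{m , m}$ are absorbed into $\di{\alpha} n$ and $\di{\alpha} m$, and $\comp{} \, G \tuple*{m , n}$ stays in the middle.

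Your step 4 is therefore unnecessary, and its cancelling pair does not fit the diagram. The disk $\alpha \tuple*{m , B}$ relates $\alpha \tuple*{A , B}$ to $\alpha \tuple*{B , B}$, and $\alpha \tuple*{A , B}$ never occurs on the back strand. The insertion and cancellation of inverse pairs is already packaged inside the proof of the lemma.

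Finally, the factorizations of $\tuple*{m , m}$ in your well-typedness paragraph are mis-paired, and neither composite you wrote is defined. They should be $\procomp{\tuple*{B , m} , \tuple*{m , B}}$ (through $\tuple*{B , B}$) and $\procomp{\tuple*{m , A} , \tuple*{A , m}}$ (through $\tuple*{A , A}$).
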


\begin{corollary}[dummy dinatural transformations] \label{theorem: dummy dinatural transformations}
	For each \refer{natural transformation}
	$α : \prohom[\hom[]{\cat{C}}{\cat{D}}]{F}{G}$
	there is a \refer{dinatural transformation} of \refer[dummy functor]{dummy difunctors}
	$\di{\dummy{α}} : \prohom<dinatural>[\hom[]{\product{\co{\cat{C}} , \cat{C}}}{\cat{D}}]{\dummy{F}}{\dummy{G}}$,
	which we call a \define{dummy dinatural transformation}, with
	components
	$\di{\dummy{α}} A ≔ α A$,
	\[
		\begin{tikzpicture}[string diagram , x = {(16mm , 0mm)} , y = {(0mm , -12mm)} , baseline=(align.base)]
			\coordinate (front in) at (1/2 , 0) ;
			\coordinate (front out) at (1/2 , 1) ;
			\coordinate (back in) at (0 , 1/2) ;
			\coordinate (back out) at (1 , 1/2) ;
			\draw [name path = back] (back in) to node [fromleft] {\di{\dummy{α}} A} node [toright] {\di{\dummy{α}} B} (back out) ;
			\draw [name path = front] (front in) to node [fromabove] {\dummy{F} \tuple*{f , f}} node [tobelow] {\dummy{G} \tuple*{f , f}} (front out) ;
			\path [name intersections = {of = back and front , by = {cell}}] ;
			\node at ($ (cell) + (-1/2 , -1/3) $) {\dummy{F} \tuple*{A , A}} ;
			\node at ($ (cell) + (1/2 , -1/3) $) {\dummy{F} \tuple*{B , B}} ;
			\node at ($ (cell) + (-1/2 , 1/3) $) {\dummy{G} \tuple*{A , A}} ;
			\node at ($ (cell) + (1/2 , 1/3) $) {\dummy{G} \tuple*{B , B}} ;
			\node [bead] (align) at (cell) {\di{\dummy{α}} f} ;
		\end{tikzpicture}
		\; ≔ \;
		\begin{tikzpicture}[string diagram , x = {(16mm , 0mm)} , y = {(0mm , -12mm)} , baseline=(align.base)]
			\coordinate (front in) at (1/2 , 0) ;
			\coordinate (front out) at (1/2 , 1) ;
			\coordinate (back in) at (0 , 1/2) ;
			\coordinate (back out) at (1 , 1/2) ;
			\draw [name path = back] (back in) to node [fromleft] {α A} node [toright] {α B} (back out) ;
			\draw [name path = front] (front in) to node [fromabove] {F f} node [tobelow] {G f} (front out) ;
			\path [name intersections = {of = back and front , by = {cell}}] ;
			\node at ($ (cell) + (-1/3 , -1/3) $) {F A} ;
			\node at ($ (cell) + (1/3 , -1/3) $) {F B} ;
			\node at ($ (cell) + (-1/3 , 1/3) $) {G A} ;
			\node at ($ (cell) + (1/3 , 1/3) $) {G B} ;
			\node [bead] (align) at (cell) {α f} ;
		\end{tikzpicture}
		\; \; \text{and}
		\begin{tikzpicture}[string diagram , x = {(24mm , 0mm)} , y = {(0mm , -20mm)} , baseline=(align.base)]
			\coordinate (falling front proarrow in) at (0 , 1/8) ;
			\coordinate (falling front proarrow out) at (1 , 7/8) ;
			\coordinate (rising front proarrow in) at (0 , 7/8) ;
			\coordinate (rising front proarrow out) at (1 , 1/8) ;
			\coordinate (back proarrow in) at (0 , 1/2) ;
			\coordinate (back proarrow out) at (1 , 1/2) ;
			\draw [name path = back proarrow]
				(back proarrow in) to [out = east , in = west] node [fromleft] {\di{\dummy{α}} B} node [toright] {\di{\dummy{α}} A}
				(back proarrow out) ;
			\draw [name path = falling front proarrow]
				(falling front proarrow in) to [out = east , looseness = 1.5 , in = west]
					node [fromleft] {\dummy{F} \tuple*{B , m}} node [toright] {\dummy{G} \tuple*{A , m}}
				(falling front proarrow out) ;
			\draw [name path = rising front proarrow]
				(rising front proarrow in) to [out = east , looseness = 1.5 , in = west]
					node [fromleft] {\dummy{G} \tuple*{m , B}} node [toright] {\dummy{F} \tuple*{m , A}}
				(rising front proarrow out) ;
			\path [name intersections = {of = falling front proarrow and rising front proarrow , by = {cell}}] ;
			\node [anchor = south] at ($ (cell) + (0 , -1/3) $) {\dummy{F} \tuple*{B , A}} ;
			\node [anchor = east] at ($ (cell) + (-1/6 , -1/6) $) {\dummy{F} \tuple*{B , B}} ;
			\node [anchor = west] at ($ (cell) + (1/6 , -1/6) $) {\dummy{F} \tuple*{A , A}} ;
			\node [anchor = east] at ($ (cell) + (-1/6 , 1/6) $) {\dummy{G} \tuple*{B , B}} ;
			\node [anchor = west] at ($ (cell) + (1/6 , 1/6) $) {\dummy{G} \tuple*{A , A}} ;
			\node [anchor = north] at ($ (cell) + (0 , 1/3) $) {\dummy{G} \tuple*{A , B}} ;
			\node [bead] (align) at (cell) {\di{\dummy{α}} m} ;
		\end{tikzpicture}
		\hspace{-2mm} :≅ \,
		\begin{tikzpicture}[string diagram , x = {(16mm , 0mm)} , y = {(0mm , -16mm)} , baseline=(align.base)]
			\coordinate (front in) at (0 , 1/6) ;
			\coordinate (front out) at (1 , 5/6) ;
			\coordinate (back in) at (0 , 5/6) ;
			\coordinate (back out) at (1 , 1/6) ;
			\draw [name path = back] (back in) to [out = east , in = west] node [fromleft] {α B} node [toright] {α A} (back out) ;
			\draw [name path = front] (front in) to [out = east , in = west] node [fromleft] {F m} node [toright] {G m} (front out) ;
			\path [name intersections = {of = back and front , by = {cell}}] ;
			\node [anchor = south] at ($ (cell) + (0 , -1/4) $) {F A} ;
			\node [anchor = east] at ($ (cell) + (-1/3 , 0) $) {F B} ;
			\node [anchor = west] at ($ (cell) + (1/3 , 0) $) {G A} ;
			\node [anchor = north] at ($ (cell) + (0 , 1/4) $) {G B} ;
			\node [bead] (align) at (cell) {α m} ;
		\end{tikzpicture}
		\tag*{\hyperlink{proof: dummy dinatural transformations}{\qedsymbol}}
	\]
\end{corollary}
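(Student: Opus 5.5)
The plan is to obtain the corollary by composing two earlier results, with no direct verification of the dinatural transformation axioms. First, Lemma \ref{theorem: dummy natural transformations}, applied with $\cat{B} ≔ \co{\cat{C}}$, turns $α : \prohom{F}{G}$ into a natural transformation of dummy difunctors $\dummy{α} : \prohom[\hom[]{\product{\co{\cat{C}} , \cat{C}}}{\cat{D}}]{\dummy{F}}{\dummy{G}}$. Its components are $\dummy{α} \tuple*{X , A} = α A$, $\dummy{α} \tuple*{i , f} = α f$, and $\dummy{α} \tuple*{p , m} = α m$. Second, Proposition \ref{theorem: transformation diagonalization} diagonalizes this natural transformation of difunctors into a dinatural transformation $\di{\dummy{α}} : \prohom<dinatural>{\dummy{F}}{\dummy{G}}$. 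All the axioms (arrow composition preservation, proarrow composition compatibility, naturality for squares) then come for free from that proposition, and invertibility of the proarrow-component disks is part of its conclusion.

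What remains is to compute the components and check that they agree with those displayed in the statement. On objects, $\di{\dummy{α}} A = \dummy{α} \tuple*{A , A} = α A$. On arrows, $\di{\dummy{α}} f = \dummy{α} \tuple*{f , f} = α f$. Both boundaries match because $\dummy{F} \tuple*{f , f} = F f$ and $\dummy{G} \tuple*{f , f} = G f$.

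The only step needing care is the proarrow-component disk. By the diagonalization formula, for $m : \prohom{A}{B}$,
\[
\di{\dummy{α}} m ≅ \comp{(\procomp{\dummy{α} \tuple*{B , m} , \dummy{G} \tuple*{m , B}}) , (\procomp{\inv{\dummy{α} \tuple*{m , A}} , \dummy{G} \tuple*{A , m}}) , \ldots}.
\]
For the dummy transformation we have $\dummy{α} \tuple*{B , m} = α m$. Also $\dummy{α} \tuple*{m , A} = α (\procomp{} A)$, which for a natural transformation is the unit coherator, so its inverse is an identity disk up to suppressed coherators. The contravariant strands $\dummy{G} \tuple*{m , B} = G B$ and $\dummy{F} \tuple*{m , A} = F A$ are proarrow units, and the crossing $\comp{} \, \dummy{G} \tuple*{m , m} = \comp{} \, G m$ is an identity disk.

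Erasing the unit strands via the unitor coherators therefore collapses the three-bead diagram to the single crossing $α m : \hom{\procomp{F m , α A}}{\procomp{α B , G m}}$, which is the right-hand side in the statement. This is the expected obstacle: one must justify that the ternary boundaries $\procomp{F \tuple*{B , m}, α B, G \tuple*{m , B}}$ reduce to $\procomp{F m , α B}$ and similar, using only suppressed unitors, and that $\dummy{α} \tuple*{\procomp{} , m}$ indeed equals $α m$ with trivial compatibility disks. I would handle this by invoking the unit clause of the natural transformation axioms in the appendix, $α(\procomp{}A) ≅ \comp{} \, αA$, so that every auxiliary bead is an identity modulo $≅$.
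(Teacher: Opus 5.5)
Your proposal is correct and matches the paper's own proof, which simply diagonalizes (Proposition \ref{theorem: transformation diagonalization}) the dummy natural transformation $\dummy{\alpha}$ of Lemma \ref{theorem: dummy natural transformations}; your unit-strand bookkeeping just spells out the identification of components that the paper leaves implicit. One small slip: for a proarrow $m$ from $A$ to $B$, the resulting disk is $\alpha m : F m \odot \alpha B \to \alpha A \odot G m$, not the version with $\alpha A$ and $\alpha B$ swapped that you wrote.
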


We cannot hope to invert the operation of diagonalizing a natural transformation of difunctors to a dinatural transformation
because it forgets the information about what happens ``off the diagonal''.
But we can invert the creation of a dummy dinatural transformation
because requiring the difunctors to be dummy in the same factor
reduces the laws for a dinatural transformation to those for a natural transformation.
Thus, as in the $1$-categorical setting \cite{street-1970-dinatural_transformations},
a dinatural transformation of dummy difunctors ``is'' a natural transformation.

\begin{proposition} [dummy dinatural is natural] \label{theorem: transformation dummy dinatural is natural}
	The \refer{dummy dinatural transformation} construction determines a bijection between
	arbitrary \refer[natural transformation]{natural transformations}
	and \refer[dinatural transformation]{dinatural transformations}
	of \refer[dummy functor]{dummy difunctors}:
	\[
		\prohom[\hom[]{\cat{C}}{\cat{D}}]{F}{G}
		\quad ≅ \quad
		\prohom<dinatural>[\hom[]{\product{\co{\cat{C}} , \cat{C}}}{\cat{D}}]{\dummy{F}}{\dummy{G}}
		\tag*{\hyperlink{proof: transformation dummy dinatural is natural}{\qedsymbol}}
	\]
\end{proposition}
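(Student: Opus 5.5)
The plan is to construct an explicit inverse to the dummy dinatural construction of Corollary~\ref{theorem: dummy dinatural transformations} and to check that the two assignments are mutually inverse on components. By that corollary we already have a well-defined function sending $α : \prohom{F}{G}$ to $\di{\dummy{α}} : \prohom<dinatural>{\dummy{F}}{\dummy{G}}$. It remains to go backwards.

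The key observation is that, since $\dummy{F}$ and $\dummy{G}$ ignore the contravariant $\co{\cat{C}}$ factor, every appearance of a contravariant index in the dinatural data becomes an identity. For a $\cat{C}$-proarrow $m : \prohom{A}{B}$ we have $\dummy{F} \tuple*{B , m} = F m$ and $\dummy{G} \tuple*{A , m} = G m$, while $\dummy{G} \tuple*{m , B} = \procomp{} \, G B$ and $\dummy{F} \tuple*{m , A} = \procomp{} \, F A$. Likewise, for squares, $\dummy{F} \tuple*{g , φ} = F φ$, whereas $\dummy{G} \tuple*{φ , g}$ and $\dummy{F} \tuple*{φ , f}$ are identity squares on units. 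So, given a dinatural $β : \prohom<dinatural>{\dummy{F}}{\dummy{G}}$, I would define $α$ as follows:
\begin{itemize}
\item object components $α A ≔ β A$;
\item arrow components $α f ≔ β f$, whose boundary $\dummy{F} \tuple*{f , f} = F f$ is already correct;
\item proarrow components $α m$, obtained from $β m$ by composing with the unitor isomorphisms that delete the unit proarrows $\procomp{} \, G B$ and $\procomp{} \, F A$. This yields an invertible disk with the boundary required in appendix~\ref{section: double categories ctd}.
\end{itemize}

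Next I would verify the natural transformation laws one by one, each as the dummy specialization of a dinatural law.
\begin{itemize}
\item Preservation of arrow composition is \eqref{proarrow dinatural transformation arrow composition preservation} verbatim.
\item Compatibility with proarrow composition comes from \eqref{proarrow dinatural transformation proarrow composition compatibility}. The outer factors $F \tuple*{C , m}$, $G \tuple*{m , C}$, $F \tuple*{n , A}$ and $G \tuple*{A , n}$ become $F m$, a unit, a unit and $G n$ respectively. The interchanger disks $\comp{} \, F \tuple*{n , m}$ and $\comp{} \, G \tuple*{m , n}$ involve only one nontrivial strand. After cancelling units, the right-hand side therefore collapses to the usual pasting $\comp{(\procomp{α m , G n}) , (\procomp{F m , α n})}$.
\item Naturality for squares comes from \eqref{proarrow dinatural transformation square naturality}, which reduces in the same way to the square naturality law for $α$.
\end{itemize}
Each of these steps holds only ``$≅$'', i.e.\ modulo the suppressed unitors and associators. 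Its justification is that unitors are natural with respect to squares, together with the coherence theorem for weak double categories.

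Finally I would check bijectivity.
\begin{itemize}
\item Starting from $α$, forming $\di{\dummy{α}}$ and then stripping units returns $α$ on the nose: the object and arrow components are literally unchanged, and the proarrow components differ only by a unitor followed by its inverse.
\item Conversely, starting from $β$, the data of $\di{\dummy{α}}$ agree with those of $β$ component-wise, again with the unitors cancelling.
\end{itemize}
Since both dinatural and natural transformations are determined by their components, the two maps are mutually inverse.

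I expect the main obstacle to be the compatibility-with-proarrow-composition step. There the ternary bracketings and the unit-cancellation must be tracked carefully to see that the dinatural formula genuinely becomes the natural one rather than something differing by a nontrivial coherator. I would handle it by first passing to a strict replacement via the cited coherence theorem, where units are strict and the reduction is a direct string-diagram identity, and then transporting the result back.
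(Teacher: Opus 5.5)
Your proposal is correct and is essentially the paper's own proof. The paper likewise inverts the construction by keeping the object and arrow components and taking each proarrow component to be the dinatural disk, up to the unitors that delete the unit proarrows produced by the dummy difunctors. It derives each natural-transformation law as the dummy specialization of its dinatural counterpart, with binary proarrow composition and square naturality as the substantive cases, and regards the two round trips as immediate; it works throughout up to suppressed coherators, so your strict-replacement detour is unnecessary.

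One slip: composition is written left to right, so the collapsed composite in your proarrow-composition step should be $\comp{(\procomp{F m , α n}) , ({\procomp{α m , G n}})}$, as in \eqref{proarrow natural transformation proarrow composition compatibility}. The order you wrote is ill-typed.
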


Diagonalizing an \refer{identity natural transformation}
$\procomp{} \, F : \prohom[\hom[]{\product{\co{\cat{C}} , \cat{C}}}{\cat{D}}]{F}{F}$
yields a dinatural transformation
$\di{\procomp{}} \, F : \prohom<dinatural>[\hom[]{\product{\co{\cat{C}} , \cat{C}}}{\cat{D}}]{F}{F}$
with
\refer[proarrow dinatural transformation object-component proarrow]{object components}
$(\di{\procomp{}} \, F) \, A = \procomp{} \, F \tuple*{A , A}$,
\refer[proarrow dinatural transformation arrow-component square]{arrow components}
$(\di{\procomp{}} \, F) \, f = \procomp{} \, F \tuple*{f , f}$,
and for $m : \prohom{A}{B}$
\refer[proarrow dinatural transformation proarrow-component disk]{proarrow components}
\[
	\begin{tikzpicture}[string diagram , x = {(20mm , 0mm)} , y = {(0mm , -16mm)} , baseline=(align.base)]
		\coordinate (falling front proarrow in) at (0 , 1/8) ;
		\coordinate (falling front proarrow out) at (1 , 7/8) ;
		\coordinate (rising front proarrow in) at (0 , 7/8) ;
		\coordinate (rising front proarrow out) at (1 , 1/8) ;
		\coordinate (back proarrow in) at (0 , 1/2) ;
		\coordinate (back proarrow out) at (1 , 1/2) ;
		\draw [name path = back proarrow]
			(back proarrow in) to [out = east , in = west] node [fromleft] {(\di{\procomp{}} \, F) B} node [toright] {(\di{\procomp{}} \, F) A}
			(back proarrow out) ;
		\draw [name path = falling front proarrow]
			(falling front proarrow in) to [out = east , looseness = 1.5 , in = west] node [fromleft] {F \tuple*{B , m}} node [toright] {F \tuple*{A , m}}
			(falling front proarrow out) ;
		\draw [name path = rising front proarrow]
			(rising front proarrow in) to [out = east , looseness = 1.5 , in = west] node [fromleft] {F \tuple*{m , B}} node [toright] {F \tuple*{m , A}}
			(rising front proarrow out) ;
		\path [name intersections = {of = falling front proarrow and rising front proarrow , by = {cell}}] ;
		\node [bead] (align) at (cell) {(\di{\procomp{}} \, F) \, m} ;
	\end{tikzpicture}
	\; ≅ \;
	\begin{tikzpicture}[string diagram , x = {(20mm , 0mm)} , y = {(0mm , -16mm)} , baseline=(align.base)]
		\coordinate (back proarrow in) at (0 , 1/2) ;
		\coordinate (back proarrow out) at (1 , 1/2) ;
		\draw [name path = back proarrow]
			(back proarrow in) to [out = east , in = west] node [fromleft] {F \tuple*{m , m}} coordinate [pos = 1/2] (cell) node [toright] {F \tuple*{m , m}}
			(back proarrow out) ;
		\node (align) at (cell) {} ;
	\end{tikzpicture}
\]
Diagonalizing a \refer{composite natural transformation}
$\procomp{α , β} : \prohom[\hom[]{\product{\co{\cat{C}} , \cat{C}}}{\cat{D}}]{F}{H}$
yields a dinatural transformation
$α \di{\procomp{,}} β : \prohom<dinatural>[\hom[]{\product{\co{\cat{C}} , \cat{C}}}{\cat{D}}]{F}{H}$
with
\refer[proarrow dinatural transformation object-component proarrow]{object components}
$(α \di{\procomp{,}} β) \, A = \procomp{α \tuple*{A , A} , β \tuple*{A , A}}$,
\refer[proarrow dinatural transformation arrow-component square]{arrow components}
$(α \di{\procomp{,}} β) \, f = \procomp{α \tuple*{f , f} , β \tuple*{f , f}}$,
and for $m : \prohom[\cat{C}]{A}{B}$
\refer[proarrow dinatural transformation proarrow-component disk]{proarrow components}
\[
	\begin{tikzpicture}[string diagram , x = {(20mm , 0mm)} , y = {(0mm , -16mm)} , baseline=(align.base)]
		\coordinate (falling front proarrow in) at (0 , 1/8) ;
		\coordinate (falling front proarrow out) at (1 , 7/8) ;
		\coordinate (rising front proarrow in) at (0 , 7/8) ;
		\coordinate (rising front proarrow out) at (1 , 1/8) ;
		\coordinate (back proarrow in) at (0 , 1/2) ;
		\coordinate (back proarrow out) at (1 , 1/2) ;
		\draw [name path = back proarrow]
			(back proarrow in) to [out = east , in = west] node [fromleft] {(α  \di{\procomp{,}} β) B} node [toright] {(α  \di{\procomp{,}} β) A}
			(back proarrow out) ;
		\draw [name path = falling front proarrow]
			(falling front proarrow in) to [out = east , looseness = 1.5 , in = west] node [fromleft] {F \tuple*{B , m}} node [toright] {H \tuple*{A , m}}
			(falling front proarrow out) ;
		\draw [name path = rising front proarrow]
			(rising front proarrow in) to [out = east , looseness = 1.5 , in = west] node [fromleft] {H \tuple*{m , B}} node [toright] {F \tuple*{m , A}}
			(rising front proarrow out) ;
		\path [name intersections = {of = falling front proarrow and rising front proarrow , by = {cell}}] ;
		\node [bead] (align) at (cell) {(α  \di{\procomp{,}} β) m} ;
	\end{tikzpicture}
	\; ≅ \;
	\begin{tikzpicture}[string diagram , x = {(40mm , 0mm)} , y = {(0mm , -28mm)} , baseline=(align.base)]
		\coordinate (falling front proarrow in) at (0 , 1/8) ;
		\coordinate (falling front proarrow out) at (1 , 7/8) ;
		\coordinate (rising front proarrow in) at (0 , 7/8) ;
		\coordinate (rising front proarrow out) at (1 , 1/8) ;
		\coordinate (upper back proarrow in) at (0 , 3/8) ;
		\coordinate (upper back proarrow out) at (1 , 3/8) ;
		\coordinate (lower back proarrow in) at (0 , 5/8) ;
		\coordinate (lower back proarrow out) at (1 , 5/8) ;
		\draw [name path = upper back proarrow]
			(upper back proarrow in) to [out = east , in = west] node [fromleft] {α \tuple*{B , B}}
			($ (falling front proarrow in) ! 1/2 ! (rising front proarrow out) $) to [out = east , in = west] node [toright] {α \tuple*{A , A}}
			(upper back proarrow out) ;
		\draw [name path = lower back proarrow]
			(lower back proarrow in) to [out = east , in = west] node [fromleft] {β \tuple*{B , B}}
			($ (rising front proarrow in) ! 1/2 ! (falling front proarrow out) $) to [out = east , in = west] node [toright] {β \tuple*{A , A}}
			(lower back proarrow out) ;
		\draw [name path = falling front proarrow]
			(falling front proarrow in) to [out = east , looseness = 0.6 , in = west]
				node [fromleft] {F \tuple*{B , m}} node [toright] {H \tuple*{A , m}}
			(falling front proarrow out) ;
		\draw [name path = rising front proarrow]
			(rising front proarrow in) to [out = east , looseness = 0.6 , in = west]
				node [fromleft] {H \tuple*{m , B}} node [toright] {F \tuple*{m , A}}
			(rising front proarrow out) ;
		\path [name intersections = {of = falling front proarrow and upper back proarrow , by = {falling upper interchanger}}] ;
		\path [name intersections = {of = rising front proarrow and upper back proarrow , by = {rising upper interchanger}}] ;
		\path [name intersections = {of = rising front proarrow and lower back proarrow , by = {rising lower interchanger}}] ;
		\path [name intersections = {of = falling front proarrow and lower back proarrow , by = {falling lower interchanger}}] ;
		\path [name intersections = {of = falling front proarrow and rising front proarrow , by = {product interchanger}}] ;
		\node [bead] at (falling upper interchanger) {α \tuple*{B , m}} ;
		\node [bead] at (rising upper interchanger) {\inv{α \tuple*{m , A}}} ;
		\node [bead] at (rising lower interchanger) {\inv{β \tuple*{m , B}}} ;
		\node [bead] at (falling lower interchanger) {β \tuple*{A , m}} ;
		\node [bead] (align) at (product interchanger) {\comp{} \, G \tuple*{m , m}} ;
	\end{tikzpicture}
\]

Unlike \refer[composite natural transformation]{natural transformations},
dinatural transformations are poorly behaved with respect to composition.
The problem stems from the proarrow-component disks.
For parallel difunctors $F , G , H : \hom{\product{\co{\cat{C}} , \cat{C}}}{\cat{D}}$
and consecutive dinatural transformations $α : \prohom<dinatural>{F}{G}$ and $β : \prohom<dinatural>{G}{H}$,
in order to define a composite of $α$ and $β$, for a proarrow $m : \prohom[\cat{C}]{A}{B}$
we would need to construct a disk $(\procomp{α , β}) m$ out of $α m$ and $β m$:
\[
	\begin{tikzpicture}[string diagram , x = {(16mm , 0mm)} , y = {(0mm , -14mm)} , baseline=(align.base)]
		\coordinate (falling front proarrow in) at (0 , 1/8) ;
		\coordinate (falling front proarrow out) at (1 , 7/8) ;
		\coordinate (rising front proarrow in) at (0 , 7/8) ;
		\coordinate (rising front proarrow out) at (1 , 1/8) ;
		\coordinate (back proarrow in) at (0 , 1/2) ;
		\coordinate (back proarrow out) at (1 , 1/2) ;
		\draw [name path = back proarrow]
			(back proarrow in) to [out = east , in = west] node [fromleft] {(\procomp{α , β}) B} node [toright] {(\procomp{α , β}) A}
			(back proarrow out) ;
		\draw [name path = falling front proarrow]
			(falling front proarrow in) to [out = east , in = west]
				node [fromleft] {F \tuple*{B , m}}
				node [toright] {H \tuple*{A , m}}
			(falling front proarrow out) ;
		\draw [name path = rising front proarrow]
			(rising front proarrow in) to [out = east , in = west]
				node [fromleft] {H \tuple*{m , B}}
				node [toright] {F \tuple*{m, A}}
			(rising front proarrow out) ;
		\path [name intersections = {of = falling front proarrow and rising front proarrow , by = {cell}}] ;
		\node [anchor = south] at ($ (cell) + (0 , -1/3) $) {F \tuple*{B , A}} ;
		\node [anchor = north] at ($ (cell) + (0 , 1/3) $) {H \tuple*{A , B}} ;
		\node [bead] (align) at (cell) {(\procomp{α , β}) m} ;
	\end{tikzpicture}
	\; \; \text{from} \,
	\begin{tikzpicture}[string diagram , x = {(16mm , 0mm)} , y = {(0mm , -14mm)} , baseline=(align.base)]
		\coordinate (falling front proarrow in) at (0 , 1/8) ;
		\coordinate (falling front proarrow out) at (1 , 7/8) ;
		\coordinate (rising front proarrow in) at (0 , 7/8) ;
		\coordinate (rising front proarrow out) at (1 , 1/8) ;
		\coordinate (back proarrow in) at (0 , 1/2) ;
		\coordinate (back proarrow out) at (1 , 1/2) ;
		\draw [name path = back proarrow]
			(back proarrow in) to [out = east , in = west] node [fromleft] {α B} node [toright] {α A}
			(back proarrow out) ;
		\draw [name path = falling front proarrow]
			(falling front proarrow in) to [out = east , in = west]
				node [fromleft] {F \tuple*{B , m}}
				node [toright] {G \tuple*{A , m}}
			(falling front proarrow out) ;
		\draw [name path = rising front proarrow]
			(rising front proarrow in) to [out = east , in = west]
				node [fromleft] {G \tuple*{m , B}}
				node [toright] {F \tuple*{m, A}}
			(rising front proarrow out) ;
		\path [name intersections = {of = falling front proarrow and rising front proarrow , by = {cell}}] ;
		\node [anchor = south] at ($ (cell) + (0 , -1/3) $) {F \tuple*{B , A}} ;
		\node [anchor = north] at ($ (cell) + (0 , 1/3) $) {G \tuple*{A , B}} ;
		\node [bead] (align) at (cell) {α m} ;
	\end{tikzpicture}
	\hspace{-1mm} \text{and} \hspace{-1mm}
	\begin{tikzpicture}[string diagram , x = {(16mm , 0mm)} , y = {(0mm , -14mm)} , baseline=(align.base)]
		\coordinate (falling front proarrow in) at (0 , 1/8) ;
		\coordinate (falling front proarrow out) at (1 , 7/8) ;
		\coordinate (rising front proarrow in) at (0 , 7/8) ;
		\coordinate (rising front proarrow out) at (1 , 1/8) ;
		\coordinate (back proarrow in) at (0 , 1/2) ;
		\coordinate (back proarrow out) at (1 , 1/2) ;
		\draw [name path = back proarrow]
			(back proarrow in) to [out = east , in = west] node [fromleft] {β B} node [toright] {β A}
			(back proarrow out) ;
		\draw [name path = falling front proarrow]
			(falling front proarrow in) to [out = east , in = west]
				node [fromleft] {G \tuple*{B , m}}
				node [toright] {H \tuple*{A , m}}
			(falling front proarrow out) ;
		\draw [name path = rising front proarrow]
			(rising front proarrow in) to [out = east , in = west]
				node [fromleft] {H \tuple*{m , B}}
				node [toright] {G \tuple*{m, A}}
			(rising front proarrow out) ;
		\path [name intersections = {of = falling front proarrow and rising front proarrow , by = {cell}}] ;
		\node [anchor = south] at ($ (cell) + (0 , -1/3) $) {G \tuple*{B , A}} ;
		\node [anchor = north] at ($ (cell) + (0 , 1/3) $) {H \tuple*{A , B}} ;
		\node [bead] (align) at (cell) {β m} ;
	\end{tikzpicture}
\]
We could do this if we had a proarrow $\prohom{G \tuple*{A , B}}{G \tuple*{B , A}}$,
which we would have if we could produce a proarrow antiparallel to $m$.
If $\cat{C}$ has additional structure,
such as being a proarrow equipment or being groupoidal in the proarrow dimension,
then this may be possible, but in general it is not.
However, as in the $1$-categorical setting \cite{street-1970-dinatural_transformations},
dinatural transformations can be composed with natural transformations of difunctors.

\begin{proposition} [dinatural--natural composition] \label{theorem: dinatural-natural composition}
	From a \refer{dinatural transformation} $α : \prohom<dinatural>{F}{G}$
	and consecutive \refer{natural transformation} $β : \prohom{G}{H}$
	we construct a dinatural transformation $\procomp{α , β} : \prohom<dinatural>{F}{H}$,
	which we call their \define[dinatural-natural composition]{composite}, with
	\refer[proarrow dinatural transformation object-component proarrow]{object components}
	$(\procomp{α , β}) A ≔ \procomp{α A , β \tuple*{A , A}}$,
	\refer[proarrow dinatural transformation arrow-component square]{arrow components}
	$(\procomp{α , β}) f ≔ \procomp{α f , β \tuple*{f , f}}$,
	and for $m : \prohom{A}{B}$
	\refer[proarrow dinatural transformation proarrow-component disk]{proarrow components}
	\[
		\begin{tikzpicture}[string diagram , x = {(18mm , 0mm)} , y = {(0mm , -16mm)} , baseline=(align.base)]
			\coordinate (falling front proarrow in) at (0 , 1/8) ;
			\coordinate (falling front proarrow out) at (1 , 7/8) ;
			\coordinate (rising front proarrow in) at (0 , 7/8) ;
			\coordinate (rising front proarrow out) at (1 , 1/8) ;
			\coordinate (back proarrow in) at (0 , 1/2) ;
			\coordinate (back proarrow out) at (1 , 1/2) ;
			\draw [name path = back proarrow]
				(back proarrow in) to [out = east , in = west] node [fromleft] {(\procomp{α , β}) B} node [toright] {(\procomp{α , β}) A}
				(back proarrow out) ;
			\draw [name path = falling front proarrow]
				(falling front proarrow in) to [out = east , in = west]
					node [fromleft] {F \tuple*{B , m}}
					node [toright] {H \tuple*{A , m}}
				(falling front proarrow out) ;
			\draw [name path = rising front proarrow]
				(rising front proarrow in) to [out = east , in = west]
					node [fromleft] {H \tuple*{m , B}}
					node [toright] {F \tuple*{m, A}}
				(rising front proarrow out) ;
			\path [name intersections = {of = falling front proarrow and rising front proarrow , by = {cell}}] ;
			\node [bead] (align) at (cell) {(\procomp{α , β}) m} ;
		\end{tikzpicture}
		\quad :≅ \quad
		\begin{tikzpicture}[string diagram , x = {(36mm , 0mm)} , y = {(0mm , -24mm)} , baseline=(align.base)]
			\coordinate (falling front proarrow in) at (0 , 1/8) ;
			\coordinate (falling front proarrow out) at (1 , 7/8) ;
			\coordinate (rising front proarrow in) at (0 , 7/8) ;
			\coordinate (rising front proarrow out) at (1 , 1/8) ;
			\coordinate (upper back proarrow in) at (0 , 3/8) ;
			\coordinate (upper back proarrow out) at (1 , 3/8) ;
			\coordinate (lower back proarrow in) at (0 , 5/8) ;
			\coordinate (lower back proarrow out) at (1 , 5/8) ;
			\draw [name path = upper back proarrow]
				(upper back proarrow in) to [out = east , in = west] node [fromleft] {α B} coordinate [pos = 1/2] (di disk) node [toright] {α A}
				(upper back proarrow out) ;
			\draw [name path = lower back proarrow]
				(lower back proarrow in) to [out = east , looseness = 1.25 , in = west] node [fromleft] {β \tuple*{B , B}}
				(di disk |- rising front proarrow in) to [out = east , looseness = 1.25 , in = west] node [toright] {β \tuple*{A , A}}
				(lower back proarrow out) ;
			\draw [name path = falling front proarrow]
				(falling front proarrow in) to [out = east , out looseness = 2 , in = north west] node [fromleft] {F \tuple*{B , m}}
				(di disk) to [out = south east , in = west] node [toright] {H \tuple*{A , m}}
				(falling front proarrow out) ;
			\draw [name path = rising front proarrow]
				(rising front proarrow in) to [out = east , in = south west] node [fromleft] {H \tuple*{m , B}}
				(di disk) to [out = north east , in looseness = 2 , in = west] node [toright] {F \tuple*{m , A}}
				(rising front proarrow out) ;
			\path [name intersections = {of = falling front proarrow and rising front proarrow , by = {cell1}}] ;
			\path [name intersections = {of = rising front proarrow and lower back proarrow , by = {cell2}}] ;
			\path [name intersections = {of = falling front proarrow and lower back proarrow , by = {cell3}}] ;
			\node [bead] at (cell1) {α m} ;
			\node [bead] at (cell2) {\inv{β \tuple*{m , B}}} ;
			\node [bead] at (cell3) {β \tuple*{A , m}} ;
			\node (align) at (1/2 , 1/2) {} ;
		\end{tikzpicture}
		\tag*{\hyperlink{proof: dinatural-natural composition}{\qedsymbol}}
	\]
\end{proposition}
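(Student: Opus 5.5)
We must check that the proposed data for $\procomp{α , β}$ satisfies the axioms of a dinatural transformation: boundaries, invertibility, preservation of arrow composition \eqref{proarrow dinatural transformation arrow composition preservation}, compatibility with proarrow composition \eqref{proarrow dinatural transformation proarrow composition compatibility}, and naturality for squares \eqref{proarrow dinatural transformation square naturality}. Throughout we use the axioms of $α$ (as a dinatural transformation) and of $β$ (as a natural transformation of difunctors; appendix \ref{section: double categories ctd}), together with interchange in $\cat{D}$ and suppressed coherators.

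\textbf{Boundaries, invertibility and arrows.} Reading the defining diagram, the proposed component has the shape of a proarrow-component disk. Its upper boundary is $\procomp{F \tuple*{B , m} , α B , β \tuple*{B , B} , H \tuple*{m , B}}$, and $α m$ turns this into $\procomp{α A , G \tuple*{m , A} , β \tuple*{B , B} , H \tuple*{m , B}}$. The inverse $\inv{β \tuple*{m , B}}$ then slides $β$ past $H \tuple*{m , B}$, giving $G \tuple*{m , B} ⊙ β \tuple*{B , A}$ up to arrangement, and $β \tuple*{A , m}$ slides $β$ past $G \tuple*{A , m}$. So the composite $(\procomp{α , β}) m$ is a vertical composite of whiskerings of invertible disks, and is therefore invertible. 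The arrow axioms are immediate. Since $(\procomp{α , β}) f = \procomp{α f , β \tuple*{f , f}}$, they follow from the corresponding laws for $α$ and for $β$ at $\tuple*{f , f}$, using functoriality of proarrow composition of squares together with $\tuple*{\comp{f , g} , \comp{f , g}} = \comp{\tuple*{f , f} , \tuple*{g , g}}$.

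\textbf{Proarrow composition.} For the unit $\procomp{} A$, we have $α (\procomp{} A) ≅ \comp{}$, and $β \tuple*{\procomp{} A , A}$, $β \tuple*{A , \procomp{} A}$ are identities up to coherators by the unit laws for $β$. For $\procomp{m , n}$, we expand $α (\procomp{m , n})$ by \eqref{proarrow dinatural transformation proarrow composition compatibility}. We expand $β \tuple*{A , \procomp{m , n}}$ and $\inv{β \tuple*{\procomp{m , n} , C}}$ by the composition compatibility of $β$, writing $\tuple*{A , \procomp{m , n}} = \procomp{\tuple*{A , m} , \tuple*{A , n}}$, and similarly in the contravariant slot. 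We then rearrange the resulting diagram into the two stacked copies $\procomp{F \tuple*{C , m} , (\procomp{α , β}) n , H \tuple*{m , C}}$ and $\procomp{F \tuple*{n , A} , (\procomp{α , β}) m , H \tuple*{A , n}}$.

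The rearrangement has to move $β$-disks across the identity crossings $\comp{} \, G \tuple*{m , n}$ and $\comp{} \, H \tuple*{m , n}$. That is done with Lemma \ref{theorem: natural transformation product yang baxter} (applied in $\cat{B} = \co{\cat{C}}$), possibly combined with naturality of $β$ on the identity squares $\comp{} \, m$ and $\comp{} \, n$. This bookkeeping step is the main obstacle. I would do it in the surface/string calculus, tracking where each strand crossing lives. The invertible disks $β \tuple*{m , C}$ and $\inv{β \tuple*{m , C}}$ then cancel in the middle, which is exactly where the intermediate index $C$ versus $A$ must match.

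\textbf{Squares.} For $φ$, we start from the left side of \eqref{proarrow dinatural transformation square naturality} for $\procomp{α , β}$. First we apply the square naturality of $α$ to move $α g$, $F \tuple*{g , φ}$ and $G \tuple*{φ , g}$ past $α n$. Next we use the naturality of $β$ for the squares $\tuple*{g , φ}$ and $\tuple*{φ , g}$ of the dual pairing to slide the arrow strand through $β \tuple*{B , n}$ and $\inv{β \tuple*{n , B}}$, yielding $β \tuple*{A , m}$ and $\inv{β \tuple*{m , A}}$ together with $H$- and $G$-images of $φ$. Finally, interchange carries the $G$-squares through $α$'s region, producing the right side.
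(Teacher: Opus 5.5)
Your handling of arrow composition matches the paper. So does your plan for binary proarrow composition: expand via \eqref{proarrow dinatural transformation proarrow composition compatibility} for $\alpha$ and \eqref{proarrow natural transformation proarrow composition compatibility} for $\beta$, then rearrange with Lemma \ref{theorem: natural transformation product yang baxter}. One correction there: no $\beta(m,C)$ cancels against its inverse. The lemma, with $m$ in the contravariant slot and $n$ in the covariant one, directly trades $\beta(m,C)^{-1}$, $\mathrm{id}\,G(m,n)$, $\beta(A,n)$ for $\beta(B,n)$, $\mathrm{id}\,H(m,n)$, $\beta(m,B)^{-1}$.

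The real problem is that you misread the order of the disks in $(\alpha \odot \beta)\,m$. The source of $\alpha m$ is $F(B,m) \odot \alpha B \odot G(m,B)$, so it cannot act first on $F(B,m) \odot \alpha B \odot \beta(B,B) \odot H(m,B)$. Your intermediate proarrows $G(m,A)$ and $\beta(B,A)$ do not typecheck. The composite is $\beta(m,B)^{-1}$ first (producing $G(m,B) \odot \beta(A,B)$), then $\alpha m$, then $\beta(A,m)$. Invertibility survives this misreading, but your square-naturality argument does not.

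With whiskerings suppressed, the left side of \eqref{proarrow dinatural transformation square naturality} for $\alpha \odot \beta$ is $(F(g,\varphi) \odot \alpha g \odot \beta(g,g) \odot H(\varphi,g)) \cdot \beta(n,D)^{-1} \cdot \alpha n \cdot \beta(B,n)$. It contains no $G(\varphi,g)$, and $\alpha n$ can only act once $\beta(n,D)^{-1}$ has produced $G(n,D)$. So the dinatural square law for $\alpha$ cannot be your first step. The correct sequence is:
\begin{enumerate}
\item Apply \eqref{proarrow natural transformation square naturality} for $\beta$ at the square $(\varphi,g)$. This gives $(\beta(g,g) \odot H(\varphi,g)) \cdot \beta(n,D)^{-1} = \beta(m,C)^{-1} \cdot (G(\varphi,g) \odot \beta(f,g))$.
\item Only now does \eqref{proarrow dinatural transformation square naturality} for $\alpha$ apply. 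It leaves $(G(f,\varphi) \odot \beta(f,g)) \cdot \beta(B,n)$.
\item Rewrite that as $\beta(A,m) \cdot (\beta(f,f) \odot H(f,\varphi))$ using square naturality of $\beta$ at $(f,\varphi)$.
\end{enumerate}
Interchange cannot perform the last step, because it never turns the $G$-image of $\varphi$ into the $H$-image required on the right side. So the squares needed are $(\varphi,g)$ and $(f,\varphi)$, not $(g,\varphi)$ and $(\varphi,g)$. The disks traded are $\beta(n,D)^{-1}$ for $\beta(m,C)^{-1}$ and $\beta(B,n)$ for $\beta(A,m)$, not $\beta(n,B)^{-1}$ and $\beta(m,A)^{-1}$. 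This $\beta$--$\alpha$--$\beta$ sandwich is exactly the paper's three-step proof.
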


Similarly, we have the dinatural composite $\procomp{α , β}$
of a natural $α : \prohom{F}{G}$ with a consecutive dinatural $β : \prohom<dinatural>{G}{H}$.
We can think of these mixed composition operations
as a two-sided \emph{action} of natural transformations on dinatural transformations.

\begin{proposition} [dinatural--natural composition laws] \label{theorem: dinatural-natural composition laws}
	\liststartstheorem
	\begin{itemize}
		\item
			\refer[identity natural transformation]{Identity natural transformations}
			are units for dinatural--natural composition:
			for dinatural $α : \prohom<dinatural>{F}{G}$
			we have
			$\procomp{\procomp{} \, F , α} ≅ α ≅ \procomp{α , \procomp{} \, G}$.
		\item
			Dinatural--natural composition with a \refer{composite natural transformation}
			is iterated dinatural--natural composition:
			for consecutive $α$, $β$, $γ$, $δ$, $ε$, with $γ$ dinatural and the rest natural
			we have
			$\procomp{(\procomp{α , β}) , γ} ≅ \procomp{α , (\procomp{β , γ})}$, and
			$\procomp{γ , (\procomp{δ , ε})} ≅ \procomp{(\procomp{γ , δ}) , ε}$.
		\item
			Two-sided dinatural--natural composition is associative:
			for consecutive $α$, $β$, $γ$, with $β$ dinatural and the rest natural
			we have
			$\procomp{(\procomp{α , β}) , γ} ≅ \procomp{α , (\procomp{β , γ})}$.
		\item
			Dinatural--natural composition with a diagonalized \refer{identity natural transformation}
			is \refer[transformation diagonalization]{diagonalization}:
			for natural $α : \prohom{F}{G}$
			we have
			$\procomp{\di{\procomp{}} \, F , α} ≅ \di{α} ≅ \procomp{α , \di{\procomp{}} \, G}$.
			\hfill \hyperlink{proof: dinatural-natural composition laws}{\qedsymbol}
	\end{itemize}
\end{proposition}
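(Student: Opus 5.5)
The plan is to prove every clause componentwise. Two dinatural transformations $\prohom<dinatural>{F}{G}$ are equal (up to suppressed coherators) exactly when their object-component proarrows, arrow-component squares and proarrow-component disks agree. So each law reduces to comparing three families of components. Object and arrow components are given by proarrow composites of components, namely $\procomp{α A , β \tuple*{A , A}}$ and $\procomp{α f , β \tuple*{f , f}}$. For these, every clause follows at once from the unit and associativity coherators of proarrow composition in $\cat{D}$, together with the definitions of the \refer{identity natural transformation} and the \refer{composite natural transformation} (recalled in appendix \ref{section: double categories ctd}). The real content lies in the proarrow-component disks, and I would handle these with string diagrams, as in the display defining $(\procomp{α , β}) m$ in proposition \ref{theorem: dinatural-natural composition}.

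\textbf{Unit laws.} For $\procomp{α , \procomp{} \, G}$, the proarrow component at $m$ is $α m$ with the back proarrow $\procomp{} \, G \tuple*{\arg,\arg}$ threaded through. The beads are $(\procomp{} \, G) \tuple*{A , m}$ and $\inv{(\procomp{} \, G) \tuple*{m , B}}$, which are identity disks by definition of the identity natural transformation. Erasing the unit strand via unitors leaves exactly $α m$. The left unit is the mirror-image argument.

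\textbf{Iterated composition.} For $\procomp{α , (\procomp{β , γ})}$, I would expand the proarrow component of the composite natural transformation $\procomp{β , γ}$ at $\tuple*{A , m}$ and at $\tuple*{m , B}$. These are vertical (arrow-dimension) composites of whiskered $β$ and $γ$ disks, and the inverse of the latter reverses the order. The resulting diagram then has two back strands $β$ and $γ$, each crossing the two front strands. This is literally the diagram for $\procomp{(\procomp{α , β}) , γ}$, up to sliding beads along strands (interchange) and associators. The other one-sided law is dual.

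\textbf{Two-sided associativity.} For $\procomp{(\procomp{α , β}) , γ} ≅ \procomp{α , (\procomp{β , γ})}$ with $β$ dinatural, both sides give $β m$ flanked by $α$-beads above and $γ$-beads below. The only issue is their relative height: on one side $\inv{α \tuple*{m , A}}$ and $α \tuple*{B , m}$ are applied before the $γ$-beads, on the other side after. These beads act on disjoint proarrow strands, so they commute by the interchange law of $\cat{D}$.

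\textbf{Diagonalization.} I expect this clause to be the main obstacle. Consider $\procomp{α , \di{\procomp{}} \, G}$. Its proarrow component at $m$ places $α$-beads around $(\di{\procomp{}} \, G) m$, and by the computation displayed after proposition \ref{theorem: transformation dummy dinatural is natural} the latter is the identity disk on $G \tuple*{m , m}$. The result is the diagram $\procomp{α \tuple*{B , m} , G \tuple*{m , m}} , (\procomp{\inv{α \tuple*{m , A}} , \dots})$, which after unitors is the middle expression defining $\di{α} m$ in proposition \ref{theorem: transformation diagonalization}. The other side, $\procomp{\di{\procomp{}} \, F , α}$, instead yields the right-hand expression, with the identity disk $F \tuple*{m , m}$ and the beads $\inv{α \tuple*{m , B}}$ and $α \tuple*{A , m}$. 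To identify it with $\di{α} m$, I would invoke lemma \ref{theorem: natural transformation product yang baxter}, instantiated at $p = m$ viewed in $\co{\cat{C}}$. The care needed is in tracking the bracketings and the hidden coherators so that the Yang–Baxter-type equation \eqref{natural transformation product yang baxter} applies on the nose. The object and arrow components are again routine, since $\procomp{\procomp{} \, F \tuple*{A , A} , α \tuple*{A , A}} ≅ α \tuple*{A , A} = \di{α} A$.
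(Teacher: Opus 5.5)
Your proposal is correct and is essentially the paper's own argument, which the paper gives only as ``by direct computation''. You carry out exactly that componentwise computation: unitors for the unit laws, literal coincidence of the diagrams for the iterated laws, interchange of beads on disjoint strands for two-sided associativity, and lemma \ref{theorem: natural transformation product yang baxter} to match $\procomp{\di{\procomp{}} \, F , α}$ with the defining expression for $\di{α}$. One small slip: in the iterated-composition paragraph you silently relabel so that $α$ is the dinatural factor, so you are really proving the second equation of that bullet, but as you note the first is dual.
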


Once we have dinatural transformations as a new form of proarrow
we can define a new form of natural square
having dinatural transformations at its proarrow boundaries.

\begin{definition}[dimodification]
	For parallel difunctors of double categories
	$F , G , I , J : \hom{\product{\co{\cat{C}} , \cat{C}}}{\cat{D}}$,
	\refer[arrow-dimension natural transformation]{arrow-dimension natural transformations}
	$α : \hom{F}{G}$ and $β : \hom{I}{J}$,
	and \refer[dinatural transformation]{proarrow-dimension dinatural transformations}
	$γ : \prohom<dinatural>{F}{I}$ and $δ : \prohom<dinatural>{G}{J}$,
	a (cubical) \define{dimodification}
	$
		μ : \doublehom[\doublehom[\hom[]{\product{\co{\cat{C}} , \cat{C}}}{\cat{D}}]
		{\prohom<dinatural>{F}{I}}{\prohom<dinatural>{G}{J}}{\hom{F}{G}}{\hom{I}{J}}]
		{γ}{δ}{α}{β}
	$
	consists of the following data.
	\begin{description}
		\item[{\define*[dimodification object-component square]{object-component squares}}:]
			for each $\cat{C}$-object
			$A$
			a $\cat{D}$-square
			\[
				\begin{tikzpicture}[string diagram , x = {(16mm , 0mm)} , y = {(0mm , -12mm)} , baseline=(align.base)]
					\coordinate (cell) at (1/2 , 1/2) ;
					\draw (0 , 0 |- cell) to node [fromleft] {γ A} node [toright] {δ A} (1 , 1 |- cell) ;
					\draw (cell |- 0 , 0) to node [fromabove] {α \tuple*{A , A}} node [tobelow] {β \tuple*{A , A}} (cell |- 1 , 1) ;
					\node at ($ (cell) + (-1/2 , -1/3) $) {F \tuple*{A , A}} ;
					\node at ($ (cell) + (1/2 , -1/3) $) {G \tuple*{A , A}} ;
					\node at ($ (cell) + (-1/2 , 1/3) $) {I \tuple*{A , A}} ;
					\node at ($ (cell) + (1/2 , 1/3) $) {J \tuple*{A , A}} ;
					\node [bead] (align) at (cell) {μ A} ;
				\end{tikzpicture}
			\]
	\end{description}
	This data is required to satisfy the following relations.
	\begin{description}
		\item[naturality for arrows:]
			for an arrow
			$f : \hom{A}{B}$
			of $\cat{C}$
			we have
			\begin{equation} \label{dimodification arrow naturality}
				\procomp{α \tuple*{f , f} , (\comp{μ A , δ f})}  ≅  \procomp{(\comp{γ f , μ B}) , β \tuple*{f , f}}
			\end{equation}
			\[
				\begin{tikzpicture}[string diagram , x = {(20mm , 0mm)} , y = {(0mm , -18mm)} , baseline=(align.base)]
					\coordinate (back proarrow in) at (0 , 3/4) ;
					\coordinate (back proarrow out) at (1 , 3/4) ;
					\coordinate (back arrow in) at (3/4 , 0) ;
					\coordinate (back arrow out) at (1/4 , 1) ;
					\coordinate (front arrow in) at (1/4 , 0) ;
					\coordinate (front arrow out) at (3/4 , 1) ;
					\draw [name path = back proarrow]
						(back proarrow in) to [out = east , in = west] node [fromleft] {γ A} node [toright] {δ B}
						(back proarrow out) ;
					\draw [name path =  back arrow]
						(back arrow in) to [out = south , in = north] node [fromabove] {\quad α \tuple*{B , B}}
						(back arrow out |- back proarrow out) to [out = south , in = north] node [tobelow] {β \tuple*{A , A} \quad}
						(back arrow out) ;
					\draw [name path = front arrow]
						(front arrow in) to [out = south , in = north] node [fromabove] {F \tuple*{f , f} \quad}
						(front arrow out |- back proarrow out) to [out = south , in = north] node [tobelow] {\quad J \tuple*{f , f}}
						(front arrow out) ;
					\path [name intersections = {of = back arrow and back proarrow , by = {functor square}}] ;
					\path [name intersections = {of = back proarrow and front arrow , by = {interchanger square}}] ;
					\path [name intersections = {of = back arrow and front arrow , by = {interchanger disk}}] ;
					\node [bead] at (functor square) {μ A} ;
					\node [bead] at (interchanger square) {δ f} ;
					\node [bead] at (interchanger disk) {α \tuple*{f , f}} ;
					\node (align) at (1/2 , 1/2) {} ;
				\end{tikzpicture}
				\, ≅ \,
				\begin{tikzpicture}[string diagram , x = {(20mm , 0mm)} , y = {(0mm , -18mm)} , baseline=(align.base)]
					\coordinate (back proarrow in) at (0 , 1/4) ;
					\coordinate (back proarrow out) at (1 , 1/4) ;
					\coordinate (back arrow in) at (3/4 , 0) ;
					\coordinate (back arrow out) at (1/4 , 1) ;
					\coordinate (front arrow in) at (1/4 , 0) ;
					\coordinate (front arrow out) at (3/4 , 1) ;
					\draw [name path = back proarrow]
						(back proarrow in) to [out = east , in = west] node [fromleft] {γ A} node [toright] {δ B}
						(back proarrow out) ;
					\draw [name path =  back arrow]
						(back arrow in) to [out = south , in = north] node [fromabove] {\quad α \tuple*{B , B}}
						(back arrow in |- back proarrow in) to [out = south , in = north] node [tobelow] {β \tuple*{A , A} \quad}
						(back arrow out) ;
					\draw [name path = front arrow]
						(front arrow in) to [out = south , in = north] node [fromabove] {F \tuple*{f , f} \quad}
						(front arrow in |- back proarrow in) to [out = south , in = north] node [tobelow] {\quad J \tuple*{f , f}}
						(front arrow out) ;
					\path [name intersections = {of = back arrow and back proarrow , by = {functor square}}] ;
					\path [name intersections = {of = back proarrow and front arrow , by = {interchanger square}}] ;
					\path [name intersections = {of = back arrow and front arrow , by = {interchanger disk}}] ;
					\node [bead] at (functor square) {μ B} ;
					\node [bead] at (interchanger square) {γ f} ;
					\node [bead] at (interchanger disk) {β \tuple*{f , f}} ;
					\node (align) at (1/2 , 1/2) {} ;
				\end{tikzpicture}
			\]
		\item[naturality for proarrows:]
			for a proarrow
			$m : \prohom{A}{B}$
			of $\cat{C}$
			we have
			\begin{equation} \label{dimodification proarrow naturality}
				\comp{γ m , (\procomp{α \tuple*{m , A}, μ A , β \tuple*{A , m}})} ≅ \comp{(\procomp{α \tuple*{B , m} , μ B , β \tuple*{m , B}}) , δ m}
			\end{equation}
			\[
				\begin{tikzpicture}[string diagram , x = {(30mm , 0mm)} , y = {(0mm , -20mm)} , baseline=(align.base)]
					\coordinate (falling front proarrow in) at (0 , 1/6) ;
					\coordinate (falling front proarrow out) at (1 , 5/6) ;
					\coordinate (rising front proarrow in) at (0 , 5/6) ;
					\coordinate (rising front proarrow out) at (1 , 1/6) ;
					\coordinate (back proarrow in) at (0 , 1/2) ;
					\coordinate (back proarrow out) at (1 , 1/2) ;
					\coordinate (back arrow in) at (3/4 , 0) ;
					\coordinate (back arrow out) at (3/4 , 1) ;
					\draw [name path =  back arrow]
						(back arrow in) to [out = south , in = north] node [fromabove] {α \tuple*{B , A}} node [tobelow] {β \tuple*{A , B}}
						(back arrow out) ;
					\draw [name path = back proarrow]
						(back proarrow in) to [out = east , in = west] node [fromleft] {γ B}
						(back arrow in |- back proarrow out) to [out = east , in = west] node [toright] {δ A}
						(back proarrow out) ;
					\draw [name path = falling front proarrow]
						(falling front proarrow in) to [out = east , in = west] node [fromleft] {F \tuple*{B , m}}
						(back arrow out |- falling front proarrow out) to [out = east , in = west] node [toright] {J \tuple*{A , m}}
						(falling front proarrow out) ;
					\draw [name path = rising front proarrow]
						(rising front proarrow in) to [out = east , in = west] node [fromleft] {I \tuple*{m , B}}
						(back arrow out |- rising front proarrow out) to [out = east , in = west] node [toright] {G \tuple*{m , A}}
						(rising front proarrow out) ;
					\path [name intersections = {of = back arrow and back proarrow , by = {functor square}}] ;
					\path [name intersections = {of = back arrow and falling front proarrow , by = {falling interchanger square}}] ;
					\path [name intersections = {of = back arrow and rising front proarrow , by = {rising interchanger square}}] ;
					\path [name intersections = {of = falling front proarrow and rising front proarrow , by = {interchanger disk}}] ;
					\node [bead] at (functor square) {μ A} ;
					\node [bead] at (falling interchanger square) {β \tuple*{A , m}} ;
					\node [bead] at (rising interchanger square) {α \tuple*{m , A}} ;
					\node [bead] (align) at (interchanger disk) {γ m} ;
				\end{tikzpicture}
				\, ≅ \,
				\begin{tikzpicture}[string diagram , x = {(30mm , 0mm)} , y = {(0mm , -20mm)} , baseline=(align.base)]
					\coordinate (falling front proarrow in) at (0 , 1/6) ;
					\coordinate (falling front proarrow out) at (1 , 5/6) ;
					\coordinate (rising front proarrow in) at (0 , 5/6) ;
					\coordinate (rising front proarrow out) at (1 , 1/6) ;
					\coordinate (back proarrow in) at (0 , 1/2) ;
					\coordinate (back proarrow out) at (1 , 1/2) ;
					\coordinate (back arrow in) at (1/4 , 0) ;
					\coordinate (back arrow out) at (1/4 , 1) ;
					\draw [name path =  back arrow]
						(back arrow in) to [out = south , in = north] node [fromabove] {α \tuple*{B , A}} node [tobelow] {β \tuple*{A , B}}
						(back arrow out) ;
					\draw [name path = back proarrow]
						(back proarrow in) to [out = east , in = west] node [fromleft] {γ B}
						(back arrow out |- back proarrow in) to [out = east , in = west] node [toright] {δ A}
						(back proarrow out) ;
					\draw [name path = falling front proarrow]
						(falling front proarrow in) to [out = east , in = west] node [fromleft] {F \tuple*{B , m}}
						(back arrow in |- falling front proarrow in) to [out = east , in = west] node [toright] {J \tuple*{A , m}}
						(falling front proarrow out) ;
					\draw [name path = rising front proarrow]
						(rising front proarrow in) to [out = east , in = west] node [fromleft] {I \tuple*{m , B}}
						(back arrow in |- rising front proarrow in) to [out = east , in = west] node [toright] {G \tuple*{m , A}}
						(rising front proarrow out) ;
					\path [name intersections = {of = back arrow and back proarrow , by = {functor square}}] ;
					\path [name intersections = {of = back arrow and falling front proarrow , by = {falling interchanger square}}] ;
					\path [name intersections = {of = back arrow and rising front proarrow , by = {rising interchanger square}}] ;
					\path [name intersections = {of = falling front proarrow and rising front proarrow , by = {interchanger disk}}] ;
					\node [bead] at (functor square) {μ B} ;
					\node [bead] at (falling interchanger square) {α \tuple*{B , m}} ;
					\node [bead] at (rising interchanger square) {β \tuple*{m , B}} ;
					\node [bead] (align) at (interchanger disk) {δ m} ;
				\end{tikzpicture}
			\]
			suppressing the proarrow bracketing and coherators, as usual.
	\end{description}
\end{definition}

We can diagonalize a modification to a dimodification
by \refer[transformation diagonalization]{diagonalizing} its proarrow-dimension natural transformations to dinatural transformations.

\begin{proposition}[modification diagonalization] \label{theorem: modification diagonalization}
	For each \refer{modification} of \refer[natural transformation]{natural transformations} of difunctors
	$
		μ : \doublehom[\doublehom[\hom[]{\product{\co{\cat{C}} , \cat{C}}}{\cat{D}}]
		{\prohom{F}{I}}{\prohom{G}{J}}{\hom{F}{G}}{\hom{I}{J}}]
		{γ}{δ}{α}{β}
	$
	there is a \refer{dimodification}
	$
		\di{μ} : \doublehom[\doublehom[\hom[]{\product{\co{\cat{C}} , \cat{C}}}{\cat{D}}]
		{\prohom<dinatural>{F}{I}}{\prohom<dinatural>{G}{J}}{\hom{F}{G}}{\hom{I}{J}}]
		{\di{γ}}{\di{δ}}{α}{β}
	$,
	which we call its \define[modification diagonalization]{diagonalization}, with
	\refer[dimodification object-component square]{object component squares} $\di{μ} A ≔ μ \tuple*{A , A}$.
	\hfill \hyperlink{proof: modification diagonalization}{\qedsymbol}
\end{proposition}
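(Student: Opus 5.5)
We first check that the proposed data have the right boundaries, and then verify the two dimodification laws, \eqref{dimodification arrow naturality} and \eqref{dimodification proarrow naturality}. Each law will be reduced to the corresponding modification laws for $μ$ (recalled in appendix \ref{section: double categories ctd}).

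\emph{Boundaries.} A modification object-component $μ \tuple*{A , A}$ has proarrow boundaries $γ \tuple*{A , A} = \di{γ} A$ and $δ \tuple*{A , A} = \di{δ} A$. Its arrow boundaries are $α \tuple*{A , A}$ and $β \tuple*{A , A}$. These are exactly the boundaries required of $\di{μ} A$.

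\emph{Arrow naturality.} For $f : \hom{A}{B}$, the arrow-component squares of the diagonalizations are $\di{γ} f = γ \tuple*{f , f}$ and $\di{δ} f = δ \tuple*{f , f}$ (proposition \ref{theorem: transformation diagonalization}). With these substitutions, \eqref{dimodification arrow naturality} becomes literally the modification arrow-naturality law for $μ$ at the $\product{\co{\cat{C}} , \cat{C}}$-arrow $\tuple*{f , f} : \hom{\tuple*{A , A}}{\tuple*{B , B}}$. So this law is immediate.

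\emph{Proarrow naturality.} This is the substantive step. For $m : \prohom{A}{B}$, the disk $\di{γ} m$ is by definition the composite of $γ \tuple*{B , m}$, $\inv{γ \tuple*{m , A}}$ and the identity $\comp{} \, I \tuple*{m , m}$. By the Yang--Baxter lemma \ref{theorem: natural transformation product yang baxter} it may equivalently be written via $\inv{γ \tuple*{m , B}}$, $γ \tuple*{A , m}$ and $\comp{} \, F \tuple*{m , m}$. The same holds for $\di{δ} m$.

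Substituting these into both sides of \eqref{dimodification proarrow naturality}, the left side consists of the $γ$-disks for $\tuple*{B , m}$ and $\tuple*{m , A}$, followed by $μ A$ flanked by the $α$- and $β$-squares. We then slide $μ$ across one disk at a time using the modification proarrow-naturality law for $μ$:
\begin{itemize}
\item first at the proarrow $\tuple*{m , A}$ (in its inverted form), which carries $μ \tuple*{A , A}$ to $μ \tuple*{B , A}$;
\item then at $\tuple*{B , m}$, which carries $μ \tuple*{B , A}$ to $μ \tuple*{B , B}$.
\end{itemize}
After these two moves the $γ$-disks have been traded for $δ$-disks. The arrow-dimension squares $α \tuple*{m , A}$, $α \tuple*{B , m}$, etc. are rearranged along the way by functoriality of the arrow-dimension natural transformations $α , β$ on the composite $\tuple*{m , m} ≅ \procomp{\tuple*{B , m} , \tuple*{m , A}}$. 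The identity disk $\comp{} \, I \tuple*{m , m}$ moves past the arrow components by the naturality of $β$ on proarrows, and becomes $\comp{} \, J \tuple*{m , m}$.

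\emph{Main obstacle.} The inverse disks are the delicate point: the modification law is stated for $γ \tuple*{p , q}$, not for its inverse. To handle this, we would first derive the inverted form of the law by pre- and post-composing the modification law with the inverse disks $\inv{γ \tuple*{m , A}}$ and $\inv{δ \tuple*{m , A}}$. Beyond that, the work is bookkeeping of the suppressed associators, which we treat up to $≅$ as elsewhere in the paper.
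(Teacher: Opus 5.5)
Your proof is correct and is essentially the paper's argument. You unfold $\di{\gamma} m$ and $\di{\delta} m$, slide $\mu$ across the two crossings by the modification proarrow-naturality law (one slide in the inverted form you derive, which the paper also needs but uses silently), and carry the arrow past the identity crossing using the arrow-dimension transformation on the two factorizations of $\tuple*{m , m}$. The only difference is that the paper writes the disks in their Yang--Baxter form, so its route passes through $\mu \tuple*{A , B}$ and swaps $\alpha$-squares, whereas yours passes through $\mu \tuple*{B , A}$ and swaps $\beta$-squares.

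One slip to fix: $\tuple*{B , m}$ and $\tuple*{m , A}$ are not composable. The two factorizations actually being compared are $\tuple*{m , m} \cong \procomp{\tuple*{B , m} , \tuple*{m , B}} \cong \procomp{\tuple*{m , A} , \tuple*{A , m}}$.
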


As in proposition \ref{theorem: transformation dummy dinatural is natural} for dinatural transformations,
a \refer{dimodification}
of \refer[dummy functor]{dummy difunctors}
``is'' just a \refer{modification}.

Those who find the $2$-dimensional algebra of dinatural transformations and dimodifications less than intuitive
may prefer to think of it as the geometric projection of a simpler $3$-dimensional algebra.
However, due to venue-imposed space restrictions,
a presentation of this surface diagram calculus is deferred to appendix \ref{section: dinaturality_diagrammatics}.

\paragraph{Related constructions}
Dubuc and Street introduced the concept of dinaturality for $1$-categories
and proved decategorified versions of the propositions about dinatural transformations above \cite{street-1970-dinatural_transformations}.
The term is an abbreviation for ``\underline{di}agonal naturality'',
presumably because the components of a dinatural transformation use the same element for both arguments
to the boundary functors.
It is by back-formation from ``dinatural transformation'' that we use ``difunctor'' for a functor whose domain is a dual pairing
and ``dimodification'' for a single-object-indexed natural square with a pair of dinatural transformation boundaries.
Admittedly, difunctors are not ``diagonal'' in the same sense as dinatural transformations and dimodifications,
but the parallel terminology is mnemonic, and if required can be justified as ``\underline{d}ual \underline{i}nput functors''.

Vidal and Tur extended the definition of dinatural transformation to $2$-categories,
including the pseudo, directed, and strict variants \cite{vidal-2010-2_categorical_institution}.
Their definition 2.25 seems to be missing morphism composition compatibility laws
corresponding to equations \eqref{proarrow dinatural transformation proarrow composition compatibility} above.
However, analogous laws are included later in their consideration of dinatural transformations of pseudo functors of $2$-categories.

%% file: content/parts/extranaturality.tex

The original motivation for investigating dinaturality for double categories
was in fact to study the phenomenon of extranaturality \cite{eilenberg-1966-functorial_calculus}.
Dubuc and Street decomposed extranatural transformations of $1$-functors
into certain simple dinatural transformations \cite{street-1970-dinatural_transformations}.
Street later extended this analysis to the setting of a monoidal bicategory with duals \cite{street-2003-functorial_calculus}.
We begin our double categorical exploration of extranaturality
by constructing its building blocks as dinatural transformations of difunctors of double categories.

Throughout this section let $\cat{C}$ be a \refer{monoidal double category}
with a monoidal functor $\dual{} : \hom{\co{\cat{C}}}{\cat{C}}$.
We define \refer[difunctor]{difunctors}
$I , \leftclass{\dualizer}{\tensor{,}} , \rightclass{\dualizer}{\tensor{,}} : \hom{\product{\co{\cat{C}} , \cat{C}}}{\cat{C}}$
as follows.
\[
	\tensor{} ≔ κ \tensor{}
	\quad , \quad
	\leftclass{\dualizer}{\tensor{,}} ≔ \comp{(\product{\dual{} , \id{}}) , \tensor{,}}
	\quad , \quad
	\rightclass{\dualizer}{\tensor{,}} ≔ \comp{(\product{\dual{} , \id{}}) , \braiding , \tensor{,}}
\]
That is, $\tensor{}$ is the constant functor to the monoidal unit,
and $\leftclass{\dualizer}{\tensor{,}}$ and $\rightclass{\dualizer}{\tensor{,}}$ send the pair
of a $\co{\cat{C}}$-square $ψ : \doublehom{p}{q}{j}{i}$
and a $\cat{C}$-square $φ : \doublehom{m}{n}{f}{g}$
respectively to the $\cat{C}$-squares
\[
	\begin{tikzpicture}[string diagram , x = {(12mm , 6mm)} , y = {(0mm , -16mm)} , z = {(10.5mm , 0mm)} , baseline=(align.base)]
			\coordinate (sheet) at (1/2 , 1/2 , 0) ;
			\draw [sheet]
				($ (sheet) + (-1/2 , -1/2 , 0) $) to coordinate [pos = 1/2] (top)
				($ (sheet) + (1/2 , -1/2 , 0) $) to coordinate [pos = 1/2] (right)
				($ (sheet) + (1/2 , 1/2 , 0) $) to coordinate [pos = 1/2] (bot)
				($ (sheet) + (-1/2 , 1/2 , 0) $) to coordinate [pos = 1/2] (left)
				cycle ;
			\draw [on sheet , name path = pro] (left) to node [fromleft] {\dual{p}} node [toright] {\dual{q}} (right) ;
			\draw [on sheet , name path = arr] (top) to node [fromabove] {\dual{j}} node [tobelow] {\dual{i}} (bot) ;
			\path [name intersections = {of = pro and arr , by = {cell}}] ;
			\node [bead] at (cell) {\dual{ψ}} ;
			\coordinate (sheet) at ($ (sheet) + (0 , 0 , 1) $) ;
			\draw [sheet]
				($ (sheet) + (-1/2 , -1/2 , 0) $) to coordinate [pos = 1/2] (top)
				($ (sheet) + (1/2 , -1/2 , 0) $) to coordinate [pos = 1/2] (right)
				($ (sheet) + (1/2 , 1/2 , 0) $) to coordinate [pos = 1/2] (bot)
				($ (sheet) + (-1/2 , 1/2 , 0) $) to coordinate [pos = 1/2] (left)
				cycle ;
			\draw [on sheet , name path = pro] (left) to node [fromleft] {m} node [toright] {n} (right) ;
			\draw [on sheet , name path = arr] (top) to node [fromabove] {f} node [tobelow] {g} (bot) ;
			\path [name intersections = {of = pro and arr , by = {cell}}] ;
			\node [bead] (align) at (cell) {φ} ;
	\end{tikzpicture}
	\qquad \text{and} \qquad
	\begin{tikzpicture}[string diagram , x = {(12mm , 6mm)} , y = {(0mm , -16mm)} , z = {(10.5mm , 0mm)} , baseline=(align.base)]
			\coordinate (sheet) at (1/2 , 1/2 , 0) ;
			\draw [sheet]
				($ (sheet) + (-1/2 , -1/2 , 0) $) to coordinate [pos = 1/2] (top)
				($ (sheet) + (1/2 , -1/2 , 0) $) to coordinate [pos = 1/2] (right)
				($ (sheet) + (1/2 , 1/2 , 0) $) to coordinate [pos = 1/2] (bot)
				($ (sheet) + (-1/2 , 1/2 , 0) $) to coordinate [pos = 1/2] (left)
				cycle ;
			\draw [on sheet , name path = pro] (left) to node [fromleft] {m} node [toright] {n} (right) ;
			\draw [on sheet , name path = arr] (top) to node [fromabove] {f} node [tobelow] {g} (bot) ;
			\path [name intersections = {of = pro and arr , by = {cell}}] ;
			\node [bead] at (cell) {φ} ;
			\coordinate (sheet) at ($ (sheet) + (0 , 0 , 1) $) ;
			\draw [sheet]
				($ (sheet) + (-1/2 , -1/2 , 0) $) to coordinate [pos = 1/2] (top)
				($ (sheet) + (1/2 , -1/2 , 0) $) to coordinate [pos = 1/2] (right)
				($ (sheet) + (1/2 , 1/2 , 0) $) to coordinate [pos = 1/2] (bot)
				($ (sheet) + (-1/2 , 1/2 , 0) $) to coordinate [pos = 1/2] (left)
				cycle ;
			\draw [on sheet , name path = pro] (left) to node [fromleft] {\dual{p}} node [toright] {\dual{q}} (right) ;
			\draw [on sheet , name path = arr] (top) to node [fromabove] {\dual{j}} node [tobelow] {\dual{i}} (bot) ;
			\path [name intersections = {of = pro and arr , by = {cell}}] ;
			\node [bead] (align) at (cell) {\dual{ψ}} ;
	\end{tikzpicture}
\]

\begin{definition}[cap and cup folds]
	We call a pair of (proarrow-dimension pseudo) \refer[dinatural transformation]{dinatural transformations}
	\[
		η : \prohom<dinatural>[(\hom{\product{\co{\cat{C}} , \cat{C}}}{\cat{C}})]{\tensor{}}{\leftclass{\dualizer}{\tensor{,}}}
		\quad \text{and} \quad
		ε : \prohom<dinatural>[(\hom{\product{\co{\cat{C}} , \cat{C}}}{\cat{C}})]{\rightclass{\dualizer}{\tensor{,}}}{\tensor{}}
	\]
	respectively a \define{cap} and a \define{cup} for $\tuple*{\cat{C} , \dual{}}$,
	which we refer to collectively as \define[fold]{folds}.
\end{definition}

Expanding the definitions, caps and cups have the following structure.
For $A : \cat{C}$
we have the following \refer[proarrow dinatural transformation object-component proarrow]{object-component proarrows},
with their string diagrams in the $\tuple*{\tensor{,} ,  \procomp{,}}$-plane shown below.
\[
	η A : \prohom{\tensor{}}{\tensor{\dual{A} , A}}
	\quad \text{and} \quad
	ε A : \prohom{\tensor{A , \dual{A}}}{\tensor{}}
\]
\vspace{-1ex}
\[

\]
For $f : \hom[\cat{C}]{A}{B}$
we have the following \refer[proarrow dinatural transformation arrow-component square]{arrow-component squares},
with their deprojected surface diagrams shown below.
\[
	%
\]
For $m : \prohom[\cat{C}]{A}{B}$
we have the following invertible \refer[proarrow dinatural transformation proarrow-component disk]{proarrow-component disks}.
\[
	%
\]

Preservation of binary arrow composition \eqref{proarrow dinatural transformation arrow composition preservation}
says that a fold intersected by two consecutive arrow lines has a unique interpretation,
while compatibility with binary proarrow composition \eqref{proarrow dinatural transformation proarrow composition compatibility}
says that a fold intersected by two consecutive proarrow lines has a unique interpretation.
\[
	%
\]

Preservation of nullary arrow composition \eqref{proarrow dinatural transformation arrow composition preservation} and
compatibility with nullary proarrow composition \eqref{proarrow dinatural transformation proarrow composition compatibility}
together imply that a ``blank'' cap or cup surface diagram has a unique interpretation.
\[
	%
\]

Naturality for squares \eqref{proarrow dinatural transformation square naturality}
says that a square with one of its arrow and proarrow boundaries intersecting a fold can be ``flipped over'' to the other side.
\[
	\makebox[\textwidth][c]{$ 
	%
	$} 
\]

The cap and cup folds are dinatural transformations that ``bend surfaces around'',
but preserve structures embedded within those surfaces, at least to the extent that the functor $\dual{}$ does.
\refer[transformation diagonalization]{Diagonalizing} the \refer[identity natural transformation]{identity natural transformations}
on the \refer[dummy functor]{dummy difunctors}
$π_1 \, , \, \comp{π_0 , \dual{}} : \hom{\product{\co{\cat{C}} , \cat{C}}}{\cat{C}}$
yields dinatural transformations that preserve the orientation of a surface as well as the structures embedded therein.

\begin{definition}[strips]
	We call the following \refer[dummy dinatural transformation]{dummy dinatural transformations}\footnote
	{
		While  $ι$ is dummy in its first argument, as in corollary \ref{theorem: dummy dinatural transformations},
		$\dual{ι}$ is dummy in its second argument.
	}
	\define[strip]{strips}.
	\[
		 ι ≔ \di{\procomp{}} \, π_1 :
		 \prohom<dinatural>[(\hom{\product{\co{\cat{C}} , \cat{C}}}{\cat{C}})]{π_1}{π_1}
		\quad \text{and} \quad
		\dual{ι} ≔ \di{\procomp{}} \, (\comp{π_0 , \dual{}}) :
		\prohom<dinatural>[(\hom{\product{\co{\cat{C}} , \cat{C}}}{\cat{C}})]{\comp{π_0 , \dual{}}}{\comp{π_0 , \dual{}}}
	\]
\end{definition}

The strips $ι$ and $\dual{ι}$ have
\refer[proarrow dinatural transformation object-component proarrow]{object-component proarrows}
$\procomp{} \, A$ and $\procomp{} \, \dual{A}$,
\refer[proarrow dinatural transformation arrow-component square]{arrow-component squares}
$\procomp{} \, f$ and $\procomp{} \, \dual{f}$, and
\refer[proarrow dinatural transformation proarrow-component disk]{proarrow-component disks} (isomorphic to)
$\comp{} \, m$ and $\comp{} \, \dual{m}$, respectively.
Preservation of arrow composition \eqref{proarrow dinatural transformation arrow composition preservation}
and compatibility with proarrow composition \eqref{proarrow dinatural transformation proarrow composition compatibility}
say that there is a unique interpretation of a strip containing multiple arrows or proarrows,
while naturality for squares \eqref{proarrow dinatural transformation square naturality} says
\[
	\begin{tikzpicture}[string diagram , x = {(12mm , 6mm)} , y = {(0mm , -16mm)} , z = {(10.5mm , 0mm)} , baseline=(align.base)]
			\coordinate (sheet) at (1/2 , 1/2 , 0) ;
			\draw [sheet]
				($ (sheet) + (-1/2 , -1/2 , 0) $) to coordinate [pos = 1/3] (top)
				($ (sheet) + (1/2 , -1/2 , 0) $) to coordinate [pos = 3/4] (right)
				($ (sheet) + (1/2 , 1/2 , 0) $) to coordinate [pos = 2/3] (bot)
				($ (sheet) + (-1/2 , 1/2 , 0) $) to coordinate [pos = 3/4] (left)
				cycle ;
			\draw [on sheet , name path = pro] (left) .. controls +(1/2 , 0 , 0) and +(-1/2 , 0 , 0) .. node [fromleft] {m} node [toright] {n} (right) ;
			\draw [on sheet , name path = arr] (top) to node [fromabove] {f} node [tobelow] {g} (bot) ;
			\path [name intersections = {of = pro and arr , by = {cell}}] ;
			\node [bead]at (cell) {φ} ;
			\node (align) at (sheet) {} ;
	\end{tikzpicture}
	\; = \;
	\begin{tikzpicture}[string diagram , x = {(12mm , 6mm)} , y = {(0mm , -16mm)} , z = {(10.5mm , 0mm)} , baseline=(align.base)]
			\coordinate (sheet) at (1/2 , 1/2 , 0) ;
			\draw [sheet]
				($ (sheet) + (-1/2 , -1/2 , 0) $) to coordinate [pos = 2/3] (top)
				($ (sheet) + (1/2 , -1/2 , 0) $) to coordinate [pos = 3/4] (right)
				($ (sheet) + (1/2 , 1/2 , 0) $) to coordinate [pos = 1/3] (bot)
				($ (sheet) + (-1/2 , 1/2 , 0) $) to coordinate [pos = 3/4] (left)
				cycle ;
			\draw [on sheet , name path = pro] (left) .. controls +(1/2 , 0 , 0) and +(-1/2 , 0 , 0) .. node [fromleft] {m} node [toright] {n} (right) ;
			\draw [on sheet , name path = arr] (top) to node [fromabove] {f} node [tobelow] {g} (bot) ;
			\path [name intersections = {of = pro and arr , by = {cell}}] ;
			\node [bead]at (cell) {φ} ;
			\node (align) at (sheet) {} ;
	\end{tikzpicture}
	\quad \text{and} \quad
	\begin{tikzpicture}[string diagram , x = {(12mm , 6mm)} , y = {(0mm , -16mm)} , z = {(10.5mm , 0mm)} , baseline=(align.base)]
			\coordinate (sheet) at (1/2 , 1/2 , 0) ;
			\draw [sheet]
				($ (sheet) + (-1/2 , -1/2 , 0) $) to coordinate [pos = 1/3] (top)
				($ (sheet) + (1/2 , -1/2 , 0) $) to coordinate [pos = 1/4] (right)
				($ (sheet) + (1/2 , 1/2 , 0) $) to coordinate [pos = 2/3] (bot)
				($ (sheet) + (-1/2 , 1/2 , 0) $) to coordinate [pos = 1/4] (left)
				cycle ;
			\draw [on sheet , name path = pro] (left) .. controls +(1/2 , 0 , 0) and +(-1/2 , 0 , 0) .. node [fromleft] {\dual{m}} node [toright] {\dual{n}} (right) ;
			\draw [on sheet , name path = arr] (top) to node [fromabove] {\dual{g}} node [tobelow] {\dual{f}} (bot) ;
			\path [name intersections = {of = pro and arr , by = {cell}}] ;
			\node [bead]at (cell) {\dual{φ}} ;
			\node (align) at (sheet) {} ;
	\end{tikzpicture}
	\; = \;
	\begin{tikzpicture}[string diagram , x = {(12mm , 6mm)} , y = {(0mm , -16mm)} , z = {(10.5mm , 0mm)} , baseline=(align.base)]
			\coordinate (sheet) at (1/2 , 1/2 , 0) ;
			\draw [sheet]
				($ (sheet) + (-1/2 , -1/2 , 0) $) to coordinate [pos = 2/3] (top)
				($ (sheet) + (1/2 , -1/2 , 0) $) to coordinate [pos = 1/4] (right)
				($ (sheet) + (1/2 , 1/2 , 0) $) to coordinate [pos = 1/3] (bot)
				($ (sheet) + (-1/2 , 1/2 , 0) $) to coordinate [pos = 1/4] (left)
				cycle ;
			\draw [on sheet , name path = pro] (left) .. controls +(1/2 , 0 , 0) and +(-1/2 , 0 , 0) .. node [fromleft] {\dual{m}} node [toright] {\dual{n}} (right) ;
			\draw [on sheet , name path = arr] (top) to node [fromabove] {\dual{g}} node [tobelow] {\dual{f}} (bot) ;
			\path [name intersections = {of = pro and arr , by = {cell}}] ;
			\node [bead]at (cell) {\dual{φ}} ;
			\node (align) at (sheet) {} ;
	\end{tikzpicture}
\]

Caps, cups, and strips constitute the basic building blocks of extranaturality.
We call these dinatural transformations \define[connector]{basic connectors}.
By taking tensor products of basic connectors,
we can define compound connectors with object-component proarrows such as
\[
	(\tensor{ε , ι , η}) \, \tuple*{A , B , C} : \prohom{\tensor{A , \dual{A} , B}}{\tensor{B , \dual{C} , C}}
\]

Unlike dinatural transformations in general, compatible configurations of these connectors can be composed.
Moreover, by proposition \ref{theorem: transformation dummy dinatural is natural},
in certain important cases they compose to ordinary natural transformations.

\begin{proposition}[zigzag natural transformations]  \label{theorem: zigzag natural transformations}
	For a monoidal double category $\cat{C}$
	with \refer{cap} $η$ and \refer{cup} $ε$
	there are (proarrow-dimension pseudo) \refer[natural transformation]{natural transformations}
	\[
		S : \prohom[\hom[]{\cat{C}}{\cat{C}}]{\comp{}}{\comp{}}
		\quad \text{and} \quad
		Z : \prohom[\hom[]{\co{\cat{C}}}{\cat{C}}]{\dual{}}{\dual{}}
	\]
	that we call \define[zigzag]{zigzags}
	with the following structure.
	\begin{description}
		\item[{\refer[proarrow natural transformation object-component proarrow]{object-component proarrows}}:]
			for $A : \cat{C}$, we define $S A$ and $Z A$ as
			\[
				\procomp{(\tensor{ι A , η A}) , (\tensor{ε A , ι A})} : \prohom[\cat{C}]{A}{A}
				\quad \text{and} \quad
				\procomp{(\tensor{η A , \dual{ι} A}) , (\tensor{\dual{ι} A , ε A})} : \prohom[\cat{C}]{\dual{A}}{\dual{A}}
			\]
			\[
				\begin{tikzpicture}[string diagram , x = {(12mm , 0mm)} , y = {(0mm , -8mm)} , baseline=(align.base)]
					\coordinate (l) at (0 , 1/2) ;
					\coordinate (c) at (1/2 , 1/2) ;
					\coordinate (r) at (1 , 1/2) ;
					\draw
						(l |- 0 , 0) to [out = south , in = north] node [fromabove] {A}
						(l |- c) to [out = south , looseness = 2 , in = south] coordinate [pos = 1/2] (counit)
						(c) to [out = north , looseness = 2 , in = north] coordinate [pos = 1/2] (unit)
						(r |- c) to [out = south , in = north] node [tobelow] {A}
						(r |- 1 , 1) ;
					\node [bead] at (counit) {ε A} ;
					\node [bead] at (unit) {η A} ;
					\node (align) at (c) {} ;
				\end{tikzpicture}
				\qquad \text{,} \qquad
				\begin{tikzpicture}[string diagram , x = {(12mm , 0mm)} , y = {(0mm , -8mm)} , baseline=(align.base)]
					\coordinate (l) at (0 , 1/2) ;
					\coordinate (c) at (1/2 , 1/2) ;
					\coordinate (r) at (1 , 1/2) ;
					\draw
						(r |- 0 , 0) to [out = south , in = north] node [fromabove] {\dual{A}}
						(r |- c) to [out = south , looseness = 2 , in = south] coordinate [pos = 1/2] (counit)
						(c) to [out = north , looseness = 2 , in = north] coordinate [pos = 1/2] (unit)
						(l |- c) to [out = south , in = north] node [tobelow] {\dual{A}}
						(l |- 1 , 1) ;
					\node [bead] at (counit) {ε A} ;
					\node [bead] at (unit) {η A} ;
					\node (align) at (c) {} ;
				\end{tikzpicture}
			\]
		\item[{\refer[proarrow natural transformation arrow-component square]{arrow-component squares}}:]
			for $f : \hom[\cat{C}]{A}{B}$ we define $S f$ and $Z f$ as
			\[
				\begin{tikzpicture}[string diagram , x = {(16mm , 0mm)} , y = {(0mm , -12mm)} , baseline=(align.base)]
					\coordinate (unit) at (1/4 , 1/4) ;
					\coordinate (counit) at (3/4 , 3/4) ;
					\coordinate (arr) at (1/2 , 1/2) ;
					\draw [name path = cap] (0 , 0 |- unit) to node [fromleft] {\tensor{ι A , η A}} node [toright] {\tensor{ι B , η B}} (1 , 1 |- unit) ;
					\draw [name path = cup] (0 , 0 |- counit) to node [fromleft] {\tensor{ε A , ι A}} node [toright] {\tensor{ε B , ι B}} (1 , 1 |- counit) ;
					\draw [name path = arrow] (arr |- 0 , 0) to node [fromabove] {f} node [tobelow] {f} (arr |- 1 , 1) ;
					\path [name intersections = {of = cap and arrow , by = {cell1}}] ;
					\path [name intersections = {of = cup and arrow , by = {cell2}}] ;
					\node [bead] at (cell1) {\tensor{ι f , η f}} ;
					\node [bead] at (cell2) {\tensor{ε f , ι f}} ;
					\node (align) at (1/2 , 1/2) {} ;
				\end{tikzpicture}
				\quad \text{and} \quad
				\begin{tikzpicture}[string diagram , x = {(16mm , 0mm)} , y = {(0mm , -12mm)} , baseline=(align.base)]
					\coordinate (unit) at (1/4 , 1/4) ;
					\coordinate (counit) at (3/4 , 3/4) ;
					\coordinate (arr) at (1/2 , 1/2) ;
					\draw [name path = cap] (0 , 0 |- unit) to node [fromleft] {\tensor{η A ,  \dual{ι} A}} node [toright] {\tensor{η B , \dual{ι} B}} (1 , 1 |- unit) ;
					\draw [name path = cup] (0 , 0 |- counit) to node [fromleft] {\tensor{ \dual{ι} A , ε A}} node [toright] {\tensor{\dual{ι} B , ε B}} (1 , 1 |- counit) ;
					\draw [name path = arrow] (arr |- 0 , 0) to node [fromabove] {\dual{f}} node [tobelow] {\dual{f}} (arr |- 1 , 1) ;
					\path [name intersections = {of = cap and arrow , by = {cell1}}] ;
					\path [name intersections = {of = cup and arrow , by = {cell2}}] ;
					\node [bead] at (cell1) {\tensor{η f ,  \dual{ι} f}} ;
					\node [bead] at (cell2) {\tensor{\dual{ι} f , ε f}} ;
					\node (align) at (1/2 , 1/2) {} ;
				\end{tikzpicture}
			\]
			\vspace{-2mm}
			\[
				\begin{tikzpicture}[string diagram , x = {(18mm , 6mm)} , y = {(0mm , -24mm)} , z = {(18mm , 0mm)} , baseline=(align.base)]
					\coordinate (sheet) at (1/2 , 1/2 , 0) ;
					\coordinate (left top) at ($ (sheet) + (-1/2 , -1/2 , -1/2) $) ;
					\coordinate (right top) at ($ (sheet) + (1/2 , -1/2 , -1/2) $) ;
					\coordinate (left cup) at ($ (sheet) + (-1/2 , 1/4 , -1/4) $) ;
					\coordinate (right cup) at ($ (sheet) + (1/2 , 1/4 , -1/4) $) ;
					\coordinate (left cap) at ($ (sheet) + (-1/2 , -1/4 , 1/4) $) ;
					\coordinate (right cap) at ($ (sheet) + (1/2 , -1/4 , 1/4) $) ;
					\coordinate (left bot) at ($ (sheet) + (-1/2 , 1/2 , 1/2) $) ;
					\coordinate (right bot) at ($ (sheet) + (1/2 , 1/2 , 1/2) $) ;
					\draw [sheet]
						(left top) to coordinate [pos = 1/4] (label) coordinate [pos = 1/2] (top)
						(right top) .. controls +(0 , 1/3 , 0) and +(0 , 0 , -1/3) ..
						(right cup) to coordinate [pos = 1/2] (bot)
						(left cup) .. controls +(0 , 0 , -1/3) and +(0 , 1/3 , 0) ..
						cycle ;
					\draw [on sheet] (top)  .. controls +(0 , 1/3 , 0) and +(0 , 0 , -1/3) .. node [fromabove] {f} (bot) ;
					\node [anchor = north] at (label) {A} ;
					\draw [sheet]
						(left cup) to coordinate [pos = 1/2] (top)
						(right cup) .. controls +(0 , 0 , 1/3) and +(0 , 0 , -1/3) ..
						(right cap) to coordinate [pos = 1/2] (bot)
						(left cap) .. controls +(0 , 0 , -1/3) and +(0 , 0 , 1/3) ..
						cycle ;
					\draw [on sheet] (top) .. controls +(0 , 0 , 1/3) and +(0 , 0 , -1/3) .. node [pos = 1/2 , anchor = north east] {\dual{f}} (bot) ;
					\draw [sheet]
						(left cap) to coordinate [pos = 1/2] (top)
						(right cap) .. controls +(0 , 0 , 1/3) and +(0 , -1/3 , 0) ..
						(right bot) to coordinate [pos = 1/4] (label) coordinate [pos = 1/2] (bot)
						(left bot) .. controls +(0 , -1/3 , 0) and +(0 , 0 , 1/3) ..
						cycle ;
					\draw [on sheet] (top)  .. controls +(0 , 0 , 1/3) and +(0 , -1/3 , 0) .. node [tobelow] {f} (bot) ;
					\node [anchor = south] at (label) {B} ;
					\node (align) at ($ (sheet) + (0 , 0 , 0) $) {} ;
				\end{tikzpicture}
				\qquad \text{,} \qquad
				\begin{tikzpicture}[string diagram , x = {(18mm , 6mm)} , y = {(0mm , -24mm)} , z = {(18mm , 0mm)} , baseline=(align.base)]
					\coordinate (sheet) at (1/2 , 1/2 , 0) ;
					\coordinate (left bot) at ($ (sheet) + (-1/2 , 1/2 , -1/2) $) ;
					\coordinate (right bot) at ($ (sheet) + (1/2 , 1/2 , -1/2) $) ;
					\coordinate (left cap) at ($ (sheet) + (-1/2 , -1/4 , -1/4) $) ;
					\coordinate (right cap) at ($ (sheet) + (1/2 , -1/4 , -1/4) $) ;
					\coordinate (left cup) at ($ (sheet) + (-1/2 , 1/4 , 1/4) $) ;
					\coordinate (right cup) at ($ (sheet) + (1/2 , 1/4 , 1/4) $) ;
					\coordinate (left top) at ($ (sheet) + (-1/2 , -1/2 , 1/2) $) ;
					\coordinate (right top) at ($ (sheet) + (1/2 , -1/2 , 1/2) $) ;
					\draw [sheet]
						(left bot) to coordinate [pos = 1/4] (label) coordinate [pos = 1/2] (top)
						(right bot) .. controls +(0 , -1/3 , 0) and +(0 , 0 , -1/3) ..
						(right cap) to coordinate [pos = 1/2] (bot)
						(left cap) .. controls +(0 , 0 , -1/3) and +(0 , -1/3 , 0) ..
						cycle ;
					\draw [on sheet] (top) .. controls +(0 , -1/3 , 0) and +(0 , 0 , -1/3) .. node [frombelow] {\dual{f}} (bot) ;
					\node [anchor = south] at (label) {\dual{A}} ;
					\draw [sheet]
						(left cap) to coordinate [pos = 1/2] (top)
						(right cap) .. controls +(0 , 0 , 1/3) and +(0 , 0 , -1/3) ..
						(right cup) to coordinate [pos = 1/2] (bot)
						(left cup) .. controls +(0 , 0 , -1/3) and +(0 , 0 , 1/3) ..
						cycle ;
					\draw [on sheet] (top) .. controls +(0 , 0 , 1/3) and +(0 , 0 , -1/3) .. node [pos = 1/2 , anchor = east] {f} (bot) ;
					\draw [sheet]
						(left cup) to coordinate [pos = 1/2] (top)
						(right cup) .. controls +(0 , 0 , 1/3) and +(0 , 1/3 , 0) ..
						(right top) to coordinate [pos = 1/4] (label) coordinate [pos = 1/2] (bot)
						(left top) .. controls +(0 , 1/3 , 0) and +(0 , 0 , 1/3) ..
						cycle ;
					\draw [on sheet] (top) .. controls +(0 , 0 , 1/3) and +(0 , 1/3 , 0) .. node [toabove] {\dual{f}} (bot) ;
					\node [anchor = north] at (label) {\dual{B}} ;
					\node (align) at ($ (sheet) + (0 , 0 , 0) $) {} ;
				\end{tikzpicture}
			\]
		\item[{\refer[proarrow natural transformation proarrow-component disk]{proarrow-component disks}}:]
			for $m : \prohom[\cat{C}]{A}{B}$ we define $S m$ and $Z m$ as\footnote
			{
				An \refer{oplax dinatural transformation} dummy in its second argument corresponds to a \refer{lax natural transformation}.
				But since all of our transformations are pseudo, this makes no difference here.
			}
			\[
				\makebox[\textwidth][c]{$ 
				\begin{tikzpicture}[string diagram , x = {(48mm , 0mm)} , y = {(0mm , -16mm)} , baseline=(align.base)]
					\coordinate (unit) at (5/8 , 1/4) ;
					\coordinate (counit) at (3/8 , 3/4) ;
					\coordinate (pro in) at (0 , -1/8) ;
					\coordinate (pro out) at (1 , 9/8) ;
					\draw [name path = cap] (0 , 0 |- unit) to node [fromleft] {\tensor{ι B , η B}} node [toright] {\tensor{ι A , η A}} (1 , 1 |- unit) ;
					\draw [name path = cup] (0 , 0 |- counit) to node [fromleft] {\tensor{ε B , ι B}} node [toright] {\tensor{ε A , ι A}} (1 , 1 |- counit) ;
					\draw [name path = left] (pro in) to [out = east , out looseness = 1.5 , in = west] node [fromleft] {m} (counit) ;
					\draw [name path = middle] (counit) to [out = east , in = west] (unit) ;
					\draw [name path = right] (unit) to [out = east , in looseness = 1.5 , in = west] node [toright] {m} (pro out) ;
					\path [name intersections = {of = cap and left , by = {interchanger1}}] ;
					\path [name intersections = {of = cup and right , by = {interchanger2}}] ;
					\node [bead] at (counit) {\tensor{ε m , ι B}} ;
					\node [bead] at (unit) {\tensor{ι A , η m}} ;
					\node [bead] at (interchanger1) {\comp{} \tuple*{\tensor{m , η B}}} ;
					\node [bead] at (interchanger2) {\comp{} \tuple*{\tensor{ε A , m}}} ;
					\node (align) at (1/2 , 1/2) {} ;
				\end{tikzpicture}
				\quad \text{and} \quad
				\begin{tikzpicture}[string diagram , x = {(48mm , 0mm)} , y = {(0mm , -16mm)} , baseline=(align.base)]
					\coordinate (unit) at (3/8 , 1/4) ;
					\coordinate (counit) at (5/8 , 3/4) ;
					\coordinate (pro in) at (0 , 9/8) ;
					\coordinate (pro out) at (1 , -1/8) ;
					\draw [name path = cap] (0 , 0 |- unit) to node [fromleft] {\tensor{η B , \dual{ι} B}} node [toright] {\tensor{η A , \dual{ι} A}} (1 , 1 |- unit) ;
					\draw [name path = cup] (0 , 0 |- counit) to node [fromleft] {\tensor{\dual{ι} B , ε B}} node [toright] {\tensor{\dual{ι} A , ε A}} (1 , 1 |- counit) ;
					\draw [name path = left] (pro in) to [out = east , out looseness = 1.5 , in = west] node [fromleft] {\dual{m}} (unit) ;
					\draw [name path = middle] (unit) to [out = east , in = west] (counit) ;
					\draw [name path = right] (counit)  to [out = east , in looseness = 1.5 , in = west] node [toright] {\dual{m}} (pro out) ;
					\path [name intersections = {of = cup and left , by = {interchanger1}}] ;
					\path [name intersections = {of = cap and right , by = {interchanger2}}] ;
					\node [bead] at (unit) {\tensor{η m , \dual{ι} B}} ;
					\node [bead] at (counit) {\tensor{\dual{ι} A , ε m}} ;
					\node [bead] at (interchanger1) {\comp{} \tuple*{\tensor{\dual{m} , ε B}}} ;
					\node [bead] at (interchanger2) {\comp{} \tuple*{\tensor{η A , \dual{m}}}} ;
					\node (align) at (1/2 , 1/2) {} ;
				\end{tikzpicture}
				$} 
			\]
			\vspace{-2mm}
			\[
				\begin{tikzpicture}[string diagram , x = {(18mm , 6mm)} , y = {(0mm , -24mm)} , z = {(18mm , 0mm)} , baseline=(align.base)]
					\coordinate (sheet) at (1/2 , 1/2 , 0) ;
					\coordinate (left top) at ($ (sheet) + (-1/2 , -1/2 , -1/2) $) ;
					\coordinate (right top) at ($ (sheet) + (1/2 , -1/2 , -1/2) $) ;
					\coordinate (left cup) at ($ (sheet) + (-1/2 , 1/4 , -1/4) $) ;
					\coordinate (right cup) at ($ (sheet) + (1/2 , 1/4 , -1/4) $) ;
					\coordinate (left cap) at ($ (sheet) + (-1/2 , -1/4 , 1/4) $) ;
					\coordinate (right cap) at ($ (sheet) + (1/2 , -1/4 , 1/4) $) ;
					\coordinate (left bot) at ($ (sheet) + (-1/2 , 1/2 , 1/2) $) ;
					\coordinate (right bot) at ($ (sheet) + (1/2 , 1/2 , 1/2) $) ;
					\draw [sheet]
						(left top) to coordinate [pos = 1/2] (label)
						(right top) .. controls +(0 , 1/3 , 0) and +(0 , 0 , -1/3) ..
						(right cup) to coordinate [pos = 2/3] (right)
						(left cup) .. controls +(0 , 0 , -1/3) and +(0 , 1/3 , 0) .. coordinate [pos = 7/8] (left)
						cycle ;
					\draw [on sheet , name path = pro] (left) .. controls +(1/3 , 0 , 0) and +(-1/6 , 0 , -1/3) .. node [fromleft] {m} (right) ;
					\node [anchor = north] at (label) {A} ;
					\draw [sheet]
						(left cup) to coordinate [pos = 1/3] (left)
						(right cup) .. controls +(0 , 0 , 1/3) and +(0 , 0 , -1/3) ..
						(right cap) to coordinate [pos = 1/3] (right)
						(left cap) .. controls +(0 , 0 , -1/3) and +(0 , 0 , 1/3) ..
						cycle ;
					\draw [on sheet , name path = pro] (left) .. controls +(1/6 , 0 , 1/3) and +(-1/6 , 0 , -1/3) .. node [pos = 1/2 , anchor = north east] {\dual{m}} (right) ;
					\draw [sheet]
						(left cap) to coordinate [pos = 2/3] (left)
						(right cap) .. controls +(0 , 0 , 1/3) and +(0 , -1/3 , 0) .. coordinate [pos = 7/8] (right)
						(right bot) to coordinate [pos = 1/2] (label)
						(left bot) .. controls +(0 , -1/3 , 0) and +(0 , 0 , 1/3) ..
						cycle ;
					\draw [on sheet , name path = pro] (left) .. controls +(1/6 , 0 , 1/3) and +(-1/3 , 0 , 0) .. node [toright] {m} (right) ;
					\node [anchor = south] at (label) {B} ;
					\node (align) at ($ (sheet) + (0 , 0 , 0) $) {} ;
				\end{tikzpicture}
				\qquad \text{,} \qquad
				\begin{tikzpicture}[string diagram , x = {(18mm , 6mm)} , y = {(0mm , -24mm)} , z = {(18mm , 0mm)} , baseline=(align.base)]
					\coordinate (sheet) at (1/2 , 1/2 , 0) ;
					\coordinate (left bot) at ($ (sheet) + (-1/2 , 1/2 , -1/2) $) ;
					\coordinate (right bot) at ($ (sheet) + (1/2 , 1/2 , -1/2) $) ;
					\coordinate (left cap) at ($ (sheet) + (-1/2 , -1/4 , -1/4) $) ;
					\coordinate (right cap) at ($ (sheet) + (1/2 , -1/4 , -1/4) $) ;
					\coordinate (left cup) at ($ (sheet) + (-1/2 , 1/4 , 1/4) $) ;
					\coordinate (right cup) at ($ (sheet) + (1/2 , 1/4 , 1/4) $) ;
					\coordinate (left top) at ($ (sheet) + (-1/2 , -1/2 , 1/2) $) ;
					\coordinate (right top) at ($ (sheet) + (1/2 , -1/2 , 1/2) $) ;
					\draw [sheet]
						(left bot) to coordinate [pos = 1/2] (label)
						(right bot) .. controls +(0 , -1/3 , 0) and +(0 , 0 , -1/3) ..
						(right cap) to coordinate [pos = 2/3] (right)
						(left cap) .. controls +(0 , 0 , -1/3) and +(0 , -1/3 , 0) .. coordinate [pos = 7/8] (left)
						cycle ;
					\draw [on sheet , name path = pro] (left) .. controls +(1/3 , 0 , 0) and +(-1/6 , 0 , -1/3) .. node [fromleft] {\dual{m}} (right) ;
					\node [anchor = south] at (label) {\dual{A}} ;
					\draw [sheet]
						(left cap) to coordinate [pos = 1/3] (left)
						(right cap) .. controls +(0 , 0 , 1/3) and +(0 , 0 , -1/3) ..
						(right cup) to coordinate [pos = 1/3] (right)
						(left cup) .. controls +(0 , 0 , -1/3) and +(0 , 0 , 1/3) ..
						cycle ;
					\draw [on sheet , name path = pro] (left) .. controls +(1/6 , 0 , 1/3) and +(-1/6 , 0 , -1/3) .. node [pos = 1/2 , anchor = east] {m} (right) ;
					\draw [sheet]
						(left cup) to coordinate [pos = 2/3] (left)
						(right cup) .. controls +(0 , 0 , 1/3) and +(0 , 1/3 , 0) .. coordinate [pos = 7/8] (right)
						(right top) to coordinate [pos = 1/2] (label)
						(left top) .. controls +(0 , 1/3 , 0) and +(0 , 0, 1/3) ..
						cycle ;
					\draw [on sheet , name path = pro] (left) .. controls +(1/6 , 0 , 1/3) and +(-1/3 , 0 , 0) .. node [toright] {\dual{m}} (right) ;
					\node [anchor = north] at (label) {\dual{B}} ;
					\node (align) at ($ (sheet) + (0 , 0 , 0) $) {} ;
				\end{tikzpicture}
				\tag*{\hyperlink{proof: zigzag natural transformations}{\qedsymbol}}
			\]
	\end{description}
\end{proposition}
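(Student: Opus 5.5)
The plan is to build each zigzag as a composite of dinatural transformations, show that composite is again a dinatural transformation of dummy difunctors, and then invoke proposition \ref{theorem: transformation dummy dinatural is natural}. I treat $S$ in detail; $Z$ is the same argument in the proarrow-dimension reflection, with the roles of $η$ and $ε$ exchanged along the $\tensor{,}$ direction.

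The first step is to form compound connectors as monoidal products of dinatural transformations. For $\tensor{ι , η}$ I would check, factor by factor, that tensoring two dinatural transformations gives a dinatural transformation; here the indexing category is the dual pairing of $\product{\cat{C} , \cat{C}}$, and the difunctors are things like $\tensor{π_1 , \leftclass{\dualizer}{\tensor{,}}}$. Every defining law of a dinatural transformation is preserved under $\tensor{,}$, because $\tensor{,}$ is a functor and therefore preserves both compositions and interchange. In the zigzag the two factors are then restricted along the diagonal, so both are indexed by the same $A$. The result is $\tensor{ι , η} : \prohom<dinatural>{\tensor{π_1 , \tensor{}}}{\tensor{π_1 , \dual{π_0} , π_1}}$ and $\tensor{ε , ι} : \prohom<dinatural>{\tensor{π_1 , \dual{π_0} , π_1}}{\tensor{\tensor{} , π_1}}$, with $π_0$ and $π_1$ the two projections out of $\product{\co{\cat{C}} , \cat{C}}$. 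The outer functors $\tensor{π_1 , \tensor{}} ≅ π_1 ≅ \tensor{\tensor{} , π_1}$ are dummy in the first argument, up to unitors that I suppress.

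The key step is to show that these two connectors compose, even though dinatural transformations do not compose in general. The intermediate difunctor $\tensor{π_1 , \dual{π_0} , π_1}$ is precisely where the obstruction in the discussion before proposition \ref{theorem: dinatural-natural composition} would arise. Here, however, the cap and the cup touch disjoint tensor factors of $\tensor{A , \dual{A} , A}$. So the missing proarrow $\prohom{G \tuple*{A , B}}{G \tuple*{B , A}}$ is not needed: the crossing of a $\co{\cat{C}}$ proarrow with a $\cat{C}$ proarrow in the middle factor is just an interchange identity $\comp{} \tuple*{\tensor{m , η B}}$ or $\comp{} \tuple*{\tensor{ε A , m}}$.

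Concretely, I would define $S A$, $S f$ and $S m$ exactly as displayed in the statement. Then I would verify the laws of a dinatural transformation of $π_1$ to itself, equivalently by proposition \ref{theorem: transformation dummy dinatural is natural} the laws of a natural transformation $\prohom{\comp{}}{\comp{}}$.
\begin{itemize}
\item Preservation of arrow composition follows immediately, since both connector arrow components preserve it and $\tensor{,}$ is functorial.
\item Naturality for squares follows by sliding a square $φ$ first across the cup layer using \eqref{proarrow dinatural transformation square naturality} for $\tensor{ε , ι}$, then across the cap layer using the same law for $\tensor{ι , η}$. In between, I use interchange to move $φ$ past the identity crossing disks. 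Lemma \ref{theorem: natural transformation product yang baxter}, applied to the strip factor, moves these crossings past each other.
\item Invertibility of $S m$ is clear, because every constituent is invertible.
\end{itemize}

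The main obstacle I expect is compatibility with proarrow composition \eqref{proarrow dinatural transformation proarrow composition compatibility} for $S(\procomp{m , n})$. Here each of $η(\procomp{m , n})$ and $ε(\procomp{m , n})$ splits, by its own compatibility law, into a pair of disks separated by identity crossings. These must be reassembled into $\comp{S n , S m}$ with the correct ternary regrouping. My plan for this step is a planar isotopy argument on the string diagram. The crossings $\comp{} \, F \tuple*{n , m}$ produced by the splitting commute past the interchangers $\comp{} \tuple*{\tensor{m , η B}}$ by the interchange law. Once that is done, the rest is bookkeeping with the suppressed associators, which I would justify by the coherence theorem cited earlier, working in a strict replacement. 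The nullary case follows in the same way from $η(\procomp{} \, A) ≅ \comp{} \, η A$ and $ε(\procomp{} \, A) ≅ \comp{} \, ε A$.
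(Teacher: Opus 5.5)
Your plan is in substance the paper's proof. You define $S$ and $Z$ exactly as displayed and derive the transformation laws from those of $\eta$, $\varepsilon$, $\iota$ and $\iota^{*}$. Square naturality comes from sliding $\varphi$ around the cup onto the middle sheet (as $\varphi^{*}$) and then around the cap onto the last sheet. Routing this through proposition \ref{theorem: transformation dummy dinatural is natural} only repackages a direct check of the natural-transformation laws, which is what the paper does.

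The rationale in your ``key step'' is wrong, although nothing depends on it.

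\begin{itemize}
\item The cap and cup do not act on disjoint factors of $A \otimes A^{*} \otimes A$. $\eta A$ produces the last two factors and $\varepsilon A$ consumes the first two, so they share the middle $A^{*}$.
\item The displayed $S m$ works precisely because $\varepsilon m \otimes \iota B$ emits $m^{*}$ on that shared factor and $\iota A \otimes \eta m$ absorbs it. The identity crossings are $m$ on the outer factors crossing the folds, not anything in the middle factor.
\item $\iota A \otimes \eta m$ and $\varepsilon m \otimes \iota B$ are off-diagonal components of the unrestricted tensor transformations. The diagonal restrictions you form only supply $\iota m \otimes \eta m$ and $\varepsilon m \otimes \iota m$. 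Composing those hits exactly the obstruction discussed before proposition \ref{theorem: dinatural-natural composition}: you would need a proarrow from $B \otimes A^{*} \otimes B$ to $A \otimes B^{*} \otimes A$.
\item The diagonal square-naturality law of $\varepsilon \otimes \iota$ puts $\varphi$ on the first and third sheets at once. So the slide must use the laws of $\varepsilon$, $\eta$ and $\iota$ one sheet at a time, as the paper does with its ``$\iota , \varepsilon$'' and ``$\eta , \iota$'' steps.
\end{itemize}

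Since you end up defining $S$ directly as displayed and verifying its laws, simply drop the diagonal restriction and the disjointness argument.
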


\begin{definition}[adjoint folds]
	A cap and cup form an \define{adjoint fold} if there are
	\refer[invertible modification]{invertible} \refer[globular modification]{globular} \refer[modification]{modifications}
	$μ : \hom{S}{\procomp{} (\comp{})}$ and $ν : \hom{Z}{\procomp{} (\dual{})}$.
\end{definition}

For each object $A$
the \refer[modification object-component square]{object-component proarrow disks}
$μ A$ and $ν A$ ``smooth out'' a zigzag,
determining a weak adjunction
$\adjoint{\dual{A} , A}$ with unit $η A$ and counit $ε A$.
\[
	\begin{tikzpicture}[string diagram , x = {(24mm , 0mm)} , y = {(0mm , -16mm)} , baseline=(align.base)]
		\coordinate (unit) at (1/4 , 1/4) ;
		\coordinate (counit) at (3/4 , 3/4) ;
		\coordinate (cell) at (3/4 , 1/2) ;
		\draw [name path = cap]
			(0 , 0 |- unit) to [out = east , in = west] node [fromleft] {\tensor{A , η A}}
			($ (cell |- unit) + (-1/6 , 0) $) to [out = east , in = north]
			(cell) ;
		\draw [name path = cup]
			(0 , 0 |- counit) to [out = east , in = west] node [fromleft] {\tensor{ε A , A}}
			($ (cell |- counit) + (-1/6 , 0) $) to [out = east , in = south]
			(cell) ;
		\node [anchor = east] at ($ (cell) + (-1/6 , 0) $) {\tensor{A , \dual{A} , A}} ;
		\node [anchor = west] at ($ (cell) + (1/6 , 0) $) {A} ;
		\node [bead] (align) at (cell) {μ A} ;
	\end{tikzpicture}
	\qquad \text{,} \qquad
	\begin{tikzpicture}[string diagram , x = {(24mm , 0mm)} , y = {(0mm , -16mm)} , baseline=(align.base)]
		\coordinate (unit) at (1/4 , 1/4) ;
		\coordinate (counit) at (3/4 , 3/4) ;
		\coordinate (cell) at (3/4 , 1/2) ;
		\draw [name path = cap]
			(0 , 0 |- unit) to [out = east , in = west] node [fromleft] {\tensor{η A , \dual{A}}}
			($ (cell |- unit) + (-1/6 , 0) $) to [out = east , in = north]
			(cell) ;
		\draw [name path = cup]
			(0 , 0 |- counit) to [out = east , in = west] node [fromleft] {\tensor{\dual{A} , ε A}}
			($ (cell |- counit) + (-1/6 , 0) $) to [out = east , in = south]
			(cell) ;
		\node [anchor = east] at ($ (cell) + (-1/6 , 0) $) {\tensor{\dual{A} , A , \dual{A}}} ;
		\node [anchor = west] at ($ (cell) + (1/6 , 0) $) {\dual{A}} ;
		\node [bead] (align) at (cell) {ν A} ;
	\end{tikzpicture}
\]
These components are natural with respect to
the arrows \eqref{modification arrow naturality}
and proarrows \eqref{modification proarrow naturality}
of $\cat{C}$.
\[
	\begin{tikzpicture}[string diagram , x = {(20mm , 0mm)} , y = {(0mm , -16mm)} , baseline=(align.base)]
		\coordinate (unit) at (1/4 , 1/4) ;
		\coordinate (counit) at (3/4 , 3/4) ;
		\coordinate (cell) at (3/4 , 1/2) ;
		\coordinate (arr) at (1/3 , 1/3) ;
		\draw [name path = cap]
			(0 , 0 |- unit) to [out = east , in = west] node [fromleft] {\tensor{A , η A}}
			($ (cell |- unit) + (-1/6 , 0) $) to [out = east , in = north]
			(cell) ;
		\draw [name path = cup]
			(0 , 0 |- counit) to [out = east , in = west] node [fromleft] {\tensor{ε A , A}}
			($ (cell |- counit) + (-1/6 , 0) $) to [out = east , in = south]
			(cell) ;
		\draw [name path = arrow]
			(arr |- 0, 0) to [out = south , in = north] node [fromabove] {f} node [tobelow] {f}
			(arr |- 1 , 1) ;
		\path [name intersections = {of = arrow and cap , by = {cell1}}] ;
		\path [name intersections = {of = arrow and cup , by = {cell2}}] ;
		\node [bead] at (cell1) {\tensor{f , η f}} ;
		\node [bead] at (cell2) {\tensor{ε f , f}} ;
		\node [bead] (align) at (cell) {μ B} ;
	\end{tikzpicture}
	\, = \;
	\begin{tikzpicture}[string diagram , x = {(16mm , 0mm)} , y = {(0mm , -16mm)} , baseline=(align.base)]
		\coordinate (unit) at (1/4 , 1/4) ;
		\coordinate (counit) at (3/4 , 3/4) ;
		\coordinate (cell) at (1/3 , 1/2) ;
		\coordinate (arr) at (2/3 , 2/3) ;
		\draw [name path = cap]
			(0 , 0 |- unit) to [out = east , in = west] node [fromleft] {\tensor{A , η A}}
			($ (cell |- unit) + (-1/6 , 0) $) to [out = east , in = north]
			(cell) ;
		\draw [name path = cup]
			(0 , 0 |- counit) to [out = east , in = west] node [fromleft] {\tensor{ε A , A}}
			($ (cell |- counit) + (-1/6 , 0) $) to [out = east , in = south]
			(cell) ;
		\draw [name path = arrow]
			(arr |- 0, 0) to [out = south , in = north] node [fromabove] {f} node [tobelow] {f}
			(arr |- 1 , 1) ;
		\node [bead] (align) at (cell) {μ A} ;
	\end{tikzpicture}
	\quad \text{,} \quad
	\begin{tikzpicture}[string diagram , x = {(28mm , 0mm)} , y = {(0mm , -16mm)} , baseline=(align.base)]
		\coordinate (unit) at (6/10 , 1/4) ;
		\coordinate (counit) at (3/10 , 3/4) ;
		\coordinate (cell) at (1 , 1/2) ;
		\coordinate (pro in) at (0 , -1/8) ;
		\coordinate (pro out) at (1 , 9/8) ;
		\draw [name path = cap]
			(0 , 0 |- unit) to [out = east , in = west] node [fromleft] {\tensor{B , η B}}
			($ (cell |- unit) + (-1/6 , 0) $) to [out = east , in = north]
			(cell) ;
		\draw [name path = cup]
			(0 , 0 |- counit) to [out = east , in = west] node [fromleft] {\tensor{ε B , B}}
			($ (cell |- counit) + (-1/6 , 0) $) to [out = east , in = south]
			(cell) ;
		\draw [name path = left] (pro in) to [out = east , in = west] node [fromleft] {m} (counit) ;
		\draw [name path = middle] (counit) to [out = east , in = west] (unit) ;
		\draw [name path = right] (unit) to [out = east , in = west] node [toright] {m} (pro out) ;
		\path [name intersections = {of = left and cap , by = {cap interchanger}}] ;
		\path [name intersections = {of = right and cup , by = {cup interchanger}}] ;
		\node [bead] at (unit) {\tensor{A , η m}} ;
		\node [bead] at (counit) {\tensor{ε m , B}} ;
		\node [bead] at (cap interchanger) {\comp{}} ;
		\node [bead] at (cup interchanger) {\comp{}} ;
		\node [bead] (align) at (cell) {μ A} ;
	\end{tikzpicture}
	\, ≅ \,
	\begin{tikzpicture}[string diagram , x = {(16mm , 0mm)} , y = {(0mm , -16mm)} , baseline=(align.base)]
		\coordinate (unit) at (2/4 , 1/4) ;
		\coordinate (counit) at (1/4 , 3/4) ;
		\coordinate (cell) at (1/3 , 1/2) ;
		\coordinate (pro in) at (0 , -1/8) ;
		\coordinate (pro out) at (1 , 9/8) ;
		\draw [name path = cap]
			(0 , 0 |- unit) to [out = east , in = west] node [fromleft] {\tensor{B , η B}}
			($ (-1/6 , 0) + (cell |- unit) $) to [out = east , in = north]
			(cell) ;
		\draw [name path = cup]
			(0 , 0 |- counit) to [out = east , in = west] node [fromleft] {\tensor{ε B , B}}
			($ (-1/6 , 0) + (cell |- counit) $) to [out = east , in = south]
			(cell) ;
		\draw [name path = pro]
			(pro in) to [out = east , in = west] node [fromleft] {m}
			($ (cell |- pro in) + (-1/12 , 0) $) to [out = east , in = west] node [toright] {m}
			(pro out) ;
		\node [bead] (align) at (cell) {μ B} ;
	\end{tikzpicture}
\]
\[
	\begin{tikzpicture}[string diagram , x = {(20mm , 0mm)} , y = {(0mm , -16mm)} , baseline=(align.base)]
		\coordinate (unit) at (1/4 , 1/4) ;
		\coordinate (counit) at (3/4 , 3/4) ;
		\coordinate (cell) at (3/4 , 1/2) ;
		\coordinate (arr) at (1/3 , 1/3) ;
		\draw [name path = cap]
			(0 , 0 |- unit) to [out = east , in = west] node [fromleft] {\tensor{η A , \dual{A}}}
			($ (cell |- unit) + (-1/6 , 0) $) to [out = east , in = north]
			(cell) ;
		\draw [name path = cup]
			(0 , 0 |- counit) to [out = east , in = west] node [fromleft] {\tensor{\dual{A} , ε A}}
			($ (cell |- counit) + (-1/6 , 0) $) to [out = east , in = south]
			(cell) ;
		\draw [name path = arrow]
			(arr |- 0, 0) to [out = south , in = north] node [fromabove] {\dual{f}} node [tobelow] {\dual{f}}
			(arr |- 1 , 1) ;
		\path [name intersections = {of = arrow and cap , by = {cell1}}] ;
		\path [name intersections = {of = arrow and cup , by = {cell2}}] ;
		\node [bead] at (cell1) {\tensor{η f , \dual{f}}} ;
		\node [bead] at (cell2) {\tensor{\dual{f} , ε f}} ;
		\node [bead] (align) at (cell) {ν B} ;
	\end{tikzpicture}
	\, = \;
	\begin{tikzpicture}[string diagram , x = {(16mm , 0mm)} , y = {(0mm , -16mm)} , baseline=(align.base)]
		\coordinate (unit) at (1/4 , 1/4) ;
		\coordinate (counit) at (3/4 , 3/4) ;
		\coordinate (cell) at (1/3 , 1/2) ;
		\coordinate (arr) at (2/3 , 2/3) ;
		\draw [name path = cap]
			(0 , 0 |- unit) to [out = east , in = west] node [fromleft] {\tensor{η A , \dual{A}}}
			($ (cell |- unit) + (-1/6 , 0) $) to [out = east , in = north]
			(cell) ;
		\draw [name path = cup]
			(0 , 0 |- counit) to [out = east , in = west] node [fromleft] {\tensor{\dual{A} , ε A}}
			($ (cell |- counit) + (-1/6 , 0) $) to [out = east , in = south]
			(cell) ;
		\draw [name path = arrow]
			(arr |- 0, 0) to [out = south , in = north] node [fromabove] {\dual{f}} node [tobelow] {\dual{f}}
			(arr |- 1 , 1) ;
		\node [bead] (align) at (cell) {ν A} ;
	\end{tikzpicture}
	\quad \text{,} \quad
	\begin{tikzpicture}[string diagram , x = {(28mm , 0mm)} , y = {(0mm , -16mm)} , baseline=(align.base)]
		\coordinate (unit) at (3/10 , 1/4) ;
		\coordinate (counit) at (6/10 , 3/4) ;
		\coordinate (cell) at (1 , 1/2) ;
		\coordinate (pro in) at (0 , 9/8) ;
		\coordinate (pro out) at (1 , -1/8) ;
		\draw [name path = cap]
			(0 , 0 |- unit) to [out = east , in = west] node [fromleft] {\tensor{η B , \dual{B}}}
			($ (cell |- unit) + (-1/6 , 0) $) to [out = east , in = north]
			(cell) ;
		\draw [name path = cup]
			(0 , 0 |- counit) to [out = east , in = west] node [fromleft] {\tensor{\dual{B} , ε B}}
			($ (cell |- counit) + (-1/6 , 0) $) to [out = east , in = south]
			(cell) ;
		\draw [name path = left] (pro in) to [out = east , in = west] node [fromleft] {\dual{m}} (unit) ;
		\draw [name path = middle] (unit) to [out = east , in = west] (counit) ;
		\draw [name path = right] (counit) to [out = east , in = west] node [toright] {\dual{m}} (pro out) ;
		\path [name intersections = {of = left and cup , by = {cup interchanger}}] ;
		\path [name intersections = {of = right and cap , by = {cap interchanger}}] ;
		\node [bead] at (unit) {\tensor{η m , \dual{B}}} ;
		\node [bead] at (counit) {\tensor{\dual{A} , ε m}} ;
		\node [bead] at (cap interchanger) {\comp{}} ;
		\node [bead] at (cup interchanger) {\comp{}} ;
		\node [bead] (align) at (cell) {ν A} ;
	\end{tikzpicture}
	\, ≅ \,
	\begin{tikzpicture}[string diagram , x = {(16mm , 0mm)} , y = {(0mm , -16mm)} , baseline=(align.base)]
		\coordinate (unit) at (2/4 , 1/4) ;
		\coordinate (counit) at (1/4 , 3/4) ;
		\coordinate (cell) at (1/3 , 1/2) ;
		\coordinate (pro in) at (0 , 9/8) ;
		\coordinate (pro out) at (1 , -1/8) ;
		\draw [name path = cap]
			(0 , 0 |- unit) to [out = east , in = west] node [fromleft] {\tensor{η B , \dual{B}}}
			($ (-1/6 , 0) + (cell |- unit) $) to [out = east , in = north]
			(cell) ;
		\draw [name path = cup]
			(0 , 0 |- counit) to [out = east , in = west] node [fromleft] {\tensor{\dual{B} , ε B}}
			($ (-1/6 , 0) + (cell |- counit) $) to [out = east , in = south]
			(cell) ;
		\draw [name path = pro]
			(pro in) to [out = east , in = west] node [fromleft] {\dual{m}}
			($ (cell |- pro in) + (-1/12 , 0) $) to [out = east , in = west] node [toright] {\dual{m}}
			(pro out) ;
		\node [bead] (align) at (cell) {ν B} ;
	\end{tikzpicture}
\]

The relevance of this to the topic of extranaturality
is that the cartesian monoidal double category of small $1$-categories,
functors, profunctors, and natural transformations, $\Cat{Cat}$,
together with the monoidal oppositization functor, $\op{} : \hom{\co{\Cat{Cat}}}{\Cat{Cat}}$,
is a setting where we have adjoint folds.
There, the Eilenberg--Kelly graph of an extranatural transformation
is the object component of a composition of \refer[connector]{connectors},
when these are expanded to include braidings as well as caps, cups, and strips.
This also helps explain our choice to make the arrow dimension horizontal:
that way vertical and horizontal compositions of natural transformations between functors of $1$-categories
get rendered vertically and horizontally, respectively.

\paragraph{Related constructions}
The concept of extranaturality for $1$-categories was identified by Eilenberg and Kelly \cite{eilenberg-1966-functorial_calculus}.
Their original presentation was ``monolithic'', in terms of ``(generalized) natural'' families of maps between
functors $\hom{\product{\cat{A} , \cat{B} , \op{\cat{B}}}}{\cat{E}}$
and $\hom{\product{\cat{A} , \op{\cat{C}} , \cat{C}}}{\cat{E}}$, which are
``ordinary natural'' in $\cat{A}$ and ``extraordinary natural'' in $\cat{B}$ and $\cat{C}$.
They also introduced a graph-theoretic condition under which extranatural transformations can be composed.
The \define[Eilenberg-Kelly graph]{Eilenberg--Kelly graphs} of extranatural transformations are typically represented as string diagrams.

Dubuc and Street decomposed Eilenberg and Kelly's generalized naturality into three constituent parts:
ordinary naturality, codomain extranaturality, and domain extranaturality;
and showed that transformations of each sort are instances of dinatural transformations \cite{street-1970-dinatural_transformations}.
Street generalized the concept of dinaturality to the setting of arbitrary monoidal bicategories with duals,
where he observed that Eilenberg--Kelly graphs arise as subdiagrams of string diagram slices
of surface diagrams representing extranatural $2$-cells \cite{street-2003-functorial_calculus}.

Willerton further developed the diagrammatic representation of extranaturality (which he calls ``dinaturality'')
by extending Street's surface diagram representation of extranatural $2$-cells into a full surface diagram calculus
\cite{willerton-2008-diagrammatic_hopf_monads}.
An important difference between Willerton's surface diagrams and those in this section
is that the former represent structures of a monoidal bicategory with duals
in the same way that the surface diagrams in appendix \ref{section: dinaturality_diagrammatics}
represent structures of a locally cubical Gray category \cite{morehouse-2023-cartesian_gray_monoidal_double_categories},
whereas the latter represent structures of a monoidal double category
as the projection string diagrams of appendix \ref{section: dinaturality_diagrammatics},
but with deprojected monoidal product.
With a budget of only three spatial dimensions, here we have chosen to spend them on
the three types of composition in a monoidal double category: $\comp{,}$, $\procomp{,}$, and $\tensor{,}$.

Shulman has proposed compact (closed) double categories as a venue to study extranaturality
\cite{shulman-2010-extraordinary_2multicategories, shulman-2010-extraordinary_multicategories, nlab-compact_double_category}.
In unpublished notes, Baez and Melliès described characterizing $1$-categorical extranaturality
in the double category of small categories \cite{baez-2010-theories_with_dualities}.

%% file: content/parts/double_categories_ctd.tex

(Natural) transformations between parallel functors of double categories come in two dimensions,
corresponding to the \refer[arrow dimension]{arrow} and \refer[proarrow dimension]{proarrow} dimensions of double categories themselves.

\begin{definition}[natural transformation]
	For parallel functors of double categories
	$F , G : \hom{\cat{C}}{\cat{D}}$
	a (proarrow-dimension pseudo) \define{natural transformation}
	$α : \prohom{F}{G}$
	consists of the following data.
	\begin{description}
		\item[{\define*[proarrow natural transformation object-component proarrow]{object-component proarrows}}:]
			for each $\cat{C}$-object
			$A$
			a $\cat{D}$-proarrow
			$α A : \prohom{F A}{G A}$,
		\item[{\define*[proarrow natural transformation arrow-component square]{arrow-component squares}}:]
			for each $\cat{C}$-arrow
			$f : \hom{A}{B}$
			a $\cat{D}$-square
			\[

			\]
		\item[{\define*[proarrow natural transformation proarrow-component disk]{proarrow-component disks}}:]
			for each $\cat{C}$-proarrow
			$m : \prohom{A}{B}$
			an \refer[invertible disk]{invertible} $\cat{D}$-proarrow \refer{disk}
			\[
				%
			\]
	\end{description}
	This data is required to satisfy the following relations.
	\begin{description}
		\item[preservation of arrow composition:]
			for an object
			$A$
			and consecutive arrows
			$f : \hom{A}{B}$ and $g :  \hom{B}{C}$
			of $\cat{C}$
			we have
			\begin{equation} \label{proarrow natural transformation arrow composition preservation}
				α (\comp{} \, A) = \comp{} \, α A
				\quad \text{and} \quad
				α (\comp{f , g}) = \comp{α f , α g}
			\end{equation}
			\[
				%
			\]
		\item[compatibility with proarrow composition:]
			for an object
			$A$
			and consecutive proarrows
			$m : \prohom{A}{B}$ and $n :  \prohom{B}{C}$
			of $\cat{C}$
			we have
			\begin{equation} \label{proarrow natural transformation proarrow composition compatibility}
				α (\procomp{} \, A) ≅ \comp{} \, α A
				\quad \text{and} \quad
				α (\procomp{m , n}) ≅ \comp{(\procomp{F m , α n}) , ({\procomp{α m , G n}})}
			\end{equation}
			\[
				%
			\]
			where we have elided bracketing and coherators in both the equations and the diagrams.
		\item[naturality for squares:]
			for a square
			$φ : \doublehom[\doublehom[]{\prohom{A}{C}}{\prohom{B}{D}}{\hom{A}{B}}{\hom{C}{D}}]{m}{n}{f}{g}$
			of $\cat{C}$
			we have
			\begin{equation} \label{proarrow natural transformation square naturality}
				\comp{(\procomp{F φ , α g}) , α n} = \comp{α m , (\procomp{α f , G φ})}
			\end{equation}
			\[
				%
			\]
	\end{description}
\end{definition}

Natural transformations compose component-wise,
in the sense that for a functor $F$
we have an \define{identity natural transformation} $\procomp{} \, F : \prohom{F}{F}$
with components $(\procomp{} \, F) \, A = \procomp{} \, F A$,
$$
	%
$$

and for consecutive natural transformations $α : \prohom{F}{G}$ and $β : \prohom{G}{H}$
we have a \define{composite natural transformation} $\procomp{α , β} : \prohom{F}{H}$
with components $(\procomp{α , β}) A = \procomp{α A , β A}$,
$$
	%
$$

\begin{remark}[oriented and strict natural transformations]
	In the preceding definition we have stipulated that the proarrow-component disks be \refer[invertible disk]{invertible}.
	If we drop this requirement then we obtain oriented natural transformations.
	In particular, with proarrow-component disks oriented in the direction that we have defined them
	we obtain an \define{oplax natural transformation},
	and for the reverse direction a \define{lax natural transformation}.
	Alternatively, we can strengthen this stipulation to require that the proarrow-component disks be \refer[identity disk]{identities},
	in which case we obtain a \define{strict natural transformation}.
\end{remark}

\begin{remark}[arrow-dimension natural transformations] \label{arrow-dimension natural transformations}
	Diagrammatically, an \define{arrow-dimension natural transformation} $\hom{F}{G}$
	is just the reflection of a \refer[natural transformation]{proarrow-dimension natural transformation} $\prohom{F}{G}$
	about the $\tuple*{\comp{,} , \procomp{,}}$-diagonal.
	However, because of the differing strictness of composition structure in the two dimensions,
	the laws are not exactly algebraically dual once the coherators elided above are explicitly inserted.
\end{remark}

Compatible pairs of transformations in each dimension together determine square-shaped boundaries.
Cells inhabiting these boundaries are known as modifications.

\begin{definition}[modification] \label{definition: modification}
	For parallel functors of double categories
	$F , G , I , J : \hom{\cat{C}}{\cat{D}}$,
	\refer[arrow-dimension natural transformation]{arrow-dimension natural transformations}
	$α : \hom{F}{G}$ and $β : \hom{I}{J}$,
	and \refer[natural transformation]{proarrow-dimension natural transformations}
	$γ : \prohom{F}{I}$ and $δ : \prohom{G}{J}$,
	a (cubical) \define{modification}
	$
		μ : \doublehom[\doublehom[\hom[]{\cat{C}}{\cat{D}}]
		{\prohom{F}{I}}{\prohom{G}{J}}{\hom{F}{G}}{\hom{I}{J}}]
		{γ}{δ}{α}{β}
	$
	consists of the following data.
	\begin{description}
		\item[{\define*[modification object-component square]{object-component squares}}:]
			for each $\cat{C}$-object
			$A$
			a $\cat{D}$-square
			\[
				\begin{tikzpicture}[string diagram , x = {(12mm , 0mm)} , y = {(0mm , -12mm)} , baseline=(align.base)]
					\coordinate (cell) at (1/2 , 1/2) ;
					\draw (0 , 0 |- cell) to node [fromleft] {γ A} node [toright] {δ A} (1 , 1 |- cell) ;
					\draw (cell |- 0 , 0) to node [fromabove] {α A} node [tobelow] {β A} (cell |- 1 , 1) ;
					\node at ($ (cell) + (-1/3 , -1/3) $) {F A} ;
					\node at ($ (cell) + (1/3 , -1/3) $) {G A} ;
					\node at ($ (cell) + (-1/3 , 1/3) $) {I A} ;
					\node at ($ (cell) + (1/3 , 1/3) $) {J A} ;
					\node [bead] (align) at (cell) {μ A} ;
				\end{tikzpicture}
			\]
	\end{description}
	This data is required to satisfy the following relations.
	\begin{description}
		\item[naturality for arrows:]
			for an arrow
			$f : \hom{A}{B}$
			of $\cat{C}$
			we have
			\begin{equation} \label{modification arrow naturality}
				\procomp{α f , (\comp{μ A , δ f})}  ≅  \procomp{(\comp{γ f , μ B}) , β f}
			\end{equation}
			\[
				\begin{tikzpicture}[string diagram , x = {(20mm , 0mm)} , y = {(0mm , -18mm)} , baseline=(align.base)]
					\coordinate (back proarrow in) at (0 , 3/4) ;
					\coordinate (back proarrow out) at (1 , 3/4) ;
					\coordinate (back arrow in) at (3/4 , 0) ;
					\coordinate (back arrow out) at (1/4 , 1) ;
					\coordinate (front arrow in) at (1/4 , 0) ;
					\coordinate (front arrow out) at (3/4 , 1) ;
					\draw [name path = back proarrow]
						(back proarrow in) to [out = east , in = west] node [fromleft] {γ A} node [toright] {δ B}
						(back proarrow out) ;
					\draw [name path =  back arrow]
						(back arrow in) to [out = south , in = north] node [fromabove] {α B}
						(back arrow out |- back proarrow out) to [out = south , in = north] node [tobelow] {β A}
						(back arrow out) ;
					\draw [name path = front arrow]
						(front arrow in) to [out = south , in = north] node [fromabove] {F f}
						(front arrow out |- back proarrow out) to [out = south , in = north] node [tobelow] {J f}
						(front arrow out) ;
					\path [name intersections = {of = back arrow and back proarrow , by = {functor square}}] ;
					\path [name intersections = {of = back proarrow and front arrow , by = {interchanger square}}] ;
					\path [name intersections = {of = back arrow and front arrow , by = {interchanger disk}}] ;
					\node [bead] at (functor square) {μ A} ;
					\node [bead] at (interchanger square) {δ f} ;
					\node [bead] at (interchanger disk) {α f} ;
					\node (align) at (1/2 , 1/2) {} ;
				\end{tikzpicture}
				\, ≅ \,
				\begin{tikzpicture}[string diagram , x = {(20mm , 0mm)} , y = {(0mm , -18mm)} , baseline=(align.base)]
					\coordinate (back proarrow in) at (0 , 1/4) ;
					\coordinate (back proarrow out) at (1 , 1/4) ;
					\coordinate (back arrow in) at (3/4 , 0) ;
					\coordinate (back arrow out) at (1/4 , 1) ;
					\coordinate (front arrow in) at (1/4 , 0) ;
					\coordinate (front arrow out) at (3/4 , 1) ;
					\draw [name path = back proarrow]
						(back proarrow in) to [out = east , in = west] node [fromleft] {γ A} node [toright] {δ B}
						(back proarrow out) ;
					\draw [name path =  back arrow]
						(back arrow in) to [out = south , in = north] node [fromabove] {α B}
						(back arrow in |- back proarrow in) to [out = south , in = north] node [tobelow] {β A}
						(back arrow out) ;
					\draw [name path = front arrow]
						(front arrow in) to [out = south , in = north] node [fromabove] {F f}
						(front arrow in |- back proarrow in) to [out = south , in = north] node [tobelow] {J f}
						(front arrow out) ;
					\path [name intersections = {of = back arrow and back proarrow , by = {functor square}}] ;
					\path [name intersections = {of = back proarrow and front arrow , by = {interchanger square}}] ;
					\path [name intersections = {of = back arrow and front arrow , by = {interchanger disk}}] ;
					\node [bead] at (functor square) {μ B} ;
					\node [bead] at (interchanger square) {γ f} ;
					\node [bead] at (interchanger disk) {β f} ;
					\node (align) at (1/2 , 1/2) {} ;
				\end{tikzpicture}
			\]
		\item[naturality for proarrows:]
			for a proarrow
			$m : \prohom{A}{B}$
			of $\cat{C}$
			we have
			\begin{equation} \label{modification proarrow naturality}
				\comp{γ m , (\procomp{μ A , β m})} = \comp{(\procomp{α m , μ B}) , δ m}
			\end{equation}
			\[
				\begin{tikzpicture}[string diagram , x = {(20mm , 0mm)} , y = {(0mm , -18mm)} , baseline=(align.base)]
					\coordinate (back proarrow in) at (0 , 3/4) ;
					\coordinate (back proarrow out) at (1 , 1/4) ;
					\coordinate (back arrow in) at (3/4 , 0) ;
					\coordinate (back arrow out) at (3/4 , 1) ;
					\coordinate (front proarrow in) at (0 , 1/4) ;
					\coordinate (front proarrow out) at (1 , 3/4) ;
					\draw [name path = back proarrow]
						(back proarrow in) to [out = east , in = west] node [fromleft] {γ B}
						(back arrow in |- back proarrow out) to [out = east , in = west] node [toright] {δ A}
						(back proarrow out) ;
					\draw [name path =  back arrow]
						(back arrow in) to [out = south , in = north] node [fromabove] {α A} node [tobelow] {β B}
						(back arrow out) ;
					\draw [name path = front proarrow]
						(front proarrow in) to [out = east , in = west] node [fromleft] {F m}
						(back arrow out |- front proarrow out) to [out = east , in = west] node [toright] {J m}
						(front proarrow out) ;
					\path [name intersections = {of = back arrow and back proarrow , by = {functor square}}] ;
					\path [name intersections = {of = back arrow and front proarrow , by = {interchanger square}}] ;
					\path [name intersections = {of = back proarrow and front proarrow , by = {interchanger disk}}] ;
					\node [bead] at (functor square) {μ A} ;
					\node [bead] at (interchanger square) {β m} ;
					\node [bead] at (interchanger disk) {γ m} ;
					\node (align) at (1/2 , 1/2) {} ;
				\end{tikzpicture}
				\, = \,
				\begin{tikzpicture}[string diagram , x = {(20mm , 0mm)} , y = {(0mm , -18mm)} , baseline=(align.base)]
					\coordinate (back proarrow in) at (0 , 3/4) ;
					\coordinate (back proarrow out) at (1 , 1/4) ;
					\coordinate (back arrow in) at (1/4 , 0) ;
					\coordinate (back arrow out) at (1/4 , 1) ;
					\coordinate (front proarrow in) at (0 , 1/4) ;
					\coordinate (front proarrow out) at (1 , 3/4) ;
					\draw [name path = back proarrow]
						(back proarrow in) to [out = east , in = west] node [fromleft] {γ B}
						(back arrow out |- back proarrow in) to [out = east , in = west] node [toright] {δ A}
						(back proarrow out) ;
					\draw [name path =  back arrow]
						(back arrow in) to [out = south , in = north] node [fromabove] {α A} node [tobelow] {β B}
						(back arrow out) ;
					\draw [name path = front proarrow]
						(front proarrow in) to [out = east , in = west] node [fromleft] {F m}
						(back arrow in |- front proarrow in) to [out = east , in = west] node [toright] {J m}
						(front proarrow out) ;
					\path [name intersections = {of = back arrow and back proarrow , by = {functor square}}] ;
					\path [name intersections = {of = back arrow and front proarrow , by = {interchanger square}}] ;
					\path [name intersections = {of = back proarrow and front proarrow , by = {interchanger disk}}] ;
					\node [bead] at (functor square) {μ B} ;
					\node [bead] at (interchanger square) {α m} ;
					\node [bead] at (interchanger disk) {δ m} ;
					\node (align) at (1/2 , 1/2) {} ;
				\end{tikzpicture}
			\]
	\end{description}
\end{definition}

A modification is \define[globular modification]{globular}
if its object-component squares are \refer[globular square]{globular}.
A globular modification is \define[invertible modification]{invertible}
if its object-component disks are \refer[invertible disk]{invertible},
and is an \define[identity modification]{identity}
if they are \refer[identity square]{(double) identity squares}.

%% file: content/parts/proofs.tex

In order to keep the diagrams more manageable,
in the following proofs we employ the \refer{interchanger} notation for transformation component squares
described in appendix \ref{section: dinaturality_diagrammatics}.


\begin{proof}[proof of lemma \ref{theorem: natural transformation product yang baxter}] \hypertarget{proof: natural transformation product yang baxter}{}
	By the introduction and cancelation of consecutive inverse component disks and
	naturality for squares \eqref{proarrow natural transformation square naturality} for the identity disk (which is itself an identity)
	we have:
	$$
		\phantom{\; \overset{\eqref{proarrow natural transformation square naturality}}{≅} \;}
		\begin{tikzpicture}[string diagram , x = {(24mm , 0mm)} , y = {(0mm , -16mm)} , baseline=(align.base)]
			\coordinate (falling front proarrow in) at (0 , 1/8) ;
			\coordinate (falling front proarrow out) at (1 , 7/8) ;
			\coordinate (rising front proarrow in) at (0 , 7/8) ;
			\coordinate (rising front proarrow out) at (1 , 1/8) ;
			\coordinate (back proarrow in) at (0 , 1/2) ;
			\coordinate (back proarrow out) at (1 , 1/2) ;
			\draw [name path = back proarrow , overcross]
				(back proarrow in) to [out = east , in = west] node [fromleft] {α \tuple*{X , B}}
				($ (falling front proarrow in) ! 1/2 ! (rising front proarrow out) $) to [out = east , in = west] node [toright] {α \tuple*{Y , A}}
				(back proarrow out) ;
			\draw [name path = falling front proarrow , overcross]
				(falling front proarrow in) to [out = east , out looseness = 0.75 , in looseness = 1.25 , in = west]
					node [fromleft] {F \tuple*{X , m}} node [toright] {G \tuple*{Y , m}}
				(falling front proarrow out) ;
			\draw [name path = rising front proarrow , overcross]
				(rising front proarrow in) to [out = east , out looseness = 1.25 , in looseness = 0.75 , in = west]
					node [fromleft] {G \tuple*{p , B}} node [toright] {F \tuple*{p , A}}
				(rising front proarrow out) ;
			\path [name intersections = {of = falling front proarrow and back proarrow , by = {falling interchanger}}] ;
			\path [name intersections = {of = rising front proarrow and back proarrow , by = {rising interchanger}}] ;
			\path [name intersections = {of = falling front proarrow and rising front proarrow , by = {product interchanger}}] ;
			\node (align) at (back proarrow out) {} ;
		\end{tikzpicture}
		\; ≅ \;
		\begin{tikzpicture}[string diagram , x = {(24mm , 0mm)} , y = {(0mm , -16mm)} , baseline=(align.base)]
			\coordinate (falling front proarrow in) at (0 , 1/8) ;
			\coordinate (falling front proarrow out) at (1 , 7/8) ;
			\coordinate (rising front proarrow in) at (0 , 7/8) ;
			\coordinate (rising front proarrow out) at (1 , 1/8) ;
			\coordinate (back proarrow in) at (0 , 1/2) ;
			\coordinate (back proarrow out) at (1 , 1/2) ;
			\coordinate (peak) at (1/4 , 1) ;
			\draw [name path = back proarrow , overcross]
				(back proarrow in) to [out = east , in = west] node [fromleft] {α \tuple*{X , B}}
				(peak) to [out = east , in = west]
				($ ({peak |- falling front proarrow in}) ! 1/2 ! (rising front proarrow out) $) to [out = east , in = west] node [toright] {α \tuple*{Y , A}}
				(back proarrow out) ;
			\draw [name path = falling front proarrow , overcross]
				(falling front proarrow in) to [out = east , in = west] node [fromleft] {F \tuple*{X , m}}
				(peak |- falling front proarrow in) to [out = east , in = west] node [toright] {G \tuple*{Y , m}}
				(falling front proarrow out) ;
			\draw [name path = rising front proarrow , overcross]
				(rising front proarrow in) to [out = east , in = west] node [fromleft] {G \tuple*{p , B}}
				(peak |- rising front proarrow in) to [out = east , in = west] node [toright] {F \tuple*{p , A}}
				(rising front proarrow out) ;
			\path [name intersections = {of = falling front proarrow and back proarrow , by = {falling interchanger}}] ;
			\path [name intersections = {of = rising front proarrow and back proarrow , by = {rising interchanger}}] ;
			\path [name intersections = {of = falling front proarrow and rising front proarrow , by = {product interchanger}}] ;
			\node (align) at (back proarrow out) {} ;
		\end{tikzpicture}
	$$
	$$
		\; \overset{\eqref{proarrow natural transformation square naturality}}{≅} \;
		\begin{tikzpicture}[string diagram , x = {(24mm , 0mm)} , y = {(0mm , -16mm)} , baseline=(align.base)]
			\coordinate (falling front proarrow in) at (0 , 1/8) ;
			\coordinate (falling front proarrow out) at (1 , 7/8) ;
			\coordinate (rising front proarrow in) at (0 , 7/8) ;
			\coordinate (rising front proarrow out) at (1 , 1/8) ;
			\coordinate (back proarrow in) at (0 , 1/2) ;
			\coordinate (back proarrow out) at (1 , 1/2) ;
			\coordinate (peak) at (3/4 , 0) ;
			\draw [name path = back proarrow , overcross]
				(back proarrow in) to [out = east , in = west] node [fromleft] {α \tuple*{X , B}}
				($ (rising front proarrow in) ! 1/2 ! ({peak |- falling front proarrow out}) $) to [out = east , in = west]
				(peak) to [out = east , in = west] node [toright] {α \tuple*{Y , A}}
				(back proarrow out) ;
			\draw [name path = falling front proarrow , overcross]
				(falling front proarrow in) to [out = east , in = west] node [fromleft] {F \tuple*{X , m}}
				(peak |- falling front proarrow out) to [out = east , in = west] node [toright] {G \tuple*{Y , m}}
				(falling front proarrow out) ;
			\draw [name path = rising front proarrow , overcross]
				(rising front proarrow in) to [out = east , in = west] node [fromleft] {G \tuple*{p , B}}
				(peak |- rising front proarrow out) to [out = east , in = west] node [toright] {F \tuple*{p , A}}
				(rising front proarrow out) ;
			\path [name intersections = {of = falling front proarrow and back proarrow , by = {falling interchanger}}] ;
			\path [name intersections = {of = rising front proarrow and back proarrow , by = {rising interchanger}}] ;
			\path [name intersections = {of = falling front proarrow and rising front proarrow , by = {product interchanger}}] ;
			\node (align) at (back proarrow out) {} ;
		\end{tikzpicture}
		\; ≅ \;
		\begin{tikzpicture}[string diagram , x = {(24mm , 0mm)} , y = {(0mm , -16mm)} , baseline=(align.base)]
			\coordinate (falling front proarrow in) at (0 , 1/8) ;
			\coordinate (falling front proarrow out) at (1 , 7/8) ;
			\coordinate (rising front proarrow in) at (0 , 7/8) ;
			\coordinate (rising front proarrow out) at (1 , 1/8) ;
			\coordinate (back proarrow in) at (0 , 1/2) ;
			\coordinate (back proarrow out) at (1 , 1/2) ;
			\draw [name path = back proarrow , overcross]
				(back proarrow in) to [out = east , in = west] node [fromleft] {α \tuple*{X , B}}
				($ (rising front proarrow in) ! 1/2 ! (falling front proarrow out) $) to [out = east , in = west] node [toright] {α \tuple*{Y , A}}
				(back proarrow out) ;
			\draw [name path = falling front proarrow , overcross]
				(falling front proarrow in) to [out = east , out looseness = 1.25 , in looseness = 0.75 , in = west]
					node [fromleft] {F \tuple*{X , m}} node [toright] {G \tuple*{Y , m}}
				(falling front proarrow out) ;
			\draw [name path = rising front proarrow , overcross]
				(rising front proarrow in) to [out = east , out looseness = 0.75 , in looseness = 1.25 , in = west]
					node [fromleft] {G \tuple*{p , B}} node [toright] {F \tuple*{p , A}}
				(rising front proarrow out) ;
			\path [name intersections = {of = falling front proarrow and back proarrow , by = {falling interchanger}}] ;
			\path [name intersections = {of = rising front proarrow and back proarrow , by = {rising interchanger}}] ;
			\path [name intersections = {of = falling front proarrow and rising front proarrow , by = {product interchanger}}] ;
			\node (align) at (back proarrow out) {} ;
		\end{tikzpicture}
	$$
\end{proof}

\begin{proof}[proof of lemma \ref{theorem: dummy natural transformations}] \hypertarget{proof: dummy natural transformations}{}
	\eqref{proarrow natural transformation arrow composition preservation},
	\eqref{proarrow natural transformation proarrow composition compatibility}, and
	\eqref{proarrow natural transformation square naturality} for $\dummy{α}$
	follow from the analogous property for $α$.
	
	For example,
	in the case of binary \eqref{proarrow natural transformation proarrow composition compatibility} we have:
	$$
		\begin{array}{cl}
			&
			\dummy{α} (\procomp{\tuple*{p , m} , \tuple*{q , n}})
				\; = \;
			\dummy{α} \tuple*{(\procomp{p , q}) , (\procomp{m , n})}
				\; ≔ \;
			α (\procomp{m , n})
			\\
				\overset{\eqref{proarrow natural transformation proarrow composition compatibility}}{≅}
				&
			\comp{(\procomp{F m , α n}) , ({\procomp{α m , G n}})}
				\; ≕ \;
			\comp{(\procomp{\dummy{F} \tuple*{p , m} , \dummy{α} \tuple*{q , n}}) , (\procomp{\dummy{α} \tuple*{p , m} , \dummy{G} \tuple*{q , n}})}
			\\
		\end{array}
	$$
	and in the case of \eqref{proarrow natural transformation square naturality} we have:
	$$
		\begin{tikzpicture}[string diagram , x = {(18mm , 0mm)} , y = {(0mm , -16mm)} , baseline=(align.base)]
			\coordinate (back proarrow in) at (0 , 4/5) ;
			\coordinate (back proarrow out) at (1 , 1/5) ;
			\coordinate (front proarrow in) at (0 , 1/5) ;
			\coordinate (front proarrow out) at (1 , 4/5) ;
			\coordinate (front arrow in) at (2/5 , 0) ;
			\coordinate (front arrow out) at (2/5 , 1) ;
			\draw [name path = back proarrow , overcross]
				(back proarrow in) to [out = east , in = west] node [fromleft] {\dummy{α} \tuple*{Z , C}}
				(front arrow out |- back proarrow in) to [out = east , in = west] node [toright] {\dummy{α} \tuple*{Y , B}}
				(back proarrow out) ;
			\draw [name path = front proarrow , overcross]
				(front proarrow in) to [out = east , in = west] node [fromleft] {\dummy{F} \tuple*{p , m}}
				(front arrow in |- front proarrow in) to [out = east , in = west] node [toright] {\dummy{G} \tuple*{q , n}}
				(front proarrow out) ;
			\draw [name path =  front arrow , overcross]
				(front arrow in) to [out = south , in = north] node [fromabove] {\dummy{F} \tuple*{i , f}} node [tobelow] {\dummy{G} \tuple*{j , g}}
				(front arrow out) ;
			\path [name intersections = {of = front proarrow and front arrow , by = {functor square}}] ;
			\path [name intersections = {of = back proarrow and front arrow , by = {component square}}] ;
			\path [name intersections = {of = back proarrow and front proarrow , by = {component disk}}] ;
			\node [bead] at (functor square) {\dummy{F} \tuple*{ψ , φ}} ;
			\node (align) at (component disk) {} ;
		\end{tikzpicture}
		\, ≔ \,
		\begin{tikzpicture}[string diagram , x = {(16mm , 0mm)} , y = {(0mm , -16mm)} , baseline=(align.base)]
			\coordinate (back proarrow in) at (0 , 4/5) ;
			\coordinate (back proarrow out) at (1 , 1/5) ;
			\coordinate (front proarrow in) at (0 , 1/5) ;
			\coordinate (front proarrow out) at (1 , 4/5) ;
			\coordinate (front arrow in) at (1/4 , 0) ;
			\coordinate (front arrow out) at (1/4 , 1) ;
			\draw [name path = back proarrow , overcross]
				(back proarrow in) to [out = east , in = west] node [fromleft] {α C}
				(front arrow out |- back proarrow in) to [out = east , in = west] node [toright] {α B}
				(back proarrow out) ;
			\draw [name path = front proarrow , overcross]
				(front proarrow in) to [out = east , in = west] node [fromleft] {F m}
				(front arrow in |- front proarrow in) to [out = east , in = west] node [toright] {G n}
				(front proarrow out) ;
			\draw [name path =  front arrow , overcross]
				(front arrow in) to [out = south , in = north] node [fromabove] {F f} node [tobelow] {G g}
				(front arrow out) ;
			\path [name intersections = {of = front proarrow and front arrow , by = {functor square}}] ;
			\path [name intersections = {of = back proarrow and front arrow , by = {component square}}] ;
			\path [name intersections = {of = back proarrow and front proarrow , by = {component disk}}] ;
			\node [bead] at (functor square) {F φ} ;
			\node (align) at (component disk) {} ;
		\end{tikzpicture}
		\, \overset{\eqref{proarrow natural transformation square naturality}}{=} \,
		\begin{tikzpicture}[string diagram , x = {(16mm , 0mm)} , y = {(0mm , -16mm)} , baseline=(align.base)]
			\coordinate (back proarrow in) at (0 , 4/5) ;
			\coordinate (back proarrow out) at (1 , 1/5) ;
			\coordinate (front proarrow in) at (0 , 1/5) ;
			\coordinate (front proarrow out) at (1 , 4/5) ;
			\coordinate (front arrow in) at (3/4 , 0) ;
			\coordinate (front arrow out) at (3/4 , 1) ;
			\draw [name path = back proarrow , overcross]
				(back proarrow in) to [out = east , in = west] node [fromleft] {α C}
				(front arrow in |- back proarrow out) to [out = east , in = west] node [toright] {α B}
				(back proarrow out) ;
			\draw [name path = front proarrow , overcross]
				(front proarrow in) to [out = east , in = west] node [fromleft] {F m}
				(front arrow out |- front proarrow out) to [out = east , in = west] node [toright] {G n}
				(front proarrow out) ;
			\draw [name path =  front arrow , overcross]
				(front arrow in) to [out = south , in = north] node [fromabove] {F f} node [tobelow] {G g}
				(front arrow out) ;
			\path [name intersections = {of = front proarrow and front arrow , by = {functor square}}] ;
			\path [name intersections = {of = back proarrow and front arrow , by = {component square}}] ;
			\path [name intersections = {of = back proarrow and front proarrow , by = {component disk}}] ;
			\node [bead] at (functor square) {G φ} ;
			\node (align) at (component disk) {} ;
		\end{tikzpicture}
		\, ≕ \,
		\begin{tikzpicture}[string diagram , x = {(18mm , 0mm)} , y = {(0mm , -16mm)} , baseline=(align.base)]
			\coordinate (back proarrow in) at (0 , 4/5) ;
			\coordinate (back proarrow out) at (1 , 1/5) ;
			\coordinate (front proarrow in) at (0 , 1/5) ;
			\coordinate (front proarrow out) at (1 , 4/5) ;
			\coordinate (front arrow in) at (3/5 , 0) ;
			\coordinate (front arrow out) at (3/5 , 1) ;
			\draw [name path = back proarrow , overcross]
				(back proarrow in) to [out = east , in = west] node [fromleft] {\dummy{α} \tuple*{Z , C}}
				(front arrow in |- back proarrow out) to [out = east , in = west] node [toright] {\dummy{α} \tuple*{Y , B}}
				(back proarrow out) ;
			\draw [name path = front proarrow , overcross]
				(front proarrow in) to [out = east , in = west] node [fromleft] {\dummy{F} \tuple*{p , m}}
				(front arrow out |- front proarrow out) to [out = east , in = west] node [toright] {\dummy{G} \tuple*{q , n}}
				(front proarrow out) ;
			\draw [name path =  front arrow , overcross]
				(front arrow in) to [out = south , in = north] node [fromabove] {\dummy{F} \tuple*{i , f}} node [tobelow] {\dummy{G} \tuple*{j , g}}
				(front arrow out) ;
			\path [name intersections = {of = front proarrow and front arrow , by = {functor square}}] ;
			\path [name intersections = {of = back proarrow and front arrow , by = {component square}}] ;
			\path [name intersections = {of = back proarrow and front proarrow , by = {component disk}}] ;
			\node [bead] at (functor square) {\dummy{G} \tuple*{ψ , φ}} ;
			\node (align) at (component disk) {} ;
		\end{tikzpicture}
	$$
	Likewise,
	\eqref{modification arrow naturality}, and
	\eqref{modification proarrow naturality} for $\dummy{μ}$
	follow from the analogous property for $μ$.
\end{proof}


\begin{proof}[proof of proposition \ref{theorem: transformation diagonalization}] \hypertarget{proof: transformation diagonalization}{}
	\eqref{proarrow dinatural transformation arrow composition preservation},
	\eqref{proarrow dinatural transformation proarrow composition compatibility}, and
	\eqref{proarrow dinatural transformation square naturality} for $\di{α}$
	follow from their analogues
	\eqref{proarrow natural transformation arrow composition preservation},
	\eqref{proarrow natural transformation proarrow composition compatibility}, and
	\eqref{proarrow natural transformation square naturality} for $α$,
	together with \eqref{natural transformation product yang baxter}.
	The most interesting cases are
	compatibility with binary proarrow composition
	\eqref{proarrow dinatural transformation proarrow composition compatibility}
	and naturality for squares
	\eqref{proarrow dinatural transformation square naturality}.
	
	For binary \eqref{proarrow dinatural transformation proarrow composition compatibility},
	for consecutive proarrows $m : \prohom{A}{B}$ and $n :  \prohom{B}{C}$
	we have:
	$$
		\di{α} \tuple*{\procomp{m , n}}
			\, :≅ \,

			\, \overset{\eqref{proarrow natural transformation proarrow composition compatibility}}{≅} \,
		%
	$$
	$$
			\, \overset{\eqref{natural transformation product yang baxter}}{≅} \,
		%
	$$
	
	For \eqref{proarrow dinatural transformation square naturality},
	for square $φ : \doublehom[(\doublehom{\prohom{A}{C}}{\prohom{B}{D}}{\hom{A}{B}}{\hom{C}{D}})]{m}{n}{f}{g}$
	we have:
	$$
		%
	$$
	$$
		\, \overset{\eqref{proarrow natural transformation square naturality}}{≅} \,
		%
	$$
	$$
		\, \overset{\eqref{proarrow natural transformation square naturality}}{≅} \,
		%
	$$
\end{proof}

\begin{proof}[proof of corollary \ref{theorem: dummy dinatural transformations}] \hypertarget{proof: dummy dinatural transformations}{}
	Simply \refer[transformation diagonalization]{diagonalize} the \refer{dummy natural transformation}
	$\dummy{α} : \prohom[\hom[]{\product{\co{\cat{C}} , \cat{C}}}{\cat{D}}]{\dummy{F}}{\dummy{G}}$.
\end{proof}

\begin{proof}[proof of proposition \ref{theorem: transformation dummy dinatural is natural}] \hypertarget{proof: transformation dummy dinatural is natural}{}
	Note that we can recover $F$ and $G$ from $\dummy{F}$ and $\dummy{G}$
	because $\co{\cat{C}} = \cat{\coproduct{}}$ iff $\cat{C} = \cat{\coproduct{}}$.
	
	Given a dinatural $α : \prohom<dinatural>{\dummy{F}}{\dummy{G}}$
	we define a natural $α′ : \prohom{F}{G}$
	with components $α′ A ≔ α A$ , $α′ f ≔ α f$, and $α′ m :≅ α m$.
	That $′$ is inverse to $\di{\dummy{}}$ is immediate.
	\eqref{proarrow natural transformation arrow composition preservation},
	\eqref{proarrow natural transformation proarrow composition compatibility}, and
	\eqref{proarrow natural transformation square naturality} for $α′$,
	follow from
	\eqref{proarrow dinatural transformation arrow composition preservation},
	\eqref{proarrow dinatural transformation proarrow composition compatibility}, and
	\eqref{proarrow dinatural transformation square naturality} for $α$.
	The most interesting cases are again
	compatibility with binary proarrow composition
	\eqref{proarrow natural transformation proarrow composition compatibility}
	and naturality for squares
	\eqref{proarrow natural transformation square naturality}.

	For binary \eqref{proarrow natural transformation proarrow composition compatibility},
	for consecutive proarrows $m : \prohom{A}{B}$ and $n :  \prohom{B}{C}$
	we have:
	$$
		\begin{tikzpicture}[string diagram , x = {(20mm , 0mm)} , y = {(0mm , -16mm)} , baseline=(align.base)]
			\coordinate (back proarrow in) at (0 , 7/8) ;
			\coordinate (back proarrow out) at (1 , 1/8) ;
			\coordinate (front proarrow 1 in) at (0 , 1/8) ;
			\coordinate (front proarrow 1 out) at (1 , 1/2) ;
			\coordinate (front proarrow 2 in) at (0 , 1/2) ;
			\coordinate (front proarrow 2 out) at (1 , 7/8) ;
			\draw [name path = back proarrow , overcross]
				(back proarrow in) to [out = east , in = west] node [fromleft] {α′ C} node [toright] {α′ A}
				(back proarrow out) ;
			\draw [name path = front proarrow , overcross]
				(front proarrow 1 in) to [out = east , in = west] node [fromleft] {F (\procomp{m , n})} node [toright] {G (\procomp{m , n})}
				(front proarrow 2 out) ;
			\path [name intersections = {of = back proarrow and front proarrow , by = {cell}}] ;
			\node at (cell) (align) {} ;
		\end{tikzpicture}
		\, :≅ \,
		\begin{tikzpicture}[string diagram , x = {(20mm , 0mm)} , y = {(0mm , -16mm)} , baseline=(align.base)]
			\coordinate (back proarrow in) at (0 , 1/2) ;
			\coordinate (back proarrow out) at (1 , 1/2) ;
			\coordinate (falling front proarrow in) at (0 , 1/8) ;
			\coordinate (falling front proarrow out) at (1 , 7/8) ;
			\coordinate (rising front proarrow in) at (0 , 7/8) ;
			\coordinate (rising front proarrow out) at (1 , 1/8) ;
			\draw [name path = back proarrow , overcross]
				(back proarrow in) to [out = east , in = west] node [fromleft] {α C} node [toright] {α A}
				(back proarrow out) ;
			\draw [name path = falling front proarrow , overcross]
				(falling front proarrow in) to [out = east , in = west]
					node [fromleft] {\dummy{F} \tuple*{C , \procomp{m , n}}}
					node [toright] {\dummy{G} \tuple*{A , \procomp{m , n}}}
				(falling front proarrow out) ;
			\draw [name path = rising front proarrow , overcross]
				(rising front proarrow in) to [out = east , in = west]
					node [fromleft] {\dummy{G} \tuple*{\procomp{m , n} , C}}
					node [toright] {\dummy{F} \tuple*{\procomp{m , n} , A}}
				(rising front proarrow out) ;
			\path [name intersections = {of = falling front proarrow and rising front proarrow , by = {cell}}] ;
			\node (align) at (cell) {} ;
		\end{tikzpicture}
	$$
	$$
		\, \overset{\eqref{proarrow dinatural transformation proarrow composition compatibility}}{≅} \,
		\begin{tikzpicture}[string diagram , x = {(32mm , 0mm)} , y = {(0mm , -24mm)} , baseline=(align.base)]
			\coordinate (back proarrow in) at (0 , 1/2) ;
			\coordinate (back proarrow out) at (1 , 1/2) ;
			\coordinate (falling front proarrow in) at (0 , 1/8) ;
			\coordinate (falling front proarrow out) at (1 , 7/8) ;
			\coordinate (rising front proarrow in) at (0 , 7/8) ;
			\coordinate (rising front proarrow out) at (1 , 1/8) ;
			\draw [name path = back proarrow , overcross]
				(back proarrow in) to [out = east , in = west] node [fromleft] {α C} node [toright] {α A}
				(back proarrow out) ;
			\draw [name path = falling front proarrow 1 , overcross]
				($ (falling front proarrow in) + (0 , -1/8) $) to [out = east , out looseness = 1.25 , in looseness = 0.75 , in = west]
					node [fromleft] {\dummy{F} \tuple*{C , m}}
					node [toright] {\dummy{G} \tuple*{A , m}}
				($ (falling front proarrow out) + (0 , -1/8) $) ;
			\draw [name path = falling front proarrow 2]
				($ (falling front proarrow in) + (0 , 1/8) $) to [out = east , out looseness = 0.75 , in looseness = 1.25 , in = west]
					node [fromleft] {\dummy{F} \tuple*{C , n}}
					node [toright] {\dummy{G} \tuple*{A , n}}
				($ (falling front proarrow out) + (0 , 1/8) $) ;
			\draw [name path = rising front proarrow 1 , overcross]
				($ (rising front proarrow in) + (0 , 1/8) $) to [out = east , out looseness = 1.25 , in looseness = 0.75 , in = west]
					node [fromleft] {\dummy{G} \tuple*{m , C}}
					node [toright] {\dummy{F} \tuple*{m , A}}
				($ (rising front proarrow out) + (0 , 1/8) $) ;
			\draw [name path = rising front proarrow 2 , overcross]
				($ (rising front proarrow in) + (0 , -1/8) $) to [out = east , out looseness = 0.75 , in looseness = 1.25 , in = west]
					node [fromleft] {\dummy{G} \tuple*{n , C}}
					node [toright] {\dummy{F} \tuple*{n , A}}
				($ (rising front proarrow out) + (0 , -1/8) $) ;
			\path [name intersections = {of = falling front proarrow 1 and rising front proarrow 1 , by = {interchanger disk 1}}] ;
			\path [name intersections = {of = falling front proarrow 2 and rising front proarrow 2 , by = {interchanger disk 2}}] ;
			\path [name intersections = {of = falling front proarrow 1 and rising front proarrow 2 , by = {eq 1}}] ;
			\path [name intersections = {of = falling front proarrow 2 and rising front proarrow 1 , by = {eq 2}}] ;
			\node (align) at ($ (interchanger disk 1) ! 1/2 ! (interchanger disk 2) $) {} ;
		\end{tikzpicture}
		\, ≅: \,
		\begin{tikzpicture}[string diagram , x = {(24mm , 0mm)} , y = {(0mm , -16mm)} , baseline=(align.base)]
			\coordinate (back proarrow in) at (0 , 7/8) ;
			\coordinate (back proarrow out) at (1 , 1/8) ;
			\coordinate (front proarrow 1 in) at (0 , 1/8) ;
			\coordinate (front proarrow 1 out) at (1 , 1/2) ;
			\coordinate (front proarrow 2 in) at (0 , 1/2) ;
			\coordinate (front proarrow 2 out) at (1 , 7/8) ;
			\draw [name path = back proarrow , overcross]
				(back proarrow in) to [out = east , in = west] node [fromleft] {α′ C} node [toright] {α′ A}
				(back proarrow out) ;
			\draw [name path = front proarrow 1 , overcross]
				(front proarrow 1 in) to [out = east , out looseness = 1.25 , in looseness = 0.75 , in = west] node [fromleft] {F m} node [toright] {G m}
				(front proarrow 1 out) ;
			\draw [name path = front proarrow 2 , overcross]
				(front proarrow 2 in) to [out = east , out looseness = 0.75 , in looseness = 1.25 , in = west] node [fromleft] {F n} node [toright] {G n}
				(front proarrow 2 out) ;
			\path [name intersections = {of = back proarrow and front proarrow 1 , by = {cell 1}}] ;
			\path [name intersections = {of = back proarrow and front proarrow 2 , by = {cell 2}}] ;
			\node (align) at ($ (cell 1) ! 1/2 ! (cell 2) $) {} ;
		\end{tikzpicture}
	$$
	
	For \eqref{proarrow natural transformation square naturality},
	for square $φ : \doublehom[(\doublehom{\prohom{A}{C}}{\prohom{B}{D}}{\hom{A}{B}}{\hom{C}{D}})]{m}{n}{f}{g}$
	we have:
	$$
		\makebox[\textwidth][c]{$ 
		\begin{tikzpicture}[string diagram , x = {(16mm , 0mm)} , y = {(0mm , -16mm)} , baseline=(align.base)]
			\coordinate (back proarrow in) at (0 , 4/5) ;
			\coordinate (back proarrow out) at (1 , 1/5) ;
			\coordinate (front proarrow in) at (0 , 1/5) ;
			\coordinate (front proarrow out) at (1 , 4/5) ;
			\coordinate (front arrow in) at (1/4 , 0) ;
			\coordinate (front arrow out) at (1/4 , 1) ;
			\draw [name path = back proarrow , overcross]
				(back proarrow in) to [out = east , in = west] node [fromleft] {α′ C}
				(front arrow out |- back proarrow in) to [out = east , in = west] node [toright] {α′ B}
				(back proarrow out) ;
			\draw [name path = front proarrow , overcross]
				(front proarrow in) to [out = east , in = west] node [fromleft] {F m}
				(front arrow in |- front proarrow in) to [out = east , in = west] node [toright] {G n}
				(front proarrow out) ;
			\draw [name path =  front arrow , overcross]
				(front arrow in) to [out = south , in = north] node [fromabove] {F f} node [tobelow] {G g}
				(front arrow out) ;
			\path [name intersections = {of = front proarrow and front arrow , by = {functor square}}] ;
			\path [name intersections = {of = back proarrow and front arrow , by = {component square}}] ;
			\path [name intersections = {of = back proarrow and front proarrow , by = {component disk}}] ;
			\node [bead] at (functor square) {F φ} ;
			\node (align) at (component disk) {} ;
		\end{tikzpicture}
		\, :≅ \,
		\begin{tikzpicture}[string diagram , x = {(24mm , 0mm)} , y = {(0mm , -20mm)} , baseline=(align.base)]
			\coordinate (back proarrow in) at (0 , 1/2) ;
			\coordinate (back proarrow out) at (1 , 1/2) ;
			\coordinate (falling front proarrow in) at (0 , 1/6) ;
			\coordinate (falling front proarrow out) at (1 , 5/6) ;
			\coordinate (rising front proarrow in) at (0 , 5/6) ;
			\coordinate (rising front proarrow out) at (1 , 1/6) ;
			\coordinate (front arrow in) at (1/3 , 0) ;
			\coordinate (front arrow out) at (1/3 , 1) ;
			\draw [name path = back proarrow , overcross]
				(back proarrow in) to [out = east , in = west] node [fromleft] {α C} node [toright] {α B}
				(back proarrow out) ;
			\draw [name path = falling front proarrow , overcross]
				(falling front proarrow in) to [out = east , in = west] node [fromleft] {\dummy{F} \tuple*{C , m}}
				(front arrow in |- falling front proarrow in) to [out = east , in = west] node [toright] {\dummy{G} \tuple*{B , n}}
				(falling front proarrow out) ;
			\draw [name path = rising front proarrow , overcross]
				(rising front proarrow in) to [out = east , in = west] node [fromleft] {\dummy{G} \tuple*{m , C}}
				(front arrow out |- rising front proarrow in) to [out = east , in = west] node [toright] {\dummy{F} \tuple*{n , B}}
				(rising front proarrow out) ;
			\draw [name path = front arrow , overcross]
				(front arrow in) to [out = south , in = north] node [fromabove] {\dummy{F} \tuple*{g , f}} node [tobelow] {\dummy{G} \tuple*{f , g}}
				(front arrow out) ;
			\path [name intersections = {of = front arrow and falling front proarrow , by = {falling functor square}}] ;
			\path [name intersections = {of = front arrow and rising front proarrow , by = {rising functor square}}] ;
			\path [name intersections = {of = front arrow and back proarrow , by = {interchanger square}}] ;
			\path [name intersections = {of = falling front proarrow and rising front proarrow , by = {interchanger disk}}] ;
			\node [bead] at (falling functor square) {\dummy{F} \tuple*{g , φ}} ;
			\node [bead] at (rising functor square) {\dummy{G} \tuple*{φ , g}} ;
			\node (align) at (interchanger disk) {} ;
		\end{tikzpicture}
		\, \overset{\eqref{proarrow dinatural transformation square naturality}}{≅} \,
		\begin{tikzpicture}[string diagram , x = {(24mm , 0mm)} , y = {(0mm , -20mm)} , baseline=(align.base)]
			\coordinate (back proarrow in) at (0 , 1/2) ;
			\coordinate (back proarrow out) at (1 , 1/2) ;
			\coordinate (falling front proarrow in) at (0 , 1/6) ;
			\coordinate (falling front proarrow out) at (1 , 5/6) ;
			\coordinate (rising front proarrow in) at (0 , 5/6) ;
			\coordinate (rising front proarrow out) at (1 , 1/6) ;
			\coordinate (front arrow in) at (2/3 , 0) ;
			\coordinate (front arrow out) at (2/3 , 1) ;
			\draw [name path = back proarrow , overcross]
				(back proarrow in) to [out = east , in = west] node [fromleft] {α C} node [toright] {α B}
				(back proarrow out) ;
			\draw [name path = falling front proarrow , overcross]
				(falling front proarrow in) to [out = east , in = west] node [fromleft] {\dummy{F} \tuple*{C , m}}
				(front arrow in |- falling front proarrow out) to [out = east , in = west] node [toright] {\dummy{G} \tuple*{B , n}}
				(falling front proarrow out) ;
			\draw [name path = rising front proarrow , overcross]
				(rising front proarrow in) to [out = east , in = west] node [fromleft] {\dummy{G} \tuple*{m , C}}
				(front arrow out |- rising front proarrow out) to [out = east , in = west] node [toright] {\dummy{F} \tuple*{n , B}}
				(rising front proarrow out) ;
			\draw [name path = front arrow , overcross]
				(front arrow in) to [out = south , in = north] node [fromabove] {\dummy{F} \tuple*{g , f}} node [tobelow] {\dummy{G} \tuple*{f , g}}
				(front arrow out) ;
			\path [name intersections = {of = front arrow and falling front proarrow , by = {falling functor square}}] ;
			\path [name intersections = {of = front arrow and rising front proarrow , by = {rising functor square}}] ;
			\path [name intersections = {of = front arrow and back proarrow , by = {interchanger square}}] ;
			\path [name intersections = {of = falling front proarrow and rising front proarrow , by = {interchanger disk}}] ;
			\node [bead] at (falling functor square) {\dummy{G} \tuple*{f , φ}} ;
			\node [bead] at (rising functor square) {\dummy{F} \tuple*{φ , f}} ;
			\node (align) at (interchanger disk) {} ;
		\end{tikzpicture}
		\, ≅: \,
		\begin{tikzpicture}[string diagram , x = {(16mm , 0mm)} , y = {(0mm , -16mm)} , baseline=(align.base)]
			\coordinate (back proarrow in) at (0 , 4/5) ;
			\coordinate (back proarrow out) at (1 , 1/5) ;
			\coordinate (front proarrow in) at (0 , 1/5) ;
			\coordinate (front proarrow out) at (1 , 4/5) ;
			\coordinate (front arrow in) at (3/4 , 0) ;
			\coordinate (front arrow out) at (3/4 , 1) ;
			\draw [name path = back proarrow , overcross]
				(back proarrow in) to [out = east , in = west] node [fromleft] {α′ C}
				(front arrow in |- back proarrow out) to [out = east , in = west] node [toright] {α′ B}
				(back proarrow out) ;
			\draw [name path = front proarrow , overcross]
				(front proarrow in) to [out = east , in = west] node [fromleft] {F m}
				(front arrow out |- front proarrow out) to [out = east , in = west] node [toright] {G n}
				(front proarrow out) ;
			\draw [name path =  front arrow , overcross]
				(front arrow in) to [out = south , in = north] node [fromabove] {F f} node [tobelow] {G g}
				(front arrow out) ;
			\path [name intersections = {of = front proarrow and front arrow , by = {functor square}}] ;
			\path [name intersections = {of = back proarrow and front arrow , by = {component square}}] ;
			\path [name intersections = {of = back proarrow and front proarrow , by = {component disk}}] ;
			\node [bead] at (functor square) {G φ} ;
			\node (align) at (component disk) {} ;
		\end{tikzpicture}
	$} 
	$$
\end{proof}

\begin{proof}[proof of proposition \ref{theorem: dinatural-natural composition}] \hypertarget{proof: dinatural-natural composition}{}
	\eqref{proarrow dinatural transformation arrow composition preservation},
	\eqref{proarrow dinatural transformation proarrow composition compatibility}, and
	\eqref{proarrow dinatural transformation square naturality}
	for $\procomp{α , β}$
	follow from
	\eqref{proarrow dinatural transformation arrow composition preservation},
	\eqref{proarrow dinatural transformation proarrow composition compatibility}, and
	\eqref{proarrow dinatural transformation square naturality}
	for $α$
	and
	\eqref{proarrow natural transformation arrow composition preservation},
	\eqref{proarrow natural transformation proarrow composition compatibility}, and
	\eqref{proarrow natural transformation square naturality}
	for $β$
	together with \eqref{natural transformation product yang baxter}.
	The most interesting cases are again
	compatibility with binary proarrow composition
	\eqref{proarrow dinatural transformation proarrow composition compatibility}
	and naturality for squares
	\eqref{proarrow dinatural transformation square naturality}.
	
	For binary \eqref{proarrow dinatural transformation proarrow composition compatibility},
	for consecutive proarrows $m : \prohom{A}{B}$ and $n :  \prohom{B}{C}$
	we have:
	$$
		(\procomp{α , β}) \tuple*{\procomp{m , n}}
		\, :≅ \,

			\,\overset
			{
				\eqref{proarrow natural transformation proarrow composition compatibility} ,
				\eqref{proarrow dinatural transformation proarrow composition compatibility}
			}{≅} \,
		%
	$$
	$$
			\, \overset{\eqref{natural transformation product yang baxter}}{≅} \,
		%
	$$
	
	For \eqref{proarrow dinatural transformation square naturality},
	for square $φ : \doublehom[(\doublehom{\prohom{A}{C}}{\prohom{B}{D}}{\hom{A}{B}}{\hom{C}{D}})]{m}{n}{f}{g}$
	we have:
	$$
		%
	$$
	$$
		\, \overset{\eqref{proarrow natural transformation square naturality}}{≅} \,
		%
		\, \overset{\eqref{proarrow dinatural transformation square naturality}}{≅} \,
		%
	$$
	$$
		\, \overset{\eqref{proarrow natural transformation square naturality}}{≅} \,
		%
	$$
\end{proof}

\begin{proof}[proof of proposition \ref{theorem: dinatural-natural composition laws}] \hypertarget{proof: dinatural-natural composition laws}{}
	By direct computation.
\end{proof}

\begin{proof}[proof of proposition \ref{theorem: modification diagonalization}] \hypertarget{proof: modification diagonalization}{}
	Arrow \eqref{dimodification arrow naturality}
	and proarrow \eqref{dimodification proarrow naturality}
	naturality for $\di{μ}$
	follow from their analogues \eqref{modification arrow naturality}
	and \eqref{modification proarrow naturality}
	for $μ$.
	
	For example, for \eqref{dimodification proarrow naturality}, for a proarrow $m : \prohom{A}{B}$ we have:
	$$
		\begin{tikzpicture}[string diagram , x = {(30mm , 0mm)} , y = {(0mm , -18mm)} , baseline=(align.base)]
			\coordinate (falling front proarrow in) at (0 , 1/6) ;
			\coordinate (falling front proarrow out) at (1 , 5/6) ;
			\coordinate (rising front proarrow in) at (0 , 5/6) ;
			\coordinate (rising front proarrow out) at (1 , 1/6) ;
			\coordinate (back proarrow in) at (0 , 1/2) ;
			\coordinate (back proarrow out) at (1 , 1/2) ;
			\coordinate (back arrow in) at (3/4 , 0) ;
			\coordinate (back arrow out) at (3/4 , 1) ;
			\draw [name path =  back arrow]
				(back arrow in) to [out = south , in = north] node [fromabove] {α \tuple*{B , A}} node [tobelow] {β \tuple*{A , B}}
				(back arrow out) ;
			\draw [name path = back proarrow]
				(back proarrow in) to [out = east , in = west] node [fromleft] {\di{γ} B}
				(back arrow in |- back proarrow out) to [out = east , in = west] node [toright] {\di{δ} A}
				(back proarrow out) ;
			\draw [name path = falling front proarrow , overcross]
				(falling front proarrow in) to [out = east , in = west] node [fromleft] {F \tuple*{B , m}}
				(back arrow out |- falling front proarrow out) to [out = east , in = west] node [toright] {J \tuple*{A , m}}
				(falling front proarrow out) ;
			\draw [name path = rising front proarrow , overcross]
				(rising front proarrow in) to [out = east , in = west] node [fromleft] {I \tuple*{m , B}}
				(back arrow out |- rising front proarrow out) to [out = east , in = west] node [toright] {G \tuple*{m , A}}
				(rising front proarrow out) ;
			\path [name intersections = {of = back arrow and back proarrow , by = {functor square}}] ;
			\node [bead] at (functor square) {\di{μ} A} ;
			\node (align) at (1/2 , 1/2) {} ;
		\end{tikzpicture}
		\, :≅ \,
		\begin{tikzpicture}[string diagram , x = {(36mm , 0mm)} , y = {(0mm , -18mm)} , baseline=(align.base)]
			\coordinate (falling front proarrow in) at (0 , 1/6) ;
			\coordinate (falling front proarrow out) at (1 , 5/6) ;
			\coordinate (rising front proarrow in) at (0 , 5/6) ;
			\coordinate (rising front proarrow out) at (1 , 1/6) ;
			\coordinate (back proarrow in) at (0 , 1/2) ;
			\coordinate (back proarrow out) at (1 , 1/2) ;
			\coordinate (back arrow in) at (3/4 , 0) ;
			\coordinate (back arrow out) at (3/4 , 1) ;
			\draw [name path =  back arrow]
				(back arrow in) to [out = south , in = north] node [fromabove] {α \tuple*{B , A}} node [tobelow] {β \tuple*{A , B}}
				(back arrow out) ;
			\draw [name path = back proarrow]
				(back proarrow in) to [out = east , in = west] node [fromleft] {γ \tuple*{B , B}}
				($ (rising front proarrow in) ! 1/2 ! ({back arrow in |- falling front proarrow out}) $) to [out = east , in = west]
				(back arrow in |- back proarrow out) to [out = east , in = west] node [toright] {δ \tuple*{A , A}}
				(back proarrow out) ;
			\draw [name path = falling front proarrow , overcross]
				(falling front proarrow in) to [out = east , in = west] node [fromleft] {F \tuple*{B , m}}
				(back arrow out |- falling front proarrow out) to [out = east , in = west] node [toright] {J \tuple*{A , m}}
				(falling front proarrow out) ;
			\draw [name path = rising front proarrow , overcross]
				(rising front proarrow in) to [out = east , in = west] node [fromleft] {I \tuple*{m , B}}
				(back arrow out |- rising front proarrow out) to [out = east , in = west] node [toright] {G \tuple*{m , A}}
				(rising front proarrow out) ;
			\path [name intersections = {of = back arrow and back proarrow , by = {functor square}}] ;
			\node [bead] at (functor square) {μ \tuple*{A , A}} ;
			\node (align) at (1/2 , 1/2) {} ;
		\end{tikzpicture}
	$$
	$$
		\, \overset{\eqref{modification proarrow naturality}}{≅} \,
		\begin{tikzpicture}[string diagram , x = {(36mm , 0mm)} , y = {(0mm , -18mm)} , baseline=(align.base)]
			\coordinate (falling front proarrow in) at (0 , 1/6) ;
			\coordinate (falling front proarrow out) at (1 , 5/6) ;
			\coordinate (rising front proarrow in) at (0 , 5/6) ;
			\coordinate (rising front proarrow out) at (1 , 1/6) ;
			\coordinate (back proarrow in) at (0 , 1/2) ;
			\coordinate (back proarrow out) at (1 , 1/2) ;
			\coordinate (back arrow in) at (2/3 , 0) ;
			\coordinate (back arrow out) at (2/3 , 1) ;
			\draw [name path =  back arrow]
				(back arrow in) to [out = south , in = north] node [fromabove] {α \tuple*{B , A}} node [tobelow] {β \tuple*{A , B}}
				(back arrow out) ;
			\draw [name path = back proarrow]
				(back proarrow in) to [out = east , out looseness = 0.5 , in looseness = 1.5 , in = west]
					node [fromleft] {γ \tuple*{B , B}}
				($ (rising front proarrow in) ! 1/2 ! (falling front proarrow out) $) to [out = east , in looseness = 0.5 , out looseness = 1.5 , in = west]
					node [toright] {δ \tuple*{A , A}}
				(back proarrow out) ;
			\draw [name path = falling front proarrow , overcross]
				(falling front proarrow in) to [out = east , out looseness = 2.0 , in looseness = 0.25 , in = west]
					node [fromleft] {F \tuple*{B , m}} node [toright] {J \tuple*{A , m}}
				(falling front proarrow out) ;
			\draw [name path = rising front proarrow , overcross]
				(rising front proarrow in) to [out = east , out looseness = 0.25 , in looseness = 2.0 , in = west]
					node [fromleft] {I \tuple*{m , B}} node [toright] {G \tuple*{m , A}}
				(rising front proarrow out) ;
			\path [name intersections = {of = back arrow and back proarrow , by = {functor square}}] ;
			\node [bead] at (functor square) {μ \tuple*{A , B}} ;
			\node (align) at (1/2 , 1/2) {} ;
		\end{tikzpicture}
		\, ≅ \,
		\begin{tikzpicture}[string diagram , x = {(36mm , 0mm)} , y = {(0mm , -18mm)} , baseline=(align.base)]
			\coordinate (falling front proarrow in) at (0 , 1/6) ;
			\coordinate (falling front proarrow out) at (1 , 5/6) ;
			\coordinate (rising front proarrow in) at (0 , 5/6) ;
			\coordinate (rising front proarrow out) at (1 , 1/6) ;
			\coordinate (back proarrow in) at (0 , 1/2) ;
			\coordinate (back proarrow out) at (1 , 1/2) ;
			\coordinate (back arrow in) at (1/3 , 0) ;
			\coordinate (back arrow out) at (1/3 , 1) ;
			\draw [name path =  back arrow]
				(back arrow in) to [out = south , in = north] node [fromabove] {α \tuple*{B , A}} node [tobelow] {β \tuple*{A , B}}
				(back arrow out) ;
			\draw [name path = back proarrow]
				(back proarrow in) to [out = east , out looseness = 0.5 , in looseness = 1.5 , in = west]
					node [fromleft] {γ \tuple*{B , B}}
				($ (rising front proarrow in) ! 1/2 ! (falling front proarrow out) $) to [out = east , in looseness = 0.5 , out looseness = 1.5 , in = west]
					node [toright] {δ \tuple*{A , A}}
				(back proarrow out) ;
			\draw [name path = falling front proarrow , overcross]
				(falling front proarrow in) to [out = east , out looseness = 2.0 , in looseness = 0.25 , in = west]
					node [fromleft] {F \tuple*{B , m}} node [toright] {J \tuple*{A , m}}
				(falling front proarrow out) ;
			\draw [name path = rising front proarrow , overcross]
				(rising front proarrow in) to [out = east , out looseness = 0.25 , in looseness = 2.0 , in = west]
					node [fromleft] {I \tuple*{m , B}} node [toright] {G \tuple*{m , A}}
				(rising front proarrow out) ;
			\path [name intersections = {of = back arrow and back proarrow , by = {functor square}}] ;
			\node [bead] at (functor square) {μ \tuple*{A , B}} ;
			\node (align) at (1/2 , 1/2) {} ;
		\end{tikzpicture}
	$$
	$$
		\, \overset{\eqref{modification proarrow naturality}}{≅} \,
		\begin{tikzpicture}[string diagram , x = {(36mm , 0mm)} , y = {(0mm , -18mm)} , baseline=(align.base)]
			\coordinate (falling front proarrow in) at (0 , 1/6) ;
			\coordinate (falling front proarrow out) at (1 , 5/6) ;
			\coordinate (rising front proarrow in) at (0 , 5/6) ;
			\coordinate (rising front proarrow out) at (1 , 1/6) ;
			\coordinate (back proarrow in) at (0 , 1/2) ;
			\coordinate (back proarrow out) at (1 , 1/2) ;
			\coordinate (back arrow in) at (1/4 , 0) ;
			\coordinate (back arrow out) at (1/4 , 1) ;
			\draw [name path =  back arrow]
				(back arrow in) to [out = south , in = north] node [fromabove] {α \tuple*{B , A}} node [tobelow] {β \tuple*{A , B}}
				(back arrow out) ;
			\draw [name path = back proarrow]
				(back proarrow in) to [out = east , in = west] node [fromleft] {γ \tuple*{B , B}}
				(back arrow out |- back proarrow in) to [out = east , in = west]
				($ ({back arrow in |- rising front proarrow in}) ! 1/2 ! (falling front proarrow out) $) to [out = east , in = west] node [toright] {δ \tuple*{A , A}}
				(back proarrow out) ;
			\draw [name path = falling front proarrow , overcross]
				(falling front proarrow in) to [out = east , in = west] node [fromleft] {F \tuple*{B , m}}
				(back arrow in |- falling front proarrow in) to [out = east , in = west] node [toright] {J \tuple*{A , m}}
				(falling front proarrow out) ;
			\draw [name path = rising front proarrow , overcross]
				(rising front proarrow in) to [out = east , in = west] node [fromleft] {I \tuple*{m , B}}
				(back arrow in |- rising front proarrow in) to [out = east , in = west] node [toright] {G \tuple*{m , A}}
				(rising front proarrow out) ;
			\path [name intersections = {of = back arrow and back proarrow , by = {functor square}}] ;
			\node [bead] at (functor square) {μ \tuple*{B , B}} ;
			\node (align) at (1/2 , 1/2) {} ;
		\end{tikzpicture}
		\, :≅ \,
		\begin{tikzpicture}[string diagram , x = {(30mm , 0mm)} , y = {(0mm , -18mm)} , baseline=(align.base)]
			\coordinate (falling front proarrow in) at (0 , 1/6) ;
			\coordinate (falling front proarrow out) at (1 , 5/6) ;
			\coordinate (rising front proarrow in) at (0 , 5/6) ;
			\coordinate (rising front proarrow out) at (1 , 1/6) ;
			\coordinate (back proarrow in) at (0 , 1/2) ;
			\coordinate (back proarrow out) at (1 , 1/2) ;
			\coordinate (back arrow in) at (1/4 , 0) ;
			\coordinate (back arrow out) at (1/4 , 1) ;
			\draw [name path =  back arrow]
				(back arrow in) to [out = south , in = north] node [fromabove] {α \tuple*{B , A}} node [tobelow] {β \tuple*{A , B}}
				(back arrow out) ;
			\draw [name path = back proarrow]
				(back proarrow in) to [out = east , in = west] node [fromleft] {\di{γ} B}
				(back arrow out |- back proarrow in) to [out = east , in = west] node [toright] {\di{δ} A}
				(back proarrow out) ;
			\draw [name path = falling front proarrow , overcross]
				(falling front proarrow in) to [out = east , in = west] node [fromleft] {F \tuple*{B , m}}
				(back arrow in |- falling front proarrow in) to [out = east , in = west] node [toright] {J \tuple*{A , m}}
				(falling front proarrow out) ;
			\draw [name path = rising front proarrow , overcross]
				(rising front proarrow in) to [out = east , in = west] node [fromleft] {I \tuple*{m , B}}
				(back arrow in |- rising front proarrow in) to [out = east , in = west] node [toright] {G \tuple*{m , A}}
				(rising front proarrow out) ;
			\path [name intersections = {of = back arrow and back proarrow , by = {functor square}}] ;
			\node [bead] at (functor square) {\di{μ} B} ;
			\node (align) at (1/2 , 1/2) {} ;
		\end{tikzpicture}
	$$
\end{proof}


\begin{proof}[proof of proposition \ref{theorem: zigzag natural transformations}] \hypertarget{proof: zigzag natural transformations}{}
	\eqref{proarrow natural transformation arrow composition preservation},
	\eqref{proarrow natural transformation proarrow composition compatibility}, and
	\eqref{proarrow natural transformation square naturality}
	 for $S$ and $Z$
	follow from
	\eqref{proarrow dinatural transformation arrow composition preservation},
	\eqref{proarrow dinatural transformation proarrow composition compatibility}, and
	 \eqref{proarrow dinatural transformation square naturality}
	  for $η$, $ε$, $ι$, and $\dual{ι}$.
	For example, square naturality \eqref{proarrow natural transformation square naturality} for $S$
	follows from square naturality \eqref{proarrow dinatural transformation square naturality}
	for the sequence of dinatural transformations ${ι , ε , η , ι}$.
	These let us ``slide'' a square $φ : \doublehom{m}{n}{f}{g}$
	across a zigzag as follows.
	$$
		\makebox[\textwidth][c]{$ 
		\begin{tikzpicture}[string diagram , x = {(30mm , 9mm)} , y = {(0mm , -30mm)} , z = {(24mm , 0mm)} , baseline=(align.base)]
			\coordinate (sheet) at (1/2 , 1/2 , 0) ;
			\coordinate (left top) at ($ (sheet) + (-1/2 , -1/2 , -1/2) $) ;
			\coordinate (right top) at ($ (sheet) + (1/2 , -1/2 , -1/2) $) ;
			\coordinate (left cup) at ($ (sheet) + (-1/2 , 1/4 , -1/4) $) ;
			\coordinate (right cup) at ($ (sheet) + (1/2 , 1/4 , -1/4) $) ;
			\coordinate (left cap) at ($ (sheet) + (-1/2 , -1/4 , 1/4) $) ;
			\coordinate (right cap) at ($ (sheet) + (1/2 , -1/4 , 1/4) $) ;
			\coordinate (left bot) at ($ (sheet) + (-1/2 , 1/2 , 1/2) $) ;
			\coordinate (right bot) at ($ (sheet) + (1/2 , 1/2 , 1/2) $) ;
			\draw [sheet]
				(left top) to coordinate [pos = 1/8] (top)
				(right top) .. controls +(0 , 1/3 , 0) and +(0 , 0 , -1/3) ..
				(right cup) to coordinate [pos = 2/3] (right) coordinate [pos = 7/8] (bot)
				(left cup) .. controls +(0 , 0 , -1/3) and +(0 , 1/3 , 0) .. coordinate [pos = 7/8] (left)
				cycle ;
			\draw [on sheet , name path = arr] (top) .. controls +(0 , 1/3 , 0) and +(0 , 0 , -1/3) .. node [fromabove] {f} (bot) ;
			\draw [on sheet , name path = pro] (left) .. controls +(1/3 , 0 , 0) and +(-1/6 , 0 , -1/3) .. node [fromleft] {m} (right) ;
			\path [name intersections = {of = arr and pro , by = {cell}}] ;
			\node [bead] at (cell) {φ} ;
			\draw [sheet]
				(left cup) to coordinate [pos = 1/8] (top) coordinate [pos = 1/3] (left)
				(right cup) .. controls +(0 , 0 , 1/3) and +(0 , 0 , -1/3) ..
				(right cap) to coordinate [pos = 1/3] (right) coordinate [pos = 7/8] (bot)
				(left cap) .. controls +(0 , 0 , -1/3) and +(0 , 0 , 1/3) ..
				cycle ;
			\draw [on sheet , name path = arr] (top) .. controls +(0 , 0 , 1/3) and +(0 , 0 , -1/3) .. (bot) ;
			\draw [on sheet , name path = pro] (left) .. controls +(1/6 , 0 , 1/3) and +(-1/6 , 0 , -1/3) .. (right) ;
			\draw [sheet]
				(left cap) to coordinate [pos = 1/8] (top) coordinate [pos = 2/3] (left)
				(right cap) .. controls +(0 , 0 , 1/3) and +(0 , -1/3 , 0) .. coordinate [pos = 7/8] (right)
				(right bot) to coordinate [pos = 7/8] (bot)
				(left bot) .. controls +(0 , -1/3 , 0) and +(0 , 0 , 1/3) ..
				cycle ;
			\draw [on sheet , name path = arr] (top) .. controls +(0 , 0 , 1/3) and +(0 , -1/3 , 0) .. node [tobelow] {g} (bot) ;
			\draw [on sheet , name path = pro] (left) .. controls +(1/6 , 0 , 1/3) and +(-1/3 , 0 , 0) .. node [toright] {n} (right) ;
			\node (align) at ($ (sheet) + (0 , 0 , 0) $) {} ;
		\end{tikzpicture}
		\hspace{-2mm} \overset{ι , ε}{≅} \hspace{-2mm}
		\begin{tikzpicture}[string diagram , x = {(30mm , 9mm)} , y = {(0mm , -30mm)} , z = {(24mm , 0mm)} , baseline=(align.base)]
			\coordinate (sheet) at (1/2 , 1/2 , 0) ;
			\coordinate (left top) at ($ (sheet) + (-1/2 , -1/2 , -1/2) $) ;
			\coordinate (right top) at ($ (sheet) + (1/2 , -1/2 , -1/2) $) ;
			\coordinate (left cup) at ($ (sheet) + (-1/2 , 1/4 , -1/4) $) ;
			\coordinate (right cup) at ($ (sheet) + (1/2 , 1/4 , -1/4) $) ;
			\coordinate (left cap) at ($ (sheet) + (-1/2 , -1/4 , 1/4) $) ;
			\coordinate (right cap) at ($ (sheet) + (1/2 , -1/4 , 1/4) $) ;
			\coordinate (left bot) at ($ (sheet) + (-1/2 , 1/2 , 1/2) $) ;
			\coordinate (right bot) at ($ (sheet) + (1/2 , 1/2 , 1/2) $) ;
			\draw [sheet]
				(left top) to coordinate [pos = 1/2] (top)
				(right top) .. controls +(0 , 1/3 , 0) and +(0 , 0 , -1/3) ..
				(right cup) to coordinate [pos = 2/3] (right) coordinate [pos = 1/2] (bot)
				(left cup) .. controls +(0 , 0 , -1/3) and +(0 , 1/3 , 0) .. coordinate [pos = 7/8] (left)
				cycle ;
			\draw [on sheet , name path = arr] (top) .. controls +(0 , 1/3 , 0) and +(0 , 0 , -1/3) .. node [fromabove] {f} (bot) ;
			\draw [on sheet , name path = pro] (left) .. controls +(1/3 , 0 , 0) and +(-1/6 , 0 , -1/3) .. node [fromleft] {m} (right) ;
			\draw [sheet]
				(left cup) to coordinate [pos = 1/3] (left) coordinate [pos = 1/2] (top)
				(right cup) .. controls +(0 , 0 , 1/3) and +(0 , 0 , -1/3) ..
				(right cap) to coordinate [pos = 1/3] (right) coordinate [pos = 1/2] (bot)
				(left cap) .. controls +(0 , 0 , -1/3) and +(0 , 0 , 1/3) ..
				cycle ;
			\draw [on sheet , name path = arr] (top) .. controls +(0 , 0 , 1/3) and +(0 , 0 , -1/3) .. (bot) ;
			\draw [on sheet , name path = pro] (left) .. controls +(1/6 , 0 , 1/3) and +(-1/6 , 0 , -1/3) .. (right) ;
			\path [name intersections = {of = arr and pro , by = {cell}}] ;
			\node [bead] at (cell) {\dual{φ}} ;
			\draw [sheet]
				(left cap) to coordinate [pos = 1/2] (top) coordinate [pos = 2/3] (left)
				(right cap) .. controls +(0 , 0 , 1/3) and +(0 , -1/3 , 0) .. coordinate [pos = 7/8] (right)
				(right bot) to coordinate [pos = 1/2] (bot)
				(left bot) .. controls +(0 , -1/3 , 0) and +(0 , 0 , 1/3) ..
				cycle ;
			\draw [on sheet , name path = arr] (top) .. controls +(0 , 0 , 1/3) and +(0 , -1/3 , 0) .. node [tobelow] {g} (bot) ;
			\draw [on sheet , name path = pro] (left) .. controls +(1/6 , 0 , 1/3) and +(-1/3 , 0 , 0) .. node [toright] {n} (right) ;
			\node (align) at ($ (sheet) + (0 , 0 , 0) $) {} ;
		\end{tikzpicture}
		\hspace{-2mm} \overset{η , ι}{≅} \hspace{-2mm}
		\begin{tikzpicture}[string diagram , x = {(30mm , 9mm)} , y = {(0mm , -30mm)} , z = {(24mm , 0mm)} , baseline=(align.base)]
			\coordinate (sheet) at (1/2 , 1/2 , 0) ;
			\coordinate (left top) at ($ (sheet) + (-1/2 , -1/2 , -1/2) $) ;
			\coordinate (right top) at ($ (sheet) + (1/2 , -1/2 , -1/2) $) ;
			\coordinate (left cup) at ($ (sheet) + (-1/2 , 1/4 , -1/4) $) ;
			\coordinate (right cup) at ($ (sheet) + (1/2 , 1/4 , -1/4) $) ;
			\coordinate (left cap) at ($ (sheet) + (-1/2 , -1/4 , 1/4) $) ;
			\coordinate (right cap) at ($ (sheet) + (1/2 , -1/4 , 1/4) $) ;
			\coordinate (left bot) at ($ (sheet) + (-1/2 , 1/2 , 1/2) $) ;
			\coordinate (right bot) at ($ (sheet) + (1/2 , 1/2 , 1/2) $) ;
			\draw [sheet]
				(left top) to coordinate [pos = 7/8] (top)
				(right top) .. controls +(0 , 1/3 , 0) and +(0 , 0 , -1/3) ..
				(right cup) to coordinate [pos = 2/3] (right) coordinate [pos = 1/8] (bot)
				(left cup) .. controls +(0 , 0 , -1/3) and +(0 , 1/3 , 0) .. coordinate [pos = 7/8] (left)
				cycle ;
			\draw [on sheet , name path = arr] (top) .. controls +(0 , 1/3 , 0) and +(0 , 0 , -1/3) .. node [fromabove] {f} (bot) ;
			\draw [on sheet , name path = pro] (left) .. controls +(1/3 , 0 , 0) and +(-1/6 , 0 , -1/3) .. node [fromleft] {m} (right) ;
			\draw [sheet]
				(left cup) to coordinate [pos = 1/3] (left) coordinate [pos = 7/8] (top)
				(right cup) .. controls +(0 , 0 , 1/3) and +(0 , 0 , -1/3) ..
				(right cap) to coordinate [pos = 1/3] (right) coordinate [pos = 1/8] (bot)
				(left cap) .. controls +(0 , 0 , -1/3) and +(0 , 0 , 1/3) ..
				cycle ;
			\draw [on sheet , name path = arr] (top) .. controls +(0 , 0 , 1/3) and +(0 , 0 , -1/3) .. (bot) ;
			\draw [on sheet , name path = pro] (left) .. controls +(1/6 , 0 , 1/3) and +(-1/6 , 0 , -1/3) .. (right) ;
			\draw [sheet]
				(left cap) to coordinate [pos = 2/3] (left) coordinate [pos = 7/8] (top)
				(right cap) .. controls +(0 , 0 , 1/3) and +(0 , -1/3 , 0) .. coordinate [pos = 7/8] (right)
				(right bot) to coordinate [pos = 1/8] (bot)
				(left bot) .. controls +(0 , -1/3 , 0) and +(0 , 0 , 1/3) ..
				cycle ;
			\draw [on sheet , name path = arr] (top) .. controls +(0 , 0 , 1/3) and +(0 , -1/3 , 0) .. node [tobelow] {g} (bot) ;
			\draw [on sheet , name path = pro] (left) .. controls +(1/6 , 0 , 1/3) and +(-1/3 , 0 , 0) .. node [toright] {n} (right) ;
			\path [name intersections = {of = arr and pro , by = {cell}}] ;
			\node [bead] at (cell) {φ} ;
			\node (align) at ($ (sheet) + (0 , 0 , 0) $) {} ;
		\end{tikzpicture}
		$} 
	$$
\end{proof}